\newtheorem{theorem}{\sc Theorem}[section]
\newtheorem{proposition}[theorem]{\sc Proposition}
\newtheorem{lemma}[theorem]{\sc Lemma}
\newtheorem{corollary}[theorem]{\sc Corollary}
\theoremstyle{definition}
\newtheorem{definition}[theorem]{\sc Definition}
\newtheorem{example}[theorem]{\sc Example}
\theoremstyle{remark}
\newtheorem{remark}[theorem]{\sc Remark}
\newcommand{\Sf}[1]{\mathsf{#1}}
\newcommand{\bd}[1]{\boldsymbol{#1}}
 \newcommand{\id}{\mathrm{Id}}
\newcommand{\Go}{\cG_{\Sscript{0}}}
\newcommand{\Ga}{\cG_{\Sscript{1}}}
\newcommand{\Ad}[1]{{\bd{ad}}_{\Sscript{g}}}
\newcommand{\bB}{\mathscr{B}}
\newcommand{\cC}{\mathscr{C}}
\newcommand{\gG}{\mathscr{G}}
\newcommand{\lL}{\mathscr{L}}
\newcommand{\cA}{{\mathcal A}}
\newcommand{\cB}{{\mathcal B}}
\newcommand{\cD}{{\mathcal D}}
\newcommand{\cG}{{\mathcal G}}
\newcommand{\cI}{{\mathcal I}}
\newcommand{\cX}{{\mathcal X}}
\newcommand{\cY}{{\mathcal Y}}
\newcommand{\Sscript}[1]{\scriptscriptstyle{#1}}
\DeclareMathOperator{\Id}{Id}
\DeclareMathOperator{\pr}{pr}
\DeclareMathOperator{\Orb}{Orb}
\DeclareMathOperator{\rep}{rep}
\DeclareMathOperator{\Ske}{Sk}
\DeclareMathOperator{\dCat}{d}
\DeclareMathOperator{\ndCat}{nd}
\DeclareRobustCommand{\Cat}[1]{\mathcal{#1}}
\DeclareRobustCommand{\Rset}[1]{\emph{Sets}\text{-}{#1}}
\DeclareRobustCommand{\rset}[1]{\emph{sets}\text{-}{#1}}
\DeclareRobustCommand{\Uplus}{\mathbin{\scalebox{1.50}{\ensuremath{\uplus}}}}
\DeclareRobustCommand{\fpro}[1]{\underset{#1}{\times}}
\DeclareRobustCommand{\Xob}{\mathcal{X}\sped{0}}
\DeclareRobustCommand{\Xmo}{\mathcal{X}\sped{1}}
\DeclareRobustCommand{\Xmoi}{\mathcal{X}\sped{i}}
\DeclareRobustCommand{\Xmod}{\mathcal{X}\sped{2}}
\DeclareRobustCommand{\Xmot}{\mathcal{X}\sped{3}}
\DeclareRobustCommand{\Xset}[1]{\left(\mathcal{X},{#1}\right)}
\DeclareRobustCommand{\Yob}{\mathcal{Y}\sped{0}}
\DeclareRobustCommand{\Ymo}{\mathcal{Y}\sped{1}}
\DeclareRobustCommand{\Ymoi}{\mathcal{Y}\sped{i}}
\DeclareRobustCommand{\Ymod}{\mathcal{Y}\sped{2}}
\DeclareRobustCommand{\Yset}[1]{\left(\mathcal{Y},{#1}\right)}
\DeclareRobustCommand{\CRset}[1]{\mathsf{CSets}\text{-}{#1}}
\DeclareRobustCommand{\fCRset}[1]{\mathsf{csets}\text{-}{#1}}
\DeclareRobustCommand{\SInc}[1]{\mathcal{I}\text{-}{#1}}
\DeclareRobustCommand{\Aob}{\mathcal{A}\sped{0}}
\DeclareRobustCommand{\Amo}{\mathcal{A}\sped{1}}
\DeclareRobustCommand{\Amoi}{\mathcal{A}\sped{i}}
\DeclareRobustCommand{\Aset}[1]{\left(\mathcal{A},{#1}\right)}
\DeclareRobustCommand{\Bob}{\mathcal{B}\sped{0}}
\DeclareRobustCommand{\Bmo}{\mathcal{B}\sped{1}}
\DeclareRobustCommand{\Bset}[1]{\left(\mathcal{B},{#1}\right)}
\DeclareRobustCommand{\wea}{\sim\ped{\mathsf{we}}}
\DeclareRobustCommand{\imp}[1]{\textit{#1}}
\DeclareRobustCommand{\one}{1}
\DeclareRobustCommand{\siL}{\mathscr{L}_{\textsf{C}}}
\DeclareRobustCommand{\siB}{\mathscr{B}_{\textsf{C}}}
\DeclareRobustCommand{\siIL}{\mathscr{L}_{\mathcal{I}}}
\DeclareRobustCommand{\siIB}{\mathscr{B}_{\mathcal{I}}}
\DeclareRobustCommand{\id}[1]{\Id_{#1}}
\DeclareRobustCommand{\ssstyle}[1]{{#1}}
\DeclareRobustCommand{\pdue}[3]{ \, {{}_{ {#2} } {#1}_{ {#3} }\,}}
\DeclareRobustCommand{\sped}[1]{_{ {#1} }}
\DeclareRobustCommand{\fpro}[1]{\underset{#1}{\times}}
\DeclareMathOperator{\Grps}{\normalfont{\textbf{Grpd}}}
\DeclareRobustCommand{\riG}{\normalfont{\textbf{Rig}}}
\DeclareRobustCommand{\cring}{\normalfont{\textbf{CRing}}}
\DeclareRobustCommand{\Cat}[1]{\mathcal{#1}}
\DeclareRobustCommand{\Rset}[1]{\mathsf{Sets}\text{-} \mathcal{#1}}
\DeclareRobustCommand{\rset}[1]{\mathsf{Sets}\text{-} {#1}}
\DeclareRobustCommand{\Uplus}{\mathbin{\scalebox{1.50}{\ensuremath{\uplus}}}}
\DeclareRobustCommand{\Xob}{\mathcal{X}\sped{0}}
\DeclareRobustCommand{\Xmo}{\mathcal{X}\sped{1}}
\DeclareRobustCommand{\Xmoi}{\mathcal{X}\sped{i}}
\DeclareRobustCommand{\Xmod}{\mathcal{X}\sped{2}}
\DeclareRobustCommand{\Xmot}{\mathcal{X}\sped{3}}
\DeclareRobustCommand{\Xset}[1]{\left(\mathcal{X},{#1}\right)}
\DeclareRobustCommand{\Yob}{\mathcal{Y}\sped{0}}
\DeclareRobustCommand{\Ymo}{\mathcal{Y}\sped{1}}
\DeclareRobustCommand{\Ymoi}{\mathcal{Y}\sped{i}}
\DeclareRobustCommand{\Ymod}{\mathcal{Y}\sped{2}}
\DeclareRobustCommand{\Yset}[1]{\left(\mathcal{Y},{#1}\right)}
\DeclareRobustCommand{\SInc}[1]{\mathcal{I}\text{-} \mathcal{#1}}
\DeclareRobustCommand{\Aob}{\mathcal{A}\sped{0}}
\DeclareRobustCommand{\Amo}{\mathcal{A}\sped{1}}
\DeclareRobustCommand{\Amoi}{\mathcal{A}\sped{i}}
\DeclareRobustCommand{\Aset}[1]{\left(\mathcal{A},{#1}\right)}
\DeclareRobustCommand{\Bob}{\mathcal{B}\sped{0}}
\DeclareRobustCommand{\Bmo}{\mathcal{B}\sped{1}}
\DeclareRobustCommand{\Bset}[1]{\left(\mathcal{B},{#1}\right)}
\DeclareRobustCommand{\wea}{\sim_{\mathsf{we}}}
\DeclareRobustCommand{\imp}[1]{\textit{#1}}
\DeclareRobustCommand{\ped}[1]{_{#1}}
\DeclareRobustCommand{\one}{1}
\DeclareRobustCommand{\sqre}{\underline{\mathsf{sq}}}
\DeclareRobustCommand{\Sqre}{\underline{\mathsf{Sq}}}
\DeclareRobustCommand{\SqOrb}[1]{\Orb_{\underline{\mathsf{Sq}}}\left({#1}\right)}
\DeclareRobustCommand{\SqRep}[1]{\rep_{\underline{\mathsf{Sq}}}\left({#1}\right)}
\begin{document}

\title{Categorified groupoid-sets and  their Burnside ring}

\author{Laiachi El Kaoutit}
\address{Universidad de Granada, Departamento de \'{A}lgebra and IEMath-Granada. Facultad de Educaci\'{o}n, Econon\'ia y Tecnolog\'ia de Ceuta. Cortadura del Valle, s/n. E-51001 Ceuta, Spain}
\email{kaoutit@ugr.es}
\urladdr{http://www.ugr.es/~kaoutit/}

\author{Leonardo Spinosa}
\address{University of Ferrara, Department of Mathematics and Computer Science\newline
Via Machiavelli 30, Ferrara, I-44121, Italy}
\email{leonardo.spinosa@unife.it}
\urladdr{https://orcid.org/0000-0003-3220-6479}

\date{\today}
\subjclass[2010]{Primary 18D20, 18D10, 19A22; Secondary 18D05, 20L05, 18B40, 18G30}

\begin{abstract}
We explore the category of internal categories in the usual category of (right) group-sets, whose objects   are referred  to as \emph{categorified group-sets}.
More precisely, we develop a new Burnside theory, where the equivalence relation between two categorified group-sets is given by a particular equivalence between the underlying categories.  We also exhibit  some of the differences between the old Burnside theory and the new one.
Lastly, we briefly explain how to extend these new techniques and concepts to the context of groupoids, employing the categories of (right) groupoid-sets, aiming by this to give an alternative approach to the classical Burnside ring of groupoids.
\end{abstract}

\keywords{Internal categories; Categorification; Group actions; Simplicial sets; Burnside ring; Double categories; Groupoid actions.}
\thanks{This article was written while Leonardo Spinosa was a member of the ``National Group for Algebraic and Geometric Structures, and their Applications'' (GNSAGA-INdAM).
Research supported by the Spanish Ministerio de Econom\'{\i}a y Competitividad  and the European Union FEDER, grant MTM2016-77033-P}

\maketitle
\vspace{-0.8cm}
\begin{small}
\tableofcontents
\end{small}

\pagestyle{headings}
\vspace{-1.2cm}

\section*{Introduction}

The Burnside theory for finite groups is a classical subject in the context of group representation theory and it has been introduced for the first time in~\cite{Burnside:1911}.
Afterwards, this topic has been further developed in~\cite{SolBurnAlgFinGr} and~\cite{DressSolvGrps} and it has found many applications in different fields like, for example, homotopy theory (see~\cite{Segal:1970}).
Aiming to extend this theory to the framework of ``finite'' groupoids, we discovered, in~\cite{KaoutitSpinosaBurn}, that the Burnside contravariant functor does not distinguish between  a given groupoid and its bundle of isotropy groups.
Specifically, it has been realized in~\cite{KaoutitSpinosaBurn} that, under appropriate finiteness conditions,  the classical Burnside ring of a given groupoid is isomorphic to the product of the Burnside rings of its isotropy group types, although, not in a canonical way.  
The crux is that the isomorphism relation between finite (and not finite) groupoid-sets leads only to the consideration of (right) cosets by subgroupoids with a single object, and this, somehow, obscures the whole structure of the handled groupoid.
In other words, the classical Burnside ring of a (finite) groupoid dose not codifies, as in the classical case,   information about the whole ``lattice'' of subgroupoids, since the subgroupoids with several objects do not show up at all.

This paper is an attempt to give another approach to the Burnside ring of groupoids, by considering the category (2-category in fact) of internal categories inside the category of (right) groupoid-sets.  The objects of this category (the 0-cells) are referred to as \emph{categorified groupoid-sets} and, by abuse of terminology, the associated ring is called the \emph{categorified Burnside ring} of the given groupoid.
It is noteworthy to mention that, albeit we get in this way a commutative ring that strictly contains the classical one,  we show that this new ring also can be decomposed, in a not canonical way, as a product of rings, which are the  categorified Burnside rings of the isotropy group types of the groupoid.
This makes manifest that also the idea of employing the categorification of the notion of groupoid-sets does not adjust to the groupoid structural characteristics.  Nevertheless, in the case of  groups (i.e., groupoids with  only one object), this categorification approach leads to a new Burnside ring to enter into the picture (see Example~\ref{exam:ExmGrps}).

The concept of categorification has been explained extensively in~\cite[pag.~495]{BaezCransHDimAlgVILiAlg} and~\cite{BaezLaudaHDimAlgV2Grps}. Roughly speaking,  the idea behind is to replace the underlying set of an algebraic structure (resp.~the structure maps), like a group, with a certain category (resp.~with functors), with the goal of obtaining a new  structure which could help to understand the initial one.
In the case of the category of groups, for example, the categorificaton process produces the notion of \(2\)-group (see~\cite{BaezLaudaHDimAlgV2Grps}), which has been proved to be equivalent to the concept of crossed module introduced by Whitehead in~\cite{WhiteheadNoteAddingRel} and~\cite{WhiteheadCombHomII}.
To performe the categorification of a structure, the notions of internal category, internal functor and internal natural transformation, introduced in~\cite{EhresmannCatStruc}, \cite{EhresmannCatStrucIII} and~\cite{EhresmannIntroStrucCat}, are crucial and they will be used extensively in this work.

The main idea of this paper is to categorify the notion of group action on a set to obtain a particular category with a group action on both the set of objects and of morphisms.
Moreover, the source, target, identity and composition maps of this category will have to be compatible with the group action.
Regarding the usual right translation groupoid, it will be replaced by a right translation double category (an internal category in the category of small categories) to illustrate the new higher dimensional situation.

After this, we elaborate a new Burnside theory, based on a particular notion of weak equivalence between these new categories endowed with a group action.
We refer the reader to~\cite{Burnside:1911} and~\cite{SolBurnAlgFinGr} for the classical Burnside theory of groups.
We note that there are, in the literature, other generalizations of the classical Burnside theory: see for instance, \cite{OdaYoshida2001}, \cite{Hartman/Yalcin:2007}, \cite{Diaz/Libman:2009} and \cite{GunnellsEtall}.

In the last section we briefly explain how to extend this idea of categorification to the case of groupoid actions and how it can be reduced to the case of group actions.
We refer the reader to~\cite{KaoutitSpinosaBurn} for the classical Burnside theory applied to groupoids.

\section{Preliminaries and basic definitions}\label{sec:PBN}
In this section we recall the notion of internal categories in small categories with pull-backs\footnote{A category is called small when it is a hom-sets  category \cite{Lane1998}, and  its class of objects is actually a set.\label{Fnote:I}}, and use  this notion to introduce what we will call the category of \emph{right categorified group-sets}.
This is a kind  of a categorificaton of the usual notion of right group-set object (see~\cite{BoucBisetFunctors}). 

Given two  functions \(\alpha \colon A \longrightarrow D\) and \(\beta \colon B \longrightarrow D\), we will use the notation:
\[ 
A \pdue{\times}{\Sscript{\alpha}}{\Sscript{\beta}}  	B = \Set{ \left({a, b}\right) \in A \times B | \alpha\left({a}\right)= \beta\left({b}\right)  }.
\]
This set is well known as the \imp{fibre product} of \(\alpha\) and \(\beta\) and it is the pull-back of the maps \(\alpha\) and \(\beta\) in the category of sets. This means that the following commutative diagram, 
\begin{equation}
\label{Eq:Cartesian}
\begin{gathered}
\xymatrix{  A \pdue{\times}{\Sscript{\alpha}}{\Sscript{\beta}}  B \ar@{->}^-{\pr_2}[rr]  \ar@{->}_-{\pr_1}[d]  & & B  \ar@{->}^-{\beta}[d] \\ A \ar@{->}^-{\alpha}[rr] & & D }
\end{gathered}
\end{equation}
is cartesian, where $\pr_1$ and $\pr_2$ are the canonical projections. This notion can be also adopted in a categorical setting replacing sets with objects in a given category with pull-backs. This, in particular, applies to the category of small categories and functors between them.
The following Definition~\(\ref{dIntCat}\) is taken from~\cite[pag.~495]{BaezCransHDimAlgVILiAlg}.

\begin{definition}\label{dIntCat}
Given a category with pull-back \(\Cat{C}\), we define an \imp{internal category \(\Cat{X}\) in \(\Cat{C}\)} as a couple of objects \(\Xob\) and \(\Xmo\) of \(\Cat{C}\) and morphisms
\[ \xymatrix{\Xmo \ar@<1ex>[rr]^{\mathsf{t}\ped{\Cat{X} } } \ar@<-1ex>[rr]_{\mathsf{s}\ped{\Cat{X} } } && \Xob \ar[rr]^{\iota\ped{\Cat{X} } } && \Xmo &&& \Xmod \ar[lll]_{m\ped{\Cat{D} } }  : = \Xmo \pdue{\times}{\mathsf{s}\ped{\Cat{X} } }{\mathsf{t}\ped{\Cat{X} } } \Xmo },
\]
where \(\mathsf{s}\ped{\Cat{X}}\) and \(\mathsf{t}\ped{\Cat{X}}\) are called the \imp{source} and the \imp{target morphisms}, respectively, \(\iota\ped{\Cat{X}}\) is called the \imp{identity morphism} and \(m\ped{\cX}\) is called the \imp{composition morphism}, or ``\imp{multiplication morphism}'', such that the following diagrams are commutative (note that we will use the notation \(\Xmot  : = \Xmo \pdue{\times}{\mathsf{s}\ped{\Cat{X} } }{\mathsf{t}\ped{\Cat{X} } } \Xmo \pdue{\times}{\mathsf{s}\ped{\Cat{X} } }{\mathsf{t}\ped{\Cat{X} } } \Xmo \)).
\[ \begin{gathered}
\xymatrix{
\Xob \ar[rr]^{\iota\ped{\Cat{X} } } \ar[d]_{\iota\ped{\Cat{X} } } \ar[rrd]^{\id{\Xob}  }  & & \Xmo  \ar[d]^{\mathsf{s}\ped{\Cat{X} } }  \\
\Xmo \ar[rr]^{\mathsf{t}\ped{\Cat{X} } } & & \Xob 
}
\quad 
\xymatrix{
\Xmod \ar[r]^{m\ped{\Cat{X} } } \ar[d]_{\pr\sped{1} } &  \Xmo  \ar[d]^{\mathsf{t}\ped{\Cat{X} } }  \\
\Xmo \ar[r]^{\mathsf{t}\ped{\Cat{X} } } &  \Xob 
}
\quad 
\xymatrix{
\Xmod \ar[r]^{m\ped{\Cat{X} } } \ar[d]_{\pr\sped{2} } &  \Xmo  \ar[d]^{\mathsf{s}\ped{\Cat{X} } }  \\
\Xmo \ar[r]^{\mathsf{s}\ped{\Cat{X} } } &  \Xob 
}
\quad 
\xymatrix{
\Xmot \ar[rr]^{m\ped{\Cat{X} } \times \id{\Xmo}  } \ar[d]|{\id{\Cat{X} } \times m\ped{\Xmo } } & & \Xmod  \ar[d]^{ m\ped{\Cat{X} } }  \\
\Xmod \ar[rr]^{m\ped{\Cat{X} } } & & \Xmo 
} \\
 \xymatrix{
\Xob \pdue{\times}{\id{\Xob} }{\mathsf{t}\ped{\Cat{X} } } \Xmo \ar[rr]^{\iota\ped{\Cat{X} } \times \id{\Xmo}  } \ar[rrd]_{\pr\sped{2} } && \Xmo \pdue{\times}{\mathsf{s}\ped{\Cat{X} } }{\mathsf{t}\ped{\Cat{X} } } \Xmo  \ar[d]^{m\ped{\Cat{X} } } && \Xmo \pdue{\times}{\mathsf{s}\ped{\Cat{X} } }{\id{\Xob}  } \Xob \ar[ll]_{\id{\Xmo} \times \iota\ped{\Cat{X} } } \ar[lld]^{\pr\sped{1}} \\
 && \Xmo  && 
}
\end{gathered}
\]
\end{definition}

Internal categories in $\Cat{C}$ can be viewed as 0-cells in a certain 2-category: 

\begin{definition}
\label{dIntFun}
Let  \(\Cat{C}\) be a  category with pull-back, and consider two internal categories \(\Cat{X}\) and \(\Cat{Y}\) in \(\Cat{C}\).
We define an \imp{internal functor \(F \colon \Cat{X} \longrightarrow \Cat{Y}\) in \(\Cat{C}\)} as a couple of morphism \(F\sped{0} \colon \Xob \longrightarrow \Yob\) and \(F\sped{1} \colon \Xmo \longrightarrow  \Ymo\) such that the following diagrams are commutative, where we use the notation \(F\sped{2} = F\sped{1} \times F\sped{1} \colon \Xmod \longrightarrow \Ymod\).
\[ \xymatrix{
\Xmo \ar[r]^{\mathsf{s}\ped{\Cat{X} } } \ar[d]_{F\sped{1} } &  \Xob  \ar[d]^{F\sped{0} }  \\
\Ymo \ar[r]^{\mathsf{s}\ped{\Cat{Y} } } &  \Yob 
}
\qquad  \qquad
\xymatrix{
\Ymo \ar[r]^{\mathsf{t}\ped{\Cat{X} } } \ar[d]_{F\sped{1} } & \Xob  \ar[d]^{F\sped{0} }  \\
\Ymo \ar[r]^{\mathsf{t}\ped{\Cat{Y} } } & \Yob 
}
\qquad  \qquad
 \xymatrix{
\Xob \ar[r]^{\iota\ped{\Cat{X} } } \ar[d]_{F\sped{0} } &  \Xmo  \ar[d]^{F\sped{1} }  \\
\Yob \ar[r]^{\iota\ped{\Cat{Y} } } &  \Ymo 
}
\qquad \qquad
\xymatrix{
\Xmod \ar[r]^{m\ped{\Cat{X} } } \ar[d]_{F\sped{2} }  & \Xmo  \ar[d]^{F\sped{1} }  \\
\Ymod \ar[r]^{m\ped{\Cat{Y} } } &  \Ymo 
}
\]
\end{definition}

\begin{definition}\label{dIntNatTra}
Given  \(\Cat{C}\) as above, let's consider two internal categories \(\Cat{X}\) and \(\Cat{Y}\) in \(\Cat{C}\), two internal functors \(F, G \colon \cX \longrightarrow \cY\) in \(\Cat{C}\), we define an \imp{internal natural transformation \(\alpha \colon F \longrightarrow G\) in \(\Cat{C}\)} as a morphism \(\alpha \colon \Xob \longrightarrow \Ymo\) in \(\Cat{C}\) such that the following diagrams are commutative, where \(\Delta\) denotes the morphism given by the universal property of the pull-back in \(\Cat{C}\).
\[ \xymatrix@R=30pt{
\Xob \ar[rrd]_{ F\sped{0} }  \ar[rr]^{\alpha} &  & \Ymo \ar[d]^{\mathsf{s}\ped{\mathcal{X} } }  \\
&& \Yob        }
\qquad  \qquad
\xymatrix@R=30pt{
\Xob \ar[rrd]_{ G\sped{0} }  \ar[rr]^{\alpha} &  & \Ymo \ar[d]^{\mathsf{t}\ped{\mathcal{X} } }  \\
&& \Yob        }
\qquad \qquad
 \xymatrix@R=30pt{
\Xmo \ar[rr]^{\Delta\left( G\sped{1}, \, \alpha \mathsf{s}\ped{\Cat{X} } \right) } \ar[d]|{\Delta\left({\alpha \mathsf{t}\ped{\Cat{X} }, \,F\sped{1} }\right)} & & \Ymod  \ar[d]^{m\ped{\Cat{Y} } }  \\
\Ymod \ar[rr]^{m\ped{\Cat{Y} } } & & \Ymo
}
\]
\end{definition}

Internal natural transformations can be composed (horizontally or vertically) in a similar way to ordinary natural transformation and we refer to~\cite[Pag. 498]{BaezCransHDimAlgVILiAlg} for a more detailed explanation.

We note that the experienced reader will not fail to see the similarities between the theory of internal categories and enriched category theory (see \cite{KellyErichCatTh2005}).
In this work, however, we chose to keep the internal categories approach already used  in~\cite{BaezLaudaHDimAlgV2Grps} and~\cite{BaezCransHDimAlgVILiAlg}.

For a given group $G$, we will denote by $\rset{G}$ its category of right $G$-sets.  
Morphisms in this category are referred to as \emph{$G$-equivariant maps}.
The pull-backs in $\rset{G}$ are given as follows: Let us assume that we have a  diagram of $G$-equivariant maps $\alpha: X \rightarrow Z \leftarrow Y: \beta$.
Then the pull-back set $X \pdue{\times}{\Sscript{\alpha}}{\Sscript{\beta}} Y$ is a $G$-set via the action $(x,y) g= (xg,yg)$, with $x \in X$, $y \in Y$ and $g \in G$,  such that the analogue diagram of equation~\eqref{Eq:Cartesian} becomes a cartesian square of right $G$-sets.  

\begin{definition}\label{def:SiSets}
Given a group \(G\), we define a \imp{right categorified \(G\)-set} as an internal category in the category of right \(G\)-sets, a \imp{morphism of right categorified \(G\)-sets} as an internal functor in the category of right \(G\)-sets and a \imp{\(2\)-morphism between morphisms of right categorified \(G\)-sets} as an internal natural transformation in \(\rset{G}\).
In this way, thanks to~\cite{EhresmannCatStruc} and~\cite[Prop.~2.4]{BaezCransHDimAlgVILiAlg}, we obtain a \(2\)-category that, by abuse of notations,  we denote with \(\CRset{G}\). The category of left categorified $G$-set is similarly defined, and clearly isomorphic to the right one. We will also employ the terminology \emph{categorified right group-set}, whenever the handled group is not relevant for the context.  In all what follows \emph{categorified group-sets} stands for right ones.
\end{definition}

\begin{remark}\label{rem:SiSets}
In fact, the category of  categorified $G$-sets is the category that ``contains'' $\CRset{G}$ as full subcategory. Such a category  is defined as the category of functors  $[\bd{\Delta}^{op}, \rset{G}]$ from the opposite category of  $\bd{\Delta}$ of finite sets $\Delta_{n}=\{ 0, 1, \cdots, n\}$, with increasing maps as arrows, to the category $\rset{G}$ of right $G$-sets. This somehow justifies the employed terminology in Definition~\ref{def:SiSets}. 
The choice that we made, in working with the category $\CRset{G}$ instead of $[\bd{\Delta}^{op}, \rset{G}]$,  has its origin, apparently, in some of the  difficulties that the whole category of categorified right $G$-sets presents, especially in developing a certain kind of Burnside theory, as we will see  in the sequel for $\CRset{G}$. Nevertheless, it is noteworthy to mention that many of the results stated below for $\CRset{G}$ might be directly extended  to the whole category of categorified $G$-sets. 
\end{remark}

\begin{remark}\label{lOrdinaryCatTheory}
Following Definition \ref{def:SiSets}, we have to note that categorified \(G\)-sets, morphisms of categorified \(G\)-sets, and the relative \(2\)-morphisms constitute, respectively, categories, functors and natural transformations in the usual sense. This means that many definitions of the usual category theory are valid in this setting. For example, given a  \(G\)-set \(\Cat{X}\), an element \(f \in \Xmo\) is called an \imp{isomorphism} if there is \(h \in \Xmo\) such that \(hf = \iota\ped{\Cat{X} }\left({\mathsf{s}\ped{\Cat{X} }(f)}\right)\) and \(fh = \iota\ped{\Cat{X} }\left({\mathsf{t}\ped{\Cat{X} }(f)}\right)\).
In this way, we obtain a forgetful functor from the \(2\)-category of internal categories in a category \(\Cat{C}\) to the category of ordinary small categories.
\end{remark}

Direct consequences of Definition \ref{def:SiSets}, are as follows. Let  \(\Cat{X}\) be a  categorified \(G\)-set, 
given \(a, b \in \Xob\), we will use the notation
\[ 
\Cat{X}(a,b) = \Big\{ f \in \Xmo |\;  \mathsf{s}\ped{\Cat{X} }(f) = a \quad \text{and} \quad
\mathsf{t}\ped{\Cat{X} }(f) = b \Big\}.
\]

As was mentioned above, the set $\Xmod$ admits in a canonical way an action of $G$-set, given by  $(p, q) .  g\,=\, (pg, qg)$, for every  $(p, q) \in \Xmod$ and $g \in G$.  In this way, the fact that the composition map $m\ped{\Cat{X}}$ is a $G$-equivariant leads to the following equalities:
\begin{equation}\label{Eq:circ}
(p \circ q) \, . \,  g \,\, =\,\, (p g) \circ (qg)
\end{equation}
for every composable arrows $p, q$, and every element $g \in G$. In this direction, we have that a morphism $p \in \Xmo$ is an isomorphism if and only if $pg$ is an isomorphism for some $g \in G$. Furthermore, for any element $a \in \Xob$ and $g \in G$, we have that the map 
\[
\cX(a,a) \longrightarrow \cX(ag,ag), \quad \Big( \ell \longmapsto \ell\, g\Big)
\]
is a morphism of monoids (or semigroups).

In the rest of the paper we will consider the category  \(\CRset{G}\), defined in Definition~\ref{def:SiSets}, mainly as a category (the \(2\)-category level will be used to define the concept of weak equivalence in Definition~\(\ref{dWeakEquiv}\) below).

\begin{remark}
Let be \(\Cat{X} \in \CRset{G}\): we consider the decomposition of \(\Xob\) into orbits (i.e., transitive  \(G\)-sets) \(\Xob = \biguplus\ped{\alpha \in A} \left[{x_\alpha}\right] G\) with \(x_\alpha \in \Xob\) for each \(\alpha \in A\), where $A$ is a certain set of representative elements. 
Since \(\iota\ped{\Cat{X}}\) is a morphism of  \(G\)-sets, for each \(\alpha \in A\) we obtain \(\iota\ped{\Cat{X} }\left({\left[{x_\alpha}\right] G }\right) = \left[ \iota\ped{\Cat{X} }\left({x_\alpha}\right) \right] G\). Therefore, we can state that
\[ \Xmo = \left({\biguplus\ped{\alpha \, \in \,  A} \left[ \iota\ped{\Cat{X} } \left({ x_\alpha }\right) \right] G }\right) \Uplus \left({\biguplus\ped{\beta\,  \in\,  B} \left[ y_\beta \right] G }\right) 
\]
with \(x_\alpha \in \Xob\) for each \(\alpha \in A\) and \(y_\beta \in \Xmo \setminus \iota\ped{\Cat{X} }\left({\Xob}\right)\) for each \(\beta \in B\), with \(B \cap A = \emptyset\).
As a consequence we can state that \(\Cat{X}\) is a discrete category if and only if \(\Xmo = \biguplus\ped{\alpha\, \in\,  A} \left[ \iota\ped{\Cat{X} } \left({ x_\alpha }\right) \right] G\).
\end{remark}

\section{The symmetric monoidal structures of categorified group-sets}\label{secMonStruc}
We describe the two symmetric monoidal structures underlying the category of categorified group-sets: one is given by the disjoint union, i.e., the coproduct   $\Uplus$ , and the other by the product $\times$. A distributivity laws, in the appropriate sense, between these two structures, is what is known in the literature as a Laplaza category: see for instance~\cite[Appendices]{KaoutitSpinosaBurn} and the references therein. This fact will be mentioned nowhere below, however, and it will be implicitly used as long as needed.

Let us fix a  group \(G\); a  categorified $G$-set will be denoted by $\Cat{X}:=(\Cat{X}_{0}, \Cat{X}_1, {\sf{s}}_{\Cat{X}}, {\sf{t}}_{\Cat{X}}, \iota_{\Cat{X}}, m_{\Cat{X}})$.  In this way, it is evident that the object \(\emptyset = ( \emptyset,  \emptyset, \emptyset, \emptyset, \emptyset, \emptyset)\) is initial in \(\CRset{G}\), that the disjoint union \(\Uplus\) is a coproduct in \(\CRset{G}\) and that \(\left({\CRset{G}, \Uplus, \emptyset}\right)\) is a strict monoidal category.
Specifically, given objects two categorified $G$-sets  \(\Cat{X}, \Cat{Y}\), we define
\[ \mathsf{s}\ped{\Cat{X} \uplus \Cat{Y} } = \mathsf{s}\ped{\Cat{X} } \Uplus \mathsf{s}\ped{\Cat{Y} }, 
\quad \mathsf{t}\ped{\Cat{X} \uplus \Cat{Y} } = \mathsf{t}\ped{\Cat{X}} \Uplus \mathsf{t}\ped{\Cat{Y} }, 
\quad  \iota\ped{\Cat{X} \uplus \Cat{Y} } = \iota\ped{\Cat{X}} \Uplus \iota\ped{\Cat{Y} }
\quad \text{and} \quad
m\ped{\Cat{X} \uplus \Cat{Y} } = m\ped{\Cat{X} } \Uplus m\ped{\Cat{Y} }.
\]
Moreover, given morphisms \(\varphi \colon \Cat{X} \longrightarrow \Cat{Y}\) and \(\psi \colon \Cat{A} \longrightarrow \Cat{B}\) in \(\CRset{G}\), we define the morphism \(\varphi \Uplus \psi \colon \Cat{X} \Uplus \Cat{A} \longrightarrow \Cat{Y} \Uplus \Cat{B}\)  as the couple of morphisms \(\left( \varphi \Uplus \psi \right)\sped{0} = \varphi\sped{0} \Uplus \psi\sped{0}\) and \(\left( \varphi \Uplus \psi \right)\sped{1} = \varphi\sped{1} \Uplus \psi\sped{1}\) in \(\Rset{G}\).

Next, we  construct a monoidal structure \(\left({\CRset{G}, \times, \one}\right)\), where \(1\) is the  group with a single element.
Given \(\Cat{X}, \Cat{Y}  \in \CRset{G}\) we can consider the Cartesian product of the underlying categories $\cX$ and $\cY$ in the category of small categories.  Thus, we define \(\left({\Cat{X} \times \Cat{Y} }\right)\sped{0} =  \Xob \times \Yob\), \(\left({\Cat{X} \times \Cat{Y} }\right)\sped{1} =  \Xmo \times \Ymo\)
and
\[ \Cat{X} \times \Cat{Y}  = \left( \Xob \times \Yob, \,  \Xmo \times \Ymo,  \, \mathsf{s}\ped{\Cat{X} \times \Cat{Y}}, \, \mathsf{t}\ped{\Cat{X} \times \Cat{Y}}, \, \iota\ped{\Cat{X} \times \Cat{Y}}, \, m\ped{\Cat{X} \times \Cat{Y}} \right)
\]
where \(\mathsf{t}\ped{\Cat{X} \times \Cat{Y}} = \left({\mathsf{t}\ped{\Cat{X} }, \mathsf{t}\ped{ \Cat{Y}} }\right)\), \(\mathsf{s}\ped{\Cat{X} \times \Cat{Y}} = \left({\mathsf{s}\ped{\Cat{X} }, \mathsf{s}\ped{ \Cat{Y}} }\right)\) and \(\iota\ped{\Cat{X} \times \Cat{Y}} = \left({\iota\ped{\Cat{X} }, \iota\ped{ \Cat{Y}} }\right)\). 
Regarding the composition, we define
\[ \begin{aligned}{m\ped{\Cat{X} \times \Cat{Y}}}  \colon & {\left({\Cat{X} \times \Cat{Y} }\right)\sped{2} } \longrightarrow {\left({\Cat{X} \times \Cat{Y} }\right)\sped{1} } \\ & {\big( (x,y), (a,b) \big)}  \longrightarrow {\left({m\ped{\Cat{X}}(x,a), m\ped{\Cat{Y}}(y,b)}\right). }\end{aligned} 
\]
It is immediate to verify that \(\mathsf{s}\ped{\Cat{X} \times \Cat{Y}}\), \(\mathsf{t}\ped{\Cat{X} \times \Cat{Y}}\), \(\iota\ped{\Cat{X} \times \Cat{Y}} = \left({\iota\ped{\Cat{X} }, \iota\ped{ \Cat{Y}} }\right)\) and \(m\ped{\Cat{X} \times \Cat{Y} }\) are morphisms in \(\rset{G}\).
We have to prove that the diagrams of Definition~\(\ref{dIntCat}\) about \(\Cat{X} \times \Cat{Y}\) are commutative, but this is a direct verification and follows from the analogous diagrams about \(\Cat{X}\) and \(\Cat{Y}\).

Now, given morphisms \(\varphi \colon \Cat{X}   \longrightarrow \Cat{Y}\) and \(\psi \colon \Cat{A}  \longrightarrow \Cat{B}\) in \(\CRset{G}\) we define the morphism
\[ \varphi\times  \psi \colon \Cat{X}  \times\Cat{A}   \longrightarrow \Cat{Y}  \times \Cat{B}
\]
in \(\CRset{G}\) as the couple of morphisms \(\left( \varphi \times \psi \right)\sped{0} = \varphi\sped{0} \times \psi\sped{0} \) and \(\left( \varphi \times  \psi \right)\sped{1} = \varphi\sped{1} \times  \psi\sped{1}\) in \(\Rset{G}\).
We have to prove that the diagrams of Definition~\(\ref{dIntFun}\) about \(\varphi \times \psi\) are commutative, but this is a direct verification and follows from the analogous diagrams about \(\varphi\) and \(\psi\).
It is now obvious that we have constructed a functor \(\left( - \times  - \right)  \colon \CRset{G} \times \CRset{G} \longrightarrow \CRset{G}\).

In order to complete the monoidal structure on $\CRset{G}$, we need to construct natural isomorphisms
\[ 
\Phi \colon \left({ \id{\CRset{G} }  \times \one }\right) \longrightarrow \id{\CRset{G} } ,
\qquad 
\Psi \colon \left({ \one   \times  \id{\CRset{G} }   \longrightarrow \id{\CRset{G} } }\right)
\]
and the associator 
\[ \left({ \left({ - \times - }\right) \times - }\right) \longrightarrow \left({ - \times \left({- \times -}\right) }\right) .
\]
The associator is the identity which is clearly a natural isomorphism and satisfies the pentagonal constraint.
We will construct only \(\Phi\) because \(\Psi\) can be realized in a similar way.
Let \(\cX\) be a categorified $G$-set: for \(i=0,1\) we have  the isomorphisms of  \(G\)-sets
\[ \begin{aligned}{\Phi\left({\Cat{X} }\right)\sped{i} }  \colon & {\Xmoi \times \one } \longrightarrow {\Xmoi} \\ & {(a,1)}  \longrightarrow {a.}\end{aligned} 
\]
It is a direct verification to check that \(\Phi\left({\Cat{X}}\right)\) is a morphism of categorified \(G\)-sets.
Now we just have to prove that
\[  \Phi \colon \left({ \id{\CRset{G} }  \times   \one }\right) \longrightarrow \id{\CRset{G} } 
\]
is a natural transformation: let \(\alpha \colon \Cat{X} \longrightarrow \Cat{Y}\) be a morphism in \(\CRset{G}\).
We have to show that the diagram
\[ \xymatrix{
\Cat{X} \times \one  \ar[rr]^{\Phi\left({\Cat{X} }\right) } \ar[d]_{\alpha \times \one } & & \Cat{X}   \ar[d]^{\alpha}  \\
\Cat{Y} \times \one  \ar[rr]^{\Phi\left({\Cat{Y} }\right) } & & \Cat{Y} 
}
\]
is commutative which is equivalent to say that the diagram
\[ \xymatrix{
\Xmoi \times \one  \ar[rr]^{\Phi\left({\Cat{X} }\right)\sped{i}  } \ar[d]_{\alpha\sped{i}  \times \one } & & \Xmoi   \ar[d]^{\alpha\sped{i} }  \\
\Ymoi \times \one  \ar[rr]^{\Phi\left({\Cat{Y} }\right)\sped{i}  } & & \Ymoi 
}
\]
is commutative for \(i=0,1\), but this is immediate to check.
Lastly, it is obvious that \(\Phi\) and \(\Psi\) satisfies the triangular identities, and this completes the claimed constructions.

In the same direction, there is an ``inclusion'' functor from the category of usual right \(G\)-sets to the category of  categorified \(G\)-sets. Specifically, we have a fully faithful functor
\begin{equation}
\label{eIncFun}
\begin{aligned}{\SInc{G} }  \colon & {\rset{G} } \longrightarrow {\CRset{G} } \\ & {X }  \longrightarrow {\Big( X, X, \mathsf{s}\ped{X }, \mathsf{t}\ped{X }, \iota\ped{X }, m\ped{X } \Big) }\end{aligned} 
\end{equation} 
where \(\mathsf{s}\ped{X} = \mathsf{t}\ped{X} = \iota\ped{X} = \id{X}\) and \(m\ped{X}= \pr\sped{1} \colon X\sped{2} \longrightarrow X\), that is, the underlying category of the image of $X$ is the discrete one.
The behaviour of \(\SInc{G}\) on morphisms is obvious.
Basically the idea is that the image of \(\SInc{G}\) is given by discrete categories.
Moreover, we will use the abuse of notation \(\one = \SInc{G}\left({\one}\right)\).

Summing up, we have,  therefore, proved the following result.

\begin{proposition}
Given a group \(G\), \(\left({\CRset{G}, \times, \one}\right)\) is a symmetric monoidal category such that the functor $\cI\text{-}\cG$ becomes a strict symmetric monoidal functor.
\end{proposition}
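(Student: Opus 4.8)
Almost all of the required structure was assembled in the discussion preceding the statement, so it remains only to equip $\CRset{G}$ with a symmetry and to verify the assertion concerning the inclusion functor $\SInc{G}$. My plan is to take as braiding the evident component-wise swap and to reduce every coherence condition to the corresponding classical statement for the Cartesian symmetric monoidal structure on $\Sets$, using the forgetful functor of Remark~\ref{lOrdinaryCatTheory} together with the fact that the $G$-action on a product of categorified $G$-sets is the diagonal one. The only step that calls for a little care is the bookkeeping with the iterated fibre products entering the definition of the product of two internal categories; but in every case below these either degenerate, for the discrete categories in the image of $\SInc{G}$, or reduce component-wise to the Cartesian structure on $\Sets$, so no genuine obstacle appears.

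First I would, for categorified $G$-sets $\cX$ and $\cY$, define a morphism $\sigma_{\cX,\cY}\colon\cX\times\cY\longrightarrow\cY\times\cX$ in $\CRset{G}$ by the component maps
\[
\left(\sigma_{\cX,\cY}\right)\sped{0}\colon\Xob\times\Yob\longrightarrow\Yob\times\Xob,\qquad(a,b)\longmapsto(b,a),
\]
\[
\left(\sigma_{\cX,\cY}\right)\sped{1}\colon\Xmo\times\Ymo\longrightarrow\Ymo\times\Xmo,\qquad(f,g)\longmapsto(g,f).
\]
Each component is a bijective $G$-equivariant map, since the action on a product is diagonal, and the pair is an internal functor because $\mathsf{s}\ped{\cX\times\cY}$, $\mathsf{t}\ped{\cX\times\cY}$, $\iota\ped{\cX\times\cY}$ and $m\ped{\cX\times\cY}$ were all defined component-wise, so the four squares of Definition~\ref{dIntFun} for $\sigma_{\cX,\cY}$ collapse to trivially commutative diagrams of swap maps. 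Naturality of $\sigma$ in $(\cX,\cY)$ amounts, for every $\varphi\colon\cX\to\cX'$ and $\psi\colon\cY\to\cY'$ in $\CRset{G}$ and $i=0,1$, to the identity $\left(\sigma_{\cX',\cY'}\right)\sped{i}\circ(\varphi\sped{i}\times\psi\sped{i})=(\psi\sped{i}\times\varphi\sped{i})\circ\left(\sigma_{\cX,\cY}\right)\sped{i}$ of maps of $G$-sets, which is clear; hence $\sigma$ is a natural isomorphism with inverse $\sigma_{\cY,\cX}$.

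Next I would verify the remaining axioms. The symmetry relation $\sigma_{\cY,\cX}\circ\sigma_{\cX,\cY}=\id{\cX\times\cY}$ is immediate on each component. The two hexagon identities, the associators being identities, reduce to $\sigma_{\cX\times\cY,\cZ}=\left(\sigma_{\cX,\cZ}\times\id{\cY}\right)\circ\left(\id{\cX}\times\sigma_{\cY,\cZ}\right)$ and $\sigma_{\cX,\cY\times\cZ}=\left(\id{\cY}\times\sigma_{\cX,\cZ}\right)\circ\left(\sigma_{\cX,\cY}\times\id{\cZ}\right)$, both of which hold component-wise since they are the corresponding identities of maps in $\rset{G}$. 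Finally, the coherence of $\sigma$ with the unitors follows by inspecting components: $\left(\Phi(\cX)\right)\sped{i}$ sends $(a,1)\mapsto a$, $\left(\Psi(\cX)\right)\sped{i}$ sends $(1,a)\mapsto a$ and $\left(\sigma_{\cX,\one}\right)\sped{i}$ sends $(a,1)\mapsto(1,a)$, whence $\Phi(\cX)=\Psi(\cX)\circ\sigma_{\cX,\one}$. This shows that $\left(\CRset{G},\times,\one,\sigma\right)$ is symmetric monoidal.

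It then remains to treat $\SInc{G}\colon\rset{G}\longrightarrow\CRset{G}$. For $X\in\rset{G}$ the underlying category of $\SInc{G}(X)$ is discrete, so in degree $2$ the fibre products computing $\SInc{G}(X)\times\SInc{G}(Y)$ and $\SInc{G}(X\times Y)$ both collapse to $X\times Y$; comparing the structure maps on the two sides — source, target and identity are the identity of $X\times Y$, and the composition is $\pr\sped{1}$ — one obtains $\SInc{G}(X)\times\SInc{G}(Y)=\SInc{G}(X\times Y)$, both on objects and on morphisms, while $\SInc{G}(\one)=\one$ is the abuse of notation already fixed. Hence the structure constraints of $\SInc{G}$ are identities, i.e.\ $\SInc{G}$ is strict monoidal; compatibility with the associators is automatic since both are identities, compatibility with the unitors holds because $\SInc{G}$ acts as the identity on underlying $G$-sets while the unitors on both sides are the canonical maps $Z\times\one\to Z$, and compatibility with the symmetries holds because $\SInc{G}$ applied to the flip in $\rset{G}$ is again the flip. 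Therefore $\SInc{G}$ is a strict symmetric monoidal functor, which completes the proof.
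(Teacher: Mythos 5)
Your proposal is correct and follows essentially the same route as the paper: everything is defined component-wise and every coherence condition reduces to the corresponding identity for the Cartesian structure on $\rset{G}$, with the discreteness of the image of $\SInc{G}$ collapsing the fibre products in degree $2$. In fact you supply slightly more than the paper does, since the paper's argument is the construction preceding the statement and leaves the braiding $\sigma$, the hexagon identities, and the strict symmetric monoidality of $\SInc{G}$ implicit, all of which you verify explicitly and correctly.
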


Lastly, let us discuss  the distributivity between the two operations $\uplus$ and $\times$ in the category $\CRset{G}$. We know that we have two monoidal structures \(\left({\CRset{G}, \Uplus, \emptyset}\right)\) and \(\left({\CRset{G}, \times, \one}\right)\). As it was mentioned above, it is necessary to prove the distributivity of \(\times\) over \(\Uplus\).
Let be \(\Cat{X}, \Cat{Y}, \Cat{A} \in \CRset{G}\): we have to construct a morphism
\[ \lambda \colon \left[{ \Cat{X} \Uplus \Cat{Y}  }\right] \times \Cat{A}  \longrightarrow \left[{ \Cat{X}    \times \Cat{A} }\right] \Uplus \left[{ \Cat{Y}  \times \Cat{A} }\right]
\]
in \(\CRset{G}\).
We define it as the couple of morphisms in \(\Rset{G}\), for \(i=0,1\),
\[ \lambda\sped{i} \colon \left[{\Cat{X}\sped{i} \Uplus \Cat{Y}\sped{i}  }\right] \times \Cat{A}\sped{i}  \longrightarrow \left[{\Cat{X}\sped{i}    \times \Cat{A}\sped{i}  }\right] \Uplus \left[{ \Cat{Y}\sped{i}   \times \Cat{A}\sped{i}  }\right]
\]
that send \((a,b)\) to \((a,b)\) both if \(a \in \Xmoi\) and if \(a \in \Ymoi\).
The proof of the commutativity of the diagrams of Definition~\(\ref{dIntFun}\) is now obvious.

\section[Weak equivalences]{Weak equivalences and 2-morphisms}\label{secCAtGSetWEquiv}

We introduce the notion of \emph{weak equivalence}   in the category of categorified group-sets. Then, we show that  both operations $\Uplus$ and $\times$ are compatible with the weak equivalence relation, as well as  with 2-\emph{morphisms}. This will be crucial to build up the categorified Burnside functor in the forthcoming sections.

Fix a group $G$ and consider its category $\CRset{G}$ of categorified $G$-sets. Given morphisms in \(\CRset{G}\)
\[ \varphi, \psi \colon \Cat{X}  \longrightarrow \Cat{Y}
\qquad \text{and} \qquad
\varepsilon,  \eta \colon \Cat{A}  \longrightarrow \Cat{B},\]
let's consider \(2\)-morphisms in \(\CRset{G}\) \(\alpha \colon \varphi \longrightarrow \psi\) and \(\beta \colon \varepsilon \longrightarrow \eta\): we have
\[  \varphi \Uplus \varepsilon , \,  \psi \Uplus \eta \colon \Cat{X} \Uplus \Cat{A} \longrightarrow \Cat{Y}  \Uplus \Cat{B}.
\]
We define the \(2\)-morphism \(\alpha \Uplus \beta\) in \(\CRset{G}\) as the following morphism in \(\CRset{G}\):
\[ \begin{array}{cccc}
\alpha \Uplus \beta  \colon & \Xob \Uplus \Aob  & \longrightarrow & \Ymo \Uplus \Bmo \\
&  \Xob \ni x & \longrightarrow & \alpha(x) \in \Ymo \\
&  \Aob \ni x & \longrightarrow & \beta(x) \in \Bmo .
\end{array}
\]
The verification that \(\alpha \Uplus \beta\) renders the diagrams of Definition~\(\ref{dIntNatTra}\) commutative is immediate and derives from the relative diagrams regarding \(\alpha\) and \(\beta\).
Now, we consider \(\psi' \colon \Cat{X}  \longrightarrow \Cat{Y}\) and \(\eta' \colon \Cat{A}  \longrightarrow \Cat{B}\), morphisms in \(\CRset{G}\), and \(\alpha' \colon \psi \longrightarrow \psi'\) and \(\beta'\colon \eta \longrightarrow \eta'\), \(2\)-morphism in \(\CRset{G}\).

\begin{lemma}\label{lema:uplus}
We have
\[ \left({\alpha' \Uplus \beta'}\right) \left({\alpha \Uplus \beta}\right) = \left({\alpha'\alpha}\right) \Uplus \left({\beta' \beta}\right) \colon \varphi \Uplus \varepsilon \longrightarrow \psi \Uplus \eta
\qquad \text{and} \qquad
\id{\varphi \uplus \psi} = \id{\varphi} \Uplus \id{\psi}.
\]
\end{lemma}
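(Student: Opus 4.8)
The plan is to verify both identities componentwise, reducing everything to the corresponding statements about ordinary natural transformations in $\rset{G}$ (equivalently, in $\Sets$), which hold by the general theory of internal categories recalled after Definition~\ref{dIntNatTra}. Recall that a $2$-morphism in $\CRset{G}$ is, by Definition~\ref{dIntNatTra}, a single morphism in $\rset{G}$ from the object-part of the source to the morphism-part of the target; vertical composition of two such $2$-morphisms is computed exactly as vertical composition of internal natural transformations, i.e.\ via the composition morphism $m\ped{\Cat{Y}}$ (resp.\ $m\ped{\Cat{B}}$) applied to the appropriate pair. So the whole statement lives entirely at the level of morphisms of $G$-sets, and it suffices to compare two maps $\Xob \Uplus \Aob \longrightarrow \Ymo \Uplus \Bmo$.

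First I would unwind the left-hand side. On the summand $\Xob$, the map $\alpha' \Uplus \beta'$ postcomposed with... more precisely, the vertical composite $(\alpha' \Uplus \beta')(\alpha \Uplus \beta)$ evaluated at $x \in \Xob$ is obtained by feeding $\big((\alpha' \Uplus \beta')(\mathsf{s}\ped{\Cat{Y} \uplus \Cat{B}}(\cdot)),\, (\alpha \Uplus \beta)(\cdot)\big)$ into $m\ped{\Cat{Y} \uplus \Cat{B}}$. Since $m\ped{\Cat{Y} \uplus \Cat{B}} = m\ped{\Cat{Y}} \Uplus m\ped{\Cat{B}}$ and $(\alpha \Uplus \beta)$ sends $\Xob$ into $\Ymo$ and $\Aob$ into $\Bmo$, and likewise for $\alpha' \Uplus \beta'$, the disjoint-union structure forces: on $\Xob$ this composite reduces to $m\ped{\Cat{Y}}\big(\alpha'(\mathsf{s}\ped{\Cat{Y}}(\cdot)),\, \alpha(\cdot)\big) = \alpha'\alpha$, and on $\Aob$ it reduces to $\beta'\beta$. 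But $(\alpha'\alpha) \Uplus (\beta'\beta)$ is by definition the map that is $\alpha'\alpha$ on $\Xob$ and $\beta'\beta$ on $\Aob$, so the two agree. The identity $\id{\varphi \uplus \psi} = \id{\varphi} \Uplus \id{\psi}$ is handled the same way: the identity $2$-morphism on an internal functor $F$ is the morphism $\iota \circ F\sped{0}$, and since $\iota\ped{\Cat{Y} \uplus \Cat{B}} = \iota\ped{\Cat{Y}} \Uplus \iota\ped{\Cat{B}}$ and $(\varphi \Uplus \psi)\sped{0} = \varphi\sped{0} \Uplus \psi\sped{0}$, the composite splits over the two summands into $\iota\ped{\Cat{Y}}\varphi\sped{0}$ and $\iota\ped{\Cat{B}}\psi\sped{0}$, which is exactly $\id{\varphi} \Uplus \id{\psi}$.

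I do not expect any genuine obstacle here; the statement is a bookkeeping compatibility. The one point that needs a little care — and which I would state explicitly rather than wave at — is that the composition (and identity) morphisms of $\Cat{Y} \Uplus \Cat{B}$ respect the decomposition into summands, so that a formula defined ``piecewise'' on $\Xob$ and $\Aob$ really does compute the vertical composite; this is precisely the content of the defining formulas $m\ped{\Cat{X} \uplus \Cat{Y}} = m\ped{\Cat{X}} \Uplus m\ped{\Cat{Y}}$ and $\iota\ped{\Cat{X}\uplus\Cat{Y}} = \iota\ped{\Cat{X}} \Uplus \iota\ped{\Cat{Y}}$ fixed at the start of Section~\ref{secMonStruc}, together with the fact that $\alpha \Uplus \beta$ maps each summand into the corresponding summand of $\Ymo \Uplus \Bmo$. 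Once that observation is in place, both equalities follow by direct inspection, exactly as in the case of ordinary categories.
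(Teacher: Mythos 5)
Your proof is correct and takes essentially the route the paper intends: the paper's own proof is just ``It is immediate,'' and your componentwise verification is exactly the detailed computation the paper spells out for the $\times$-analogue in Lemma~\ref{lema:Times}, transported to $\Uplus$ using $m\ped{\Cat{Y}\uplus\Cat{B}}=m\ped{\Cat{Y}}\Uplus m\ped{\Cat{B}}$, $\iota\ped{\Cat{Y}\uplus\Cat{B}}=\iota\ped{\Cat{Y}}\Uplus\iota\ped{\Cat{B}}$, and the fact that $\alpha\Uplus\beta$ preserves the summands. One small slip to fix: in your description of the vertical composite the source map should not appear --- the composite at $x\in\Xob\Uplus\Aob$ is simply $m\ped{\Cat{Y}\uplus\Cat{B}}\bigl((\alpha'\Uplus\beta')(x),(\alpha\Uplus\beta)(x)\bigr)$ (compare the formula $m\ped{\Cat{Y}\times\Cat{B}}\Delta(\alpha'\times\beta',\alpha\times\beta)$ in the paper's proof of Lemma~\ref{lema:Times}), and as written $\alpha'\circ\mathsf{s}\ped{\Cat{Y}}$ does not even typecheck; the conclusion you draw ($\alpha'\alpha$ on $\Xob$ and $\beta'\beta$ on $\Aob$) is nonetheless the correct one.
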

\begin{proof}
It is immediate.
\end{proof}

On the other hand, taking $\alpha, \beta$ as above, we have 
\[ 
\varphi \times \varepsilon, \,  \psi \times \eta \colon \Xset{\varsigma} \times \Aset{\lambda}  \longrightarrow \Yset{\theta} \times \Bset{\mu},
\]
and we want to define a \(2\)-morphism
\[ \alpha \times \beta \colon \varphi \times \varepsilon \longrightarrow \psi \times \eta
\]
in \(\CRset{G}\) as the morphism of  \(G\)-sets
\[ \begin{aligned}{\alpha \times \beta }  \colon & {\Xob \times \Aob} \longrightarrow {\Ymo \times \Bmo} \\ & {(x,a)}  \longrightarrow {\left({\alpha(x), \beta(a)}\right) .}\end{aligned} 
\]
In this way, it is sufficient to check that  the  following three diagrams of Definition~\(\ref{dIntNatTra}\)
\[ \xymatrix{
\Xob \times \Aob   \ar[rr]^{\alpha \times \beta} \ar[rrd]_{\varphi\sped{0} \times \varepsilon\sped{0} }  &  & \Ymo \times \Bmo \ar[d]^{\mathsf{s}\ped{\Cat{Y} \times \Cat{B} } } \\
&& \Yob \times \Bob ,    }
\qquad  \qquad
\xymatrix{
\Xob \times \Aob   \ar[rr]^{\alpha \times \beta} \ar[rrd]_{\psi\sped{0} \times \eta\sped{0} }  &  & \Ymo \times \Bmo \ar[d]^{\mathsf{t}\ped{\Cat{Y} \times \Cat{B} } } \\
&& \Yob \times \Bob     }
\]
\[ 
\xymatrix@R=30pt{
\Xmo \times \Amo \ar[rrrrr]^{\Delta\left({\psi\sped{1} \times \eta\sped{1}, \left({\alpha \times \beta}\right) \mathsf{s}\ped{\Cat{X} \times \Cat{A} } }\right) } \ar[d]|{\Delta\left({\left({\alpha \times \beta}\right) \mathsf{t}\ped{\Cat{X} \uplus \Cat{A} }   , \varphi\sped{1} \times \varepsilon\sped{1} }\right) } &&&&& \left({\Ymo \times \Bmo}\right)\sped{2}   \ar[d]|{m\ped{\Cat{Y} \times \Cat{B} } }  \\ 
\left({\Ymo \times \Bmo}\right)\sped{2}  \ar[rrrrr]^{m\ped{\Cat{Y} \times \Cat{B} } } &&&&& \Ymo \times \Bmo .
}
\]
are  commutative. As for the commutativity of the two triangular diagrams is  obvious because \(\mathsf{s}\ped{\Cat{Y} \times \Cat{B} } = \mathsf{s}\ped{\Cat{Y} } \times \mathsf{s}\ped{ \Cat{B} }\) and \(\mathsf{t}\ped{\Cat{Y} \times \Cat{B} } = \mathsf{t}\ped{\Cat{Y} } \times \mathsf{t}\ped{ \Cat{B} }\).
Regarding the commutativity of the third one, we calculate, for each \((x,a) \in \Xmo \times \Amo\),
\[ \begin{aligned}
& \quad m\ped{\Cat{Y} \times \Cat{B} } \Delta\left({\psi\sped{1} \times \eta\sped{1}, \left({\alpha \times \beta}\right) \mathsf{s}\ped{\Cat{X} \times \Cat{A} } }\right) (x,a)  
= m\ped{\Cat{Y} \times \Cat{B} } \Big(  \left({\psi\sped{1}(x), \eta\sped{1}(a)}\right), \left({\alpha \mathsf{s}\ped{\Cat{X} }(x), \beta \mathsf{s}\ped{\Cat{A} }(a)}\right) \Big)  \\
&= \Big( m\ped{\Cat{Y} }\left({\psi\sped{1} (x), \alpha \mathsf{s}\ped{\Cat{X} }(x)}\right), m\ped{\Cat{B} }\left({\eta\sped{1}(a), \beta \mathsf{s}\ped{\Cat{A} }(a)}\right)  \Big) 
= \Big( m\ped{\Cat{Y} }\left({\alpha \mathsf{t}\ped{\Cat{X} }(x), \varphi\sped{1}(x) }\right), m\ped{\Cat{B} }\left({\beta \mathsf{s}\ped{\Cat{A} }(a), \varepsilon\sped{1}(a) }\right)  \Big) \\
&= m\ped{\Cat{Y} \times \Cat{B} }  \Big( \left({\alpha \mathsf{t}\ped{\Cat{X} }(x), \beta \mathsf{t}\ped{\Cat{A} }(a)}\right), \left({\varphi\sped{1}(a), \varepsilon\sped{1}(a)}\right) \Big) 
= m\ped{\Cat{Y} \times \Cat{B} } \Delta\left({\left({\alpha \times \beta}\right) \mathsf{t}\ped{\Cat{X} \times \Cat{A} }   , \varphi\sped{1} \times \varepsilon\sped{1} }\right) (x,a)
\end{aligned} 
\]
and this finishes the proof that $\alpha \times \beta$ is a 2-morphism.

\begin{lemma}\label{lema:Times}
Let $\alpha, \alpha': \varphi \to \psi$ and $\beta, \beta': \varepsilon \to \eta$  as above. Then, we have
\[ 
\left({\alpha' \times \beta'}\right) \left({\alpha \times \beta}\right) = \left({\alpha'\alpha}\right) \times \left({\beta' \beta}\right) \colon \varphi \times \varepsilon \longrightarrow \psi \times \eta
\qquad \text{and} \qquad
\id{\varphi \times \varepsilon} = \id{\varphi} \times \id{\varepsilon}.
\]
\end{lemma}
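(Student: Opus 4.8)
The plan is to follow the proof of Lemma~\ref{lema:uplus}, reducing both identities to pointwise computations in $\Sets$. First I would make explicit the vertical composition of internal natural transformations and the identity $2$-morphism in $\CRset{G}$: for composable $2$-morphisms $\alpha\colon \varphi \to \psi$ and $\alpha'\colon \psi \to \psi'$, the composite $\alpha'\alpha\colon \varphi \to \psi'$ is the $G$-equivariant map sending $x \in \Xob$ to $m\ped{\Cat{Y}}\big(\alpha'(x),\alpha(x)\big)$ (the two arrows being composable because $\mathsf{t}\ped{\Cat{Y}}\alpha(x) = \psi\sped{0}(x) = \mathsf{s}\ped{\Cat{Y}}\alpha'(x)$), while the identity $2$-morphism $\id{\varphi}$ sends $x$ to $\iota\ped{\Cat{Y}}\big(\varphi\sped{0}(x)\big)$; the analogous formulas hold for $\beta,\beta'$ with $\Cat{B}$ in place of $\Cat{Y}$.

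For the first identity I would evaluate both sides of $(\alpha'\times\beta')(\alpha\times\beta) = (\alpha'\alpha)\times(\beta'\beta)$ on an arbitrary $(x,a) \in \Xob \times \Aob$. Using the definition $(\alpha\times\beta)(x,a) = (\alpha(x),\beta(a))$ introduced just before the lemma, the componentwise formula for $m\ped{\Cat{Y}\times\Cat{B}}$ recalled in Section~\ref{secMonStruc}, and the description of vertical composition above, the left-hand side at $(x,a)$ unwinds to $\big(m\ped{\Cat{Y}}(\alpha'(x),\alpha(x)),\, m\ped{\Cat{B}}(\beta'(a),\beta(a))\big)$, which is exactly $\big((\alpha'\alpha)(x),(\beta'\beta)(a)\big)$, i.e.\ the right-hand side at $(x,a)$. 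Since we already know from the paragraph preceding the lemma that both are $2$-morphisms $\varphi\times\varepsilon \to \psi\times\eta$, this pointwise agreement yields the claimed equality.

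For the second identity I would argue in the same fashion, now using the componentwise formula for $\iota\ped{\Cat{Y}\times\Cat{B}}$ and $(\varphi\times\varepsilon)\sped{0} = \varphi\sped{0}\times\varepsilon\sped{0}$: evaluating $\id{\varphi}\times\id{\varepsilon}$ at $(x,a)$ gives $\big(\iota\ped{\Cat{Y}}(\varphi\sped{0}(x)),\,\iota\ped{\Cat{B}}(\varepsilon\sped{0}(a))\big) = \iota\ped{\Cat{Y}\times\Cat{B}}\big((\varphi\times\varepsilon)\sped{0}(x,a)\big) = \id{\varphi\times\varepsilon}(x,a)$.

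I do not expect a real obstacle: exactly as for Lemma~\ref{lema:uplus}, the whole point is that all structure morphisms of the product $\Cat{Y}\times\Cat{B}$ are computed componentwise, so both claims collapse to the componentwise descriptions of $\times$ already set up in Section~\ref{secMonStruc}. The only thing requiring a little care is unwinding the vertical composite $\alpha'\alpha$ correctly in terms of $m\ped{\Cat{Y}}$ (not to be confused with the horizontal composite) and keeping the order of its two arguments straight; once that is fixed, the computation is routine.
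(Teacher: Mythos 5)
Your proposal is correct and follows essentially the same route as the paper: both identities are verified by evaluating at an arbitrary $(x,a) \in \Xob \times \Aob$ and using the componentwise descriptions of $m\ped{\Cat{Y}\times\Cat{B}}$ and $\iota\ped{\Cat{Y}\times\Cat{B}}$, together with the formula $(\alpha'\alpha)(x) = m\ped{\Cat{Y}}(\alpha'(x),\alpha(x))$ for vertical composition. Your explicit check of composability of $\alpha'(x)$ and $\alpha(x)$ is a small welcome addition the paper leaves implicit.
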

\begin{proof}
Take an object  \((x,a) \in (\cX \times \cA)_{\Sscript{0}}= \Xob \times \Aob\), then  we have
\[ \begin{aligned}
(\id{\varphi} \times \id{\varepsilon}) (x,a) &= \id{\varphi}(x) \times \id{\varepsilon}(a)
= \iota\sped{\varphi\sped{0}(x)} \times \iota\sped{\varepsilon\sped{0} (a)} 
= \iota\sped{\varphi\sped{0}(x) \times \varepsilon\sped{0}(a)} 
= \iota\sped{(\varphi\sped{0} \times \varepsilon\sped{0}) (x,a) }
= \id{\varphi \times \varepsilon}(x,a).
\end{aligned}
\]
This gives the second stated equality. As for the first one,  we have 
\[ \begin{aligned}
& \quad \left({\alpha' \times \beta'}\right) \left({\alpha \times \beta}\right) (x,a) = m\ped{\Cat{Y} \times \Cat{B} }  \Delta\left({ \alpha' \times \beta', \alpha \times \beta }\right)(x,a) \\
&= m\ped{\Cat{Y} \times \Cat{B} } \left({ \left({\alpha' \times \beta' }\right)(x,a), \left({\alpha \times \beta }\right)(x,a) }\right) 
= m\ped{\Cat{Y} \times \Cat{B} }  \Big( \left({\alpha'(x), \beta'(a)}\right), \left({\alpha(x), \beta(a)}\right) \Big)  \\
&= \Big( m\ped{\Cat{Y} }\left({\alpha'(x), \alpha(x)}\right), m\ped{\Cat{B} }\left({\beta'(a), \beta(a)}\right) \Big) 
= \Big( \left({\alpha' \alpha}\right)(x), \left({\beta' \beta}\right)(a) \Big)
= \left({\left({\alpha' \alpha}\right) \times \left({\beta' \beta}\right) }\right)(x,a).
\end{aligned}
\]
Thus, $\left({\alpha' \times \beta'}\right) \left({\alpha \times \beta}\right) = \left({\alpha'\alpha}\right) \times \left({\beta' \beta}\right)$ and this finishes the proof. 
\end{proof}

Next, we give the main definition of this section.

\begin{definition}\label{dWeakEquiv}
Let be \(\Cat{X}, \Cat{Y}  \in \CRset{G}\).
We say that \(\Cat{X}\) and \(\Cat{Y}\) are \imp{weakly equivalent} and we write \(\Cat{X}  \wea \Cat{Y}\) if there are,   in the category \(\CRset{G}\),  morphisms  \(\varphi \colon \Cat{X} \longrightarrow \Cat{Y}\), \(\psi \colon \Cat{Y} \longrightarrow \Cat{X}\) and \(2\)-isomorphisms \(\alpha \colon \psi \varphi  \longrightarrow \id{\Cat{X}}\), \(\beta \colon \varphi \psi \longrightarrow \id{\Cat{Y}}\), in the sense of Definition  \ref{def:SiSets}.
\end{definition}

The following lemma states that the disjoint union and the fibre product are compatible with the weak equivalence relation. A compatibility criterion that will be used in the sequel. 

\begin{proposition}\label{prop:PusTime}
Let \(\Cat{X}, \Cat{Y}, \Cat{A}\) and \(\Cat{B}\)  be objects in the category \(\CRset{G}\) such that \(\Cat{X} \wea \Cat{Y}\) and \(\Cat{A} \wea \Cat{B}\).
Then, we have the following weak equivalences relations:
\[ \Big[ \Cat{X} \Uplus \Cat{A} \Big] \wea \Big[ \Cat{Y} \Uplus \Cat{B}  \Big]
\qquad \text{and} \qquad
 \Big[ \Cat{X} \times  \Cat{A} \Big] \wea \Big[ \Cat{Y} \times  \Cat{B}  \Big].
\] 
\end{proposition}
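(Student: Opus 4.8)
The plan is to produce the data witnessing both weak equivalences directly from the data witnessing $\Cat{X}\wea\Cat{Y}$ and $\Cat{A}\wea\Cat{B}$, and then to conclude using Lemmas~\ref{lema:uplus} and~\ref{lema:Times}, which say precisely that $\Uplus$ and $\times$ are compatible with composition and with identities of $2$-morphisms.

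First I would fix, by Definition~\ref{dWeakEquiv}, morphisms $\varphi\colon\Cat{X}\to\Cat{Y}$, $\psi\colon\Cat{Y}\to\Cat{X}$ and $2$-isomorphisms $\alpha\colon\psi\varphi\to\id{\Cat{X}}$, $\beta\colon\varphi\psi\to\id{\Cat{Y}}$, together with morphisms $\varepsilon\colon\Cat{A}\to\Cat{B}$, $\eta\colon\Cat{B}\to\Cat{A}$ and $2$-isomorphisms $\gamma\colon\eta\varepsilon\to\id{\Cat{A}}$, $\delta\colon\varepsilon\eta\to\id{\Cat{B}}$. By Remark~\ref{lOrdinaryCatTheory}, $2$-morphisms in $\CRset{G}$ compose as ordinary natural transformations, so $\alpha,\beta,\gamma,\delta$ admit vertical inverses $\alpha^{-1},\beta^{-1},\gamma^{-1},\delta^{-1}$. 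For the coproduct I would take $\varphi\Uplus\varepsilon\colon\Cat{X}\Uplus\Cat{A}\to\Cat{Y}\Uplus\Cat{B}$ and $\psi\Uplus\eta\colon\Cat{Y}\Uplus\Cat{B}\to\Cat{X}\Uplus\Cat{A}$ as the candidate pair. Since $-\Uplus-$ acts degreewise as the coproduct functor on $\rset{G}$, it preserves composition and identities of morphisms; hence $(\psi\Uplus\eta)(\varphi\Uplus\varepsilon)=(\psi\varphi)\Uplus(\eta\varepsilon)$, $(\varphi\Uplus\varepsilon)(\psi\Uplus\eta)=(\varphi\psi)\Uplus(\varepsilon\eta)$, and $\id{\Cat{X}}\Uplus\id{\Cat{A}}=\id{\Cat{X}\Uplus\Cat{A}}$, $\id{\Cat{Y}}\Uplus\id{\Cat{B}}=\id{\Cat{Y}\Uplus\Cat{B}}$. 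It follows that $\alpha\Uplus\gamma$ is a $2$-morphism from $(\psi\Uplus\eta)(\varphi\Uplus\varepsilon)$ to $\id{\Cat{X}\Uplus\Cat{A}}$, and $\beta\Uplus\delta$ a $2$-morphism from $(\varphi\Uplus\varepsilon)(\psi\Uplus\eta)$ to $\id{\Cat{Y}\Uplus\Cat{B}}$ (that a disjoint union of $2$-morphisms is again a $2$-morphism was already verified just before Lemma~\ref{lema:uplus}).

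It then remains to see these are $2$-\emph{iso}morphisms. By Lemma~\ref{lema:uplus}, $(\alpha^{-1}\Uplus\gamma^{-1})(\alpha\Uplus\gamma)=(\alpha^{-1}\alpha)\Uplus(\gamma^{-1}\gamma)=\id{\psi\varphi}\Uplus\id{\eta\varepsilon}=\id{(\psi\varphi)\Uplus(\eta\varepsilon)}$, and in the other order $(\alpha\Uplus\gamma)(\alpha^{-1}\Uplus\gamma^{-1})=\id{\id{\Cat{X}}}\Uplus\id{\id{\Cat{A}}}=\id{\id{\Cat{X}\Uplus\Cat{A}}}$; thus $(\alpha\Uplus\gamma)^{-1}=\alpha^{-1}\Uplus\gamma^{-1}$, and likewise $(\beta\Uplus\delta)^{-1}=\beta^{-1}\Uplus\delta^{-1}$. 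By Definition~\ref{dWeakEquiv} this proves $\Cat{X}\Uplus\Cat{A}\wea\Cat{Y}\Uplus\Cat{B}$. For the product I would repeat the argument verbatim with $\times$ in place of $\Uplus$: the functor $-\times-$ of Section~\ref{secMonStruc} preserves composition and identities, the maps $\alpha\times\gamma$ and $\beta\times\delta$ are $2$-morphisms by the computation performed just before Lemma~\ref{lema:Times}, and Lemma~\ref{lema:Times} gives $(\alpha^{-1}\times\gamma^{-1})(\alpha\times\gamma)=\id{(\psi\varphi)\times(\eta\varepsilon)}$ and $(\alpha\times\gamma)(\alpha^{-1}\times\gamma^{-1})=\id{\id{\Cat{X}\times\Cat{A}}}$, so $\alpha\times\gamma$ and $\beta\times\delta$ are $2$-isomorphisms and $\Cat{X}\times\Cat{A}\wea\Cat{Y}\times\Cat{B}$.

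I expect the only point needing care to be the bookkeeping identities such as $(\psi\Uplus\eta)(\varphi\Uplus\varepsilon)=(\psi\varphi)\Uplus(\eta\varepsilon)$ and $\id{\Cat{X}}\Uplus\id{\Cat{A}}=\id{\Cat{X}\Uplus\Cat{A}}$ (together with their $\times$-analogues): one must check that the source and target of $\alpha\Uplus\gamma$, $\beta\Uplus\delta$, $\alpha\times\gamma$ and $\beta\times\delta$ are \emph{literally} the relevant composite and identity functors, not merely functors isomorphic to them, so that these $2$-morphisms have exactly the form required by Definition~\ref{dWeakEquiv}. Each such identity, however, is checked degree by degree and reduces to an elementary fact about coproducts and products in $\rset{G}$, so there is no serious obstacle; everything beyond that is an immediate application of Lemmas~\ref{lema:uplus} and~\ref{lema:Times}.
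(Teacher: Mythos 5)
Your proposal is correct and follows essentially the same route as the paper's own proof: exhibit $\varphi\Uplus\varepsilon$ and $\psi\Uplus\eta$ (resp.\ their $\times$-analogues) as the witnessing functors, and use Lemma~\ref{lema:uplus} (resp.\ Lemma~\ref{lema:Times}) to show that $\alpha\Uplus\gamma$ and $\beta\Uplus\delta$ are $2$-isomorphisms with inverses $\alpha^{-1}\Uplus\gamma^{-1}$ and $\beta^{-1}\Uplus\delta^{-1}$. Your explicit attention to the bookkeeping identities such as $(\psi\Uplus\eta)(\varphi\Uplus\varepsilon)=(\psi\varphi)\Uplus(\eta\varepsilon)$ is a point the paper leaves implicit, but it changes nothing in substance.
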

\begin{proof}
Attached  to the stated weak equivalences relations, there are morphisms in \(\CRset{G}\) 
\[ 
\varphi \colon \Cat{X} \longrightarrow \Cat{Y}, 
\qquad
 \psi \colon \Cat{Y} \longrightarrow \Cat{X},
\qquad
\eta \colon \cA \longrightarrow \Cat{B} 
\qquad \text{and} \qquad
 \varepsilon \colon \cB \longrightarrow \Cat{A}
\]
such that there are \(2\)-isomorphisms in \(\CRset{G}\)
\[  \alpha \colon \psi \varphi \longrightarrow \id{\Cat{X} } 
\qquad
 \beta \colon \varphi \psi \longrightarrow \id{\Cat{Y} } 
\qquad
 \gamma \colon \varepsilon \eta \longrightarrow \id{\Cat{A} } 
\qquad \text{and} \qquad
 \delta \colon \eta \varepsilon \longrightarrow \id{ \Cat{B} }. 
\]
Applying Lemma \ref{lema:uplus}, we get
\[  \left({\alpha^{-1} \Uplus \gamma^{-1}}\right) \left({\alpha \Uplus \gamma}\right) = \left({\alpha^{-1} \alpha}\right) \Uplus \left({\gamma^{-1} \gamma}\right) 
= \id{\psi \varphi} \Uplus \id{\varepsilon \eta} 
= \id{\psi \varphi \uplus \varepsilon \eta} 
\]
and
\[ \begin{aligned}
& \quad \left({\alpha \Uplus \gamma}\right) \left({\alpha^{-1} \Uplus \gamma^{-1}}\right) = \left({\alpha\alpha^{-1}}\right) \Uplus \left({\gamma \gamma^{-1} }\right) 
= \id{\id{\Cat{X}  } \uplus \id{\Cat{Y}  } } 
= \id{\id{\Cat{X} \uplus \Cat{Y} } }. 
\end{aligned}
\]
Thus
\[ 
\alpha \Uplus \gamma \colon \psi \varphi \Uplus \varepsilon \eta \longrightarrow \id{\Cat{X} \uplus \Cat{A} } 
\]
is a \(2\)-isomorphism in \(\CRset{G}\).
We can prove in the same way that \( \beta \Uplus \delta \colon \varphi \psi \Uplus \eta \varepsilon \longrightarrow \id{\Cat{Y} \uplus \Cat{B} }\) is a \(2\)-isomorphism.  As a consequence, we obtain
\[ \Big[ \Cat{X} \Uplus \Cat{A} \Big] \wea \Big[ \Cat{Y} \Uplus \Cat{B}  \Big].
\]
Lastly, an analogue computation, using this time Lemma~\ref{lema:Times}, shows the relation $\Big[ \Cat{X} \times  \Cat{A} \Big] \wea \Big[ \Cat{Y} \times  \Cat{B}  \Big]$, and this completes the proof. 
\end{proof}

As already noted in Remark~\(\ref{lOrdinaryCatTheory}\), many concept of ordinary category theory can be extended to  categorified  \(G\)-sets.
With reference to~\cite[Pag.~51]{AbstrConcCatJoyCats}, we will now briefly explain how to extend the concept of skeleton of a category.
The proof are essentially the same thus we will omit them.

\begin{definition}
Let be \(\Cat{X}\) and \(\Cat{Y}\) categorified \(G\)-sets.
\begin{enumerate}
\item We say that \(\Cat{Y}\) is a \imp{categorified \(G\)-subset} of \(\Cat{X}\) if the following two conditions are satisfied:
\begin{enumerate}
\item \(\Yob\) and \(\Ymo\) are \(G\)-subset respectively of \(\Xob\) and \(\Xmo\);
\item the structure on \(\Cat{Y}\) is appropriately induced by that on \(\Cat{X}\) by restriction, in the usual sense for subcategories.
\end{enumerate}
\item We say that \(\Cat{Y}\) is a \imp{full  categorified \(G\)-subset} of \(\Cat{X}\) if it is a  categorified \(G\)-subset of \(\Cat{X}\) such that, for each \(a, b \in \Yob\), we have \(\Cat{Y}(a,b) = \Cat{X}(a,b)\).

\item We say that \(\Cat{Y}\) is an \imp{isomorphism-dense  categorified \(G\)-subset} of \(\Cat{X}\) if it is a categorified \(G\)-subset of \(\Cat{X}\) such that, for each \(a \in \Yob\), there is \(b \in \Xob\) such that there is an isomorphism \(f \in \Xmo\), with \(\mathsf{s}\ped{\Cat{X} }(f)= a\) and \(\mathsf{t}\ped{\Cat{X} }(f)= a\).
\end{enumerate}
\end{definition}

\begin{definition}\label{def:Skeleton}
The \imp{skeleteon of a  categorified \(G\)-set} is a full, isomorphism-dense categorified \(G\)-subset in which no two distinct objects are isomorphic.
\end{definition}

Direct consequences of this definition are the following facts:

\begin{proposition}
\label{cWeakEqIsom}
Let $G$ be a group. Then:
\begin{enumerate}
\item Every categorified \(G\)-set has a skeleton.
\item Two skeletons of a  categorified \(G\)-set \(\Cat{X}\) are isomorphic.
\item Every skeleton of a categorified \(G\)-set \(\Cat{X}\) is weakly equivalent to \(\Cat{X}\).
\end{enumerate}
In particular, two  categorified \(G\)-sets \(\Cat{X}\) and \(\Cat{Y}\) are weakly equivalent if and only their skeletons are isomorphic as categorified \(G\)-sets.
\end{proposition}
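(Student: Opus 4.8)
The plan is to run, step by step, the classical construction of a skeleton, the only non-formal point being that every choice involved has to be made $G$-equivariantly. First I record the relevant bookkeeping. On the object $G$-set $\Xob$ of a categorified $G$-set $\Cat{X}$, put $a\sim b$ whenever $\Cat{X}(a,b)$ contains an isomorphism; this is an equivalence relation by Remark~\ref{lOrdinaryCatTheory}, and it is $G$-invariant because, as noted around~\eqref{Eq:circ}, a morphism $p$ is invertible exactly when $pg$ is and $m\ped{\Cat{X}}$ is $G$-equivariant: an isomorphism $f\colon a\to b$ yields an isomorphism $fg\colon ag\to bg$. Hence $G$ acts on $\Xob/\!\sim$, and the whole problem reduces to producing a $G$-stable set $R\subseteq\Xob$ meeting every $\sim$-class exactly once. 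Given such an $R$, I take $\Cat{S}$ to be the full sub-categorified-$G$-set carried by $R$: $\Cat{S}\sped{0}=R$ and $\Cat{S}\sped{1}=\mathsf{s}\ped{\Cat{X}}^{-1}(R)\cap\mathsf{t}\ped{\Cat{X}}^{-1}(R)$ are $G$-subsets, since $\mathsf{s}\ped{\Cat{X}},\mathsf{t}\ped{\Cat{X}}$ are equivariant and $R$ is $G$-stable, and $\iota\ped{\Cat{X}},\mathsf{s}\ped{\Cat{X}},\mathsf{t}\ped{\Cat{X}},m\ped{\Cat{X}}$ restrict to $\Cat{S}$ (for $m\ped{\Cat{X}}$, the source and target of $m\ped{\Cat{X}}(p,q)$ are $\mathsf{s}\ped{\Cat{X}}(q)$ and $\mathsf{t}\ped{\Cat{X}}(p)$, hence lie in $R$). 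By construction $\Cat{S}$ is full, isomorphism-dense and skeletal, i.e.\ a skeleton of $\Cat{X}$ in the sense of Definition~\ref{def:Skeleton}; this settles~(1).

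For~(2) and~(3) I would use transport of structure along a chosen family of isomorphisms. Given skeletons $\Cat{S},\Cat{S}'$, set $\Phi\sped{0}(a)$ to be the unique object of $\Cat{S}'$ isomorphic to $a$ in $\Cat{X}$ (it exists by isomorphism-density and is unique by skeletality), pick isomorphisms $u_{a}\colon a\to\Phi\sped{0}(a)$ in $\Xmo$, and set $\Phi\sped{1}(f)=u_{\mathsf{t}\ped{\Cat{X}}(f)}\circ f\circ u_{\mathsf{s}\ped{\Cat{X}}(f)}^{-1}$. A direct check gives the diagrams of Definition~\ref{dIntFun}; if moreover $u_{ag}=u_{a}\,g$ for all $g\in G$, then $\Phi\sped{0},\Phi\sped{1}$ are $G$-equivariant, so $\Phi$ is a morphism in $\CRset{G}$, and the family $\{u_{a}^{-1}\}$ produces a two-sided inverse, whence $\Cat{S}\cong\Cat{S}'$ in $\CRset{G}$. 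For~(3), with $\Cat{S}$ a skeleton of $\Cat{X}$ and $R$ as above, let $\iota\colon\Cat{S}\hookrightarrow\Cat{X}$ be the inclusion and define $r\colon\Cat{X}\to\Cat{S}$ by the same recipe, using a $G$-equivariant family $v_{a}\colon a\to r\sped{0}(a)$ with $v_{a}$ an identity for $a\in R$. Then $r\iota=\id{\Cat{S}}$ on the nose, while $a\mapsto v_{a}^{-1}$ is a morphism $\Xob\to\Xmo$ which is $G$-equivariant (because the family is) and, as one checks, fulfils the three diagrams of Definition~\ref{dIntNatTra}, thus defining a $2$-isomorphism $\iota r\longrightarrow\id{\Cat{X}}$. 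Hence $\Cat{S}\wea\Cat{X}$ in the sense of Definition~\ref{dWeakEquiv}.

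It remains to deduce the final equivalence. One checks first that $\wea$ is an equivalence relation: reflexivity and symmetry are immediate from Definition~\ref{dWeakEquiv}, and transitivity follows by composing the two internal functors and pasting the $2$-isomorphisms, using whiskering by identities together with vertical composition of internal natural transformations (see~\cite[Pag.~498]{BaezCransHDimAlgVILiAlg}), a composite of $2$-isomorphisms being again invertible. Then ``skeletons isomorphic $\Rightarrow$ weakly equivalent'' is immediate from~(3): $\Cat{X}\wea\Cat{S}_{\Cat{X}}\cong\Cat{S}_{\Cat{Y}}\wea\Cat{Y}$. Conversely, if $\Cat{X}\wea\Cat{Y}$ then $\Cat{S}_{\Cat{X}}\wea\Cat{X}\wea\Cat{Y}\wea\Cat{S}_{\Cat{Y}}$, so it is enough to see that a weak equivalence between two skeletons is an isomorphism in $\CRset{G}$: passing through the forgetful functor to ordinary categories of Remark~\ref{lOrdinaryCatTheory} turns it into an equivalence of skeletal categories, which is bijective on objects (by skeletality) and fully faithful, hence an isomorphism of the underlying categories; as its two components are $G$-equivariant bijections, the inverse is again a morphism in $\CRset{G}$. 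Therefore $\Cat{S}_{\Cat{X}}\cong\Cat{S}_{\Cat{Y}}$, completing the argument.

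The step I expect to be the real obstacle is exactly what is hidden behind ``$G$-equivariantly'' above: building a $G$-stable transversal $R$ for $\sim$ in~(1), and $G$-equivariant families $\{u_{a}\}$, $\{v_{a}\}$ of isomorphisms in~(2)--(3). A bare appeal to the axiom of choice, as in the classical case, does not hand these over for free; one is forced to argue one $G$-orbit of $\Xob/\!\sim$ at a time and, inside a chosen class, to pin down an object invariant under the stabilizer of that class before propagating the choice along the $G$-action, and it is this reduction---not the diagram chases---that carries the weight of the proof.
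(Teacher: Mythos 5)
The paper offers no argument here (its proof reads ``It is immediate''), so there is nothing to compare line by line; your plan is the natural equivariant version of the classical skeleton construction, and most of it is sound: restricting the structure maps to the full subcategory on a $G$-stable set of objects, the transport-of-structure argument for (2) and (3) given $G$-equivariant families $\{u_a\}$, $\{v_a\}$, the verification that $\wea$ is an equivalence relation, and the observation that a weak equivalence between skeletal categorified $G$-sets is an isomorphism in $\CRset{G}$ are all correct as conditional statements.

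The difficulty is the step you yourself single out as the crux and leave open: the existence of a $G$-stable transversal $R$ for the isomorphism relation on $\mathcal{X}_{0}$, equivalently, in each isomorphism class, of an object fixed by the stabilizer of that class. This is not merely a delicate choice problem --- it fails in general, so the gap cannot be closed. Take $G=\mathbb{Z}/2=\{1,\sigma\}$ and let $\mathcal{X}$ be the indiscrete category on $\mathcal{X}_{0}=\{a,b\}$, so that $\mathcal{X}_{1}=\{\mathrm{id}_a,\mathrm{id}_b,f,f^{-1}\}$ with $f\colon a\to b$, and let $\sigma$ swap $a$ and $b$, hence $\mathrm{id}_a\leftrightarrow\mathrm{id}_b$ and $f\leftrightarrow f^{-1}$; all structure maps are $G$-equivariant, so this is a categorified $\mathbb{Z}/2$-set. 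Its only nonempty $G$-stable set of objects is $\{a,b\}$ itself, and $a\cong b$, so $\mathcal{X}$ admits no full, isomorphism-dense categorified $G$-subset with no two distinct isomorphic objects: part (1) of Proposition~\ref{cWeakEqIsom} fails for this $\mathcal{X}$ if Definition~\ref{def:Skeleton} is read literally. (One can also check that this $\mathcal{X}$ is not weakly equivalent to any categorified $G$-set with a single object, since a one-object categorified $G$-set carries the trivial action and there is no equivariant map into the free orbit $\{a,b\}$.) Consequently your conditional proof of (2), (3) and the final equivalence cannot be completed as written: one must either add a hypothesis guaranteeing a fixed representative in each isomorphism class, or weaken the notion of skeleton so that it is not required to be a categorified $G$-subset of $\mathcal{X}$ (for instance by transporting the $G$-action along chosen isomorphisms). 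You deserve credit for locating the obstacle exactly where it lies, but as it stands the proposal does not prove the statement --- and no argument can, without modifying the statement or the definition of skeleton.
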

\begin{proof}
It is immediate.
\end{proof}

\begin{remark}
Thanks to Proposition~\(\ref{cWeakEqIsom}\), given a categorified \(G\)-set \(\Cat{X}\), we can denote one of its skeletons by \(\Ske\left(\Cat{X}\right)\), with the specification that \(\Ske\left(\Cat{X}\right)\) is unique up to isomorphism.
If there is a family \(\left(\Cat{X}_\alpha\right)_{\alpha \in A}\) of categorified \(G\)-subsets of \(\Cat{X}\) such that \(\Cat{X} = \biguplus_{\alpha \in A} \Cat{X}_\alpha\), then we clearly have the following isomorphism of categorified \(G\)-set:
\[ \Ske\left(\Cat{X}\right) = \Ske\left( \biguplus_{\alpha \in A} X_\alpha \right) \cong \biguplus_{\alpha \in A} \Ske\left(\Cat{X}_\alpha\right).
\]
\end{remark}

\begin{definition}
Given a categorified \(G\)-set \(\Cat{X}\), we denote with \(\dCat\left(\Cat{X}\right)\) the disjoint union of the connected components of \(\Cat{X}\) whose skeleton is discrete, that is, has only identities as arrows.
On the other hand, we denote with \(\ndCat\left(\Cat{X}\right)\) the disjoint union of the connected components of \(\Cat{X}\) whose skeleton is not discrete, that is, has at least an arrow that is not an identity.
As a consequence we have \(\Cat{X} = \dCat\left(\Cat{X}\right)  \Uplus \ndCat\left(\Cat{X}\right)\).
\end{definition}


\section{Examples of weak equivalences}\label{secExCatGSets}
In this section we deal with a class of examples of categorified group-sets, and we give a certain criterion of the weak equivalence relation between the objects of this class.  All group-sets are right ones.

Let \(\Cat{C}\) be a small category, \(G\) a group, and \(X\) a  \(G\)-set.
We set \(\Xob = \Cat{C}\sped{0} \times X\) and \(\Xmo = \Cat{C}\sped{1} \times X\).
The $G$-action on the $\Xmoi$'s is defined as \((a,x)g :=(a,xg)\) for each \((a,x) \in \Xmoi\) and \(g \in G\), for \(i=0,1\).
We set
\[ \begin{aligned}{\mathsf{s}\ped{\Cat{X} } }  \colon & {\Xmo} \longrightarrow {\Xob} \\ & {(a,x)}  \longrightarrow {\left({\mathsf{s}\ped{\Cat{C} }(a), g}\right), }\end{aligned} 
\qquad
\begin{aligned}{\mathsf{t}\ped{\Cat{X} } }  \colon & {\Xmo} \longrightarrow {\Xob} \\ & {(a,x)}  \longrightarrow {\left({\mathsf{t}\ped{\Cat{C} }(a), g}\right), }\end{aligned} 
\qquad
\begin{aligned}{\iota\ped{\Cat{X} } }  \colon & {\Xmo} \longrightarrow {\Xob} \\ & {(a,x)}  \longrightarrow {\left({\iota\ped{\Cat{C} }(a), g}\right) }\end{aligned} 
\]
and
\[ \begin{aligned}{m\ped{\Cat{X} } }  \colon & {\Xmod} \longrightarrow {\Xmo} \\ & {\Big((a,x), (b,y) \Big)}  \longrightarrow { \left({m\ped{\Cat{C} }(a,b), x}\right) }\end{aligned} 
\]
with \(\left({\mathsf{s}\ped{\Cat{C}}(a), x}\right) = \mathsf{s}\ped{\Cat{X} }(a,x) = \mathsf{t}\ped{\Cat{X} }(b,y) = \left({\mathsf{t}\ped{\Cat{C} }(b), y}\right)\).
It is evident that \(\mathsf{s}\ped{\Cat{X} }\), \(\mathsf{t}\ped{\Cat{X} }\), \(\iota\ped{\Cat{X}}\) and \(m\ped{\Cat{X} }\) are morphisms of \(G\)-sets thus we just have to prove that the diagrams of Definition~\(\ref{dIntCat}\) about \(\Cat{X}\) are commutative, but this follows immediately by the analogous diagrams about \(\Cat{C}\) (Recall that an ordinary small category, as defined in the footnote of \Cpageref{Fnote:I},  can be considered as an internal category in the category of sets).

Now let \(\Cat{C}\) and \(\Cat{D}\) be small categories, let \(X\) and \(Y\) be  \(G\)-sets and set \(\Xob = \Cat{C}\sped{0} \times X\), \(\Xmo 	= \Cat{C}\sped{1} \times X\), \(\Yob = \Cat{D}\sped{0} \times Y\) and \(\Ymo = \Cat{D}\sped{1} \times Y\).
Let be \(F \colon \Cat{C} \longrightarrow \Cat{D}\) a functor and \(\varphi \colon X \longrightarrow Y\) a morphism in \(\rset{G}\).
We want to define a morphism \((F, \varphi)\) in \(\CRset{G}\) setting \((F, \varphi)\sped{0}= \left({F\sped{0}, \varphi}\right)\) and \((F, \varphi)\sped{1}= \left({F\sped{1}, \varphi}\right)\).
It is evident that \((F, \varphi)\sped{0}\) and \((F, \varphi)\sped{1}\) are morphism of  \(G\)-sets, thus we just have to prove that the diagrams of Definition~\(\ref{dIntFun}\) about \((F, \varphi)\) are commutative, but this follows immediately by the analogous diagrams about \(F\) (an ordinary functor can be considered as an internal functor in the category of sets).

Given another functor \(P \colon \Cat{C} \longrightarrow \Cat{D}\), we consider a natural transformation \(\mu \colon F \longrightarrow P\).
With the notations already introduced, we have a morphism \((P, \varphi) \colon \Cat{X} \longrightarrow \Cat{Y}\) in \(\CRset{G}\) such that \((P, \varphi)\sped{0} = \left({P\sped{0}, \varphi}\right)\) and \((P, \varphi)\sped{1} = \left({P\sped{1}, \varphi}\right)\).
We want to define a \(2\)-morphism \( \left({\mu, \varphi}\right) \colon (F, \varphi) \longrightarrow (P, \varphi)\) given by a morphism of right \(\Cat{G}\)-sets
\[ \begin{aligned}{\left({\mu, \varphi}\right) }  \colon & {\Xob} \longrightarrow {\Ymo} \\ & {(a,x)}  \longrightarrow {(\mu(a), \varphi(x))}\end{aligned} 
\]
therefore, we have to check that the following diagrams commute,
\[ \xymatrix{
\Xob   \ar[rr]^{\left({\mu, \varphi}\right)} \ar[rrd]_{(F, \varphi)\sped{0} }  &  & \Ymo \ar[d]^{\mathsf{s}\ped{\Cat{Y} }} \\
&& \Yob     }
\qquad
\xymatrix{
\Xob   \ar[rr]^{\left({\mu, \varphi}\right)} \ar[rrd]_{(P, \varphi)\sped{0} }  &  & \Ymo \ar[d]^{\mathsf{t}\ped{\Cat{Y} }} \\
&& \Yob     }
\qquad
 \xymatrix{
\Xmo \ar[rrr]^{\Delta \left( (P, \varphi)\sped{1}, \left({\mu, \varphi}\right) \mathsf{s}\ped{\Cat{X} }  \right) } \ar[d]|{\Delta\left({ \left({\mu, \varphi}\right) \mathsf{t}\ped{\Cat{X} }, (F, \varphi)\sped{1} }\right) } &&& \Ymod  \ar[d]^{m\ped{\Cat{Y} } }  \\
\Ymod \ar[rrr]^{m\ped{\Cat{Y} } } &&& \Ymo 
}
\]
but this is just a direct verification.

\begin{remark}
Let be \(\Cat{C}\), \(\Cat{D}\) and \(\Cat{E}\) small categories, \(F \colon \Cat{C} \longrightarrow \Cat{D}\) and \(P \colon \Cat{D} \longrightarrow \Cat{E}\) functors, \(\varphi \colon X \longrightarrow Y\) and \(\psi \colon Y \longrightarrow Z\) morphisms of right \(G\)-sets.
Using the above notation,  we set \(\Xmoi = \Cat{C}\sped{i} \times X\), \(\Ymoi = \Cat{D}\sped{i} \times Y\) and \(\Cat{Z}\sped{i} = \Cat{E}\sped{i} \times Z\) for \(i=0,1\).
It is obvious that \(\id{\Cat{X} } = \left({\id{\Cat{C} }, \id{X} }\right) \colon \Cat{X}  \longrightarrow \Cat{X}\) and \((PF, \psi \varphi) = (P, \psi) (F, \varphi) \colon \Cat{X}  \longrightarrow \Cat{Z}\), are morphisms of categorified $G$-sets.
\end{remark}

\begin{proposition}
\label{rCatNatTransf}
Let be \(\Cat{C}\) and \(\Cat{D}\) small categories, \(X, Y \in \rset{G}\), \(F, P, Q \colon \Cat{C} \longrightarrow \Cat{D}\) functors, \(\varphi \colon X \longrightarrow Y\) a morphism of right \(\Cat{G}\)-sets, \(\mu \colon F \longrightarrow P\) and \(\lambda \colon P \longrightarrow Q\) natural transformations.
We define \(2\)-morphisms
\[ \left({\mu, \varphi}\right)  \colon (F, \varphi) \longrightarrow (P, \varphi),
\qquad
\left({\lambda, \varphi}\right) \colon (P, \varphi) \longrightarrow (Q, \varphi)
\qquad \text{and} \qquad
\left({\lambda \mu, \varphi}\right)  \colon (F, \varphi) \longrightarrow (Q, \varphi)
\]
in \(\CRset{G}\) given by morphisms of right \(G\)-sets
\[ \begin{aligned}{\left({\mu, \varphi}\right) }  \colon & {\Xob} \longrightarrow {\Ymo} \\ & {(a,x)}  \longrightarrow {(\mu(a), \varphi(x)),}\end{aligned}
\qquad
\begin{aligned}{\left({\lambda, \varphi}\right) }  \colon & {\Xob} \longrightarrow {\Ymo} \\ & {(a,x)}  \longrightarrow {(\lambda(x), \varphi(x))}\end{aligned} 
\]
and
\[  \begin{aligned}{\left({\lambda \mu, \varphi}\right) }  \colon & {(F, \varphi)} \longrightarrow {(Q, \varphi)} \\ & {(a,x)}  \longrightarrow {\left({(\lambda \mu)(a), \varphi(x)}\right) .}\end{aligned} 
\]
Then \(\left({\lambda \mu, \varphi}\right) = \left({\lambda, \varphi}\right) \left({\mu, \varphi}\right)\) and \(\left({\id{F}, \varphi }\right)= \id{(F, \varphi)}\).
\end{proposition}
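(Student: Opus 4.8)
The plan is to unwind the recipe for vertical composition of internal natural transformations in $\CRset{G}$ and then to evaluate both sides of each claimed identity on a generic object, exploiting that in the model $\Cat{X} = \Cat{C}\times X$, $\Cat{Y} = \Cat{D}\times Y$ every structure map is computed coordinatewise. First I would recall, following Definition~\ref{dIntNatTra} and~\cite[Pag.~498]{BaezCransHDimAlgVILiAlg}, that for $2$-morphisms $\alpha\colon F\to P$ and $\beta\colon P\to Q$ in $\CRset{G}$ the vertical composite $\beta\alpha\colon F\to Q$ is the morphism of $G$-sets $m\ped{\Cat{Y}}\circ\Delta(\beta,\alpha)\colon\Xob\to\Ymo$, where $\Delta(\beta,\alpha)$ is the arrow into the fibre product $\Ymod$ provided by its universal property. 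For this to be well defined one needs $\Delta(\beta,\alpha)$ to actually factor through $\Ymod$, i.e.\ that $\mathsf{s}\ped{\Cat{Y}}\circ\beta = \mathsf{t}\ped{\Cat{Y}}\circ\alpha$; this is exactly the content of the two triangular diagrams of Definition~\ref{dIntNatTra} applied to $\alpha = (\mu,\varphi)$ and $\beta = (\lambda,\varphi)$, and those in turn reduce to the source/target data of the ordinary natural transformations $\mu,\lambda$ in $\Cat{D}$. The same unwinding identifies $\id{(F,\varphi)}$ with the $2$-morphism $\iota\ped{\Cat{Y}}\circ (F,\varphi)\sped{0}\colon\Xob\to\Ymo$.

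Next I would settle the first identity by a pointwise computation. For $(a,x)\in\Xob = \Cat{C}\sped{0}\times X$ one has $(\lambda,\varphi)(a,x) = (\lambda(a),\varphi(x))$ and $(\mu,\varphi)(a,x) = (\mu(a),\varphi(x))$, and these are composable arrows of $\Cat{D}\times Y$ since $\mathsf{s}\ped{\Cat{D}}(\lambda(a)) = P\sped{0}(a) = \mathsf{t}\ped{\Cat{D}}(\mu(a))$ and the second coordinates agree. Applying $m\ped{\Cat{Y}}$, which on $\Cat{D}\times Y$ acts as $m\ped{\Cat{D}}$ on the first coordinate and as the first projection on the second, gives
\[ \left({\lambda,\varphi}\right)\left({\mu,\varphi}\right)(a,x) = \big(m\ped{\Cat{D}}(\lambda(a),\mu(a)),\,\varphi(x)\big) = \big((\lambda\mu)(a),\,\varphi(x)\big) = (\lambda\mu,\varphi)(a,x), \]
the middle equality being nothing but the definition of the vertical composite $\lambda\mu$ in $\Cat{D}$. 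Since $(\lambda,\varphi)(\mu,\varphi)$ and $(\lambda\mu,\varphi)$ are morphisms of $G$-sets $\Xob\to\Ymo$ with the same underlying map, they coincide. For the unit law I would evaluate $\id{(F,\varphi)}$ at $(a,x)$: using $\iota\ped{\Cat{Y}} = (\iota\ped{\Cat{D}},\id{Y})$ in this model, one gets $\iota\ped{\Cat{Y}}(F\sped{0}(a),\varphi(x)) = (\iota\ped{\Cat{D}}(F\sped{0}(a)),\varphi(x)) = (\id{F}(a),\varphi(x)) = (\id{F},\varphi)(a,x)$, since $\iota\ped{\Cat{D}}(F\sped{0}(a))$ is the value at $a$ of the identity natural transformation of $F$.

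The only genuinely delicate point is the composability check in the first step, namely that the map $\Delta$ forming the vertical composite really targets $\Ymod$, that is, that the source of the $\lambda$-component equals the target of the $\mu$-component; this is immediate from the naturality data of $\mu$ and $\lambda$, but it is the one place where one actually uses the defining diagrams of a $2$-morphism rather than just reading off underlying maps. Everything else is a routine coordinatewise verification, formally identical to the classical proof that vertical composition of natural transformations is associative and unital, so I would present only the two displays above and leave the remaining diagram chases implicit.
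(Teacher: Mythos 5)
Your proposal is correct and follows essentially the same route as the paper: unwind the vertical composite as $m\ped{\Cat{Y}}\circ\Delta\bigl((\lambda,\varphi),(\mu,\varphi)\bigr)$, evaluate pointwise at $(a,x)$ using that $m\ped{\Cat{Y}}$ and $\iota\ped{\Cat{Y}}$ act coordinatewise on $\Cat{D}\times Y$, and reduce both identities to the corresponding facts about $\lambda\mu$ and $\id{F}$ in $\Cat{D}$. Your explicit remark on why $\Delta\bigl((\lambda,\varphi),(\mu,\varphi)\bigr)$ lands in $\Ymod$ is a small, welcome addition that the paper leaves implicit.
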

\begin{proof}
We consider the \(2\)-morphism \(\left({\lambda, \varphi}\right)\left({\mu, \varphi}\right) \colon (F,\varphi) \longrightarrow (Q, \varphi)\) given by a morphism in right \({G}\)-set
\[ \begin{aligned}{r}  \colon & {\Xob} \longrightarrow {\Ymo} \\ & {(a, x)}  \longrightarrow {r(a, x) =  m\ped{\Cat{Y} }\Big( \left({\lambda, \varphi}\right) (a, x), \left({\mu, \varphi}\right)(a, x) \Big) . }\end{aligned} 
\]
For each \((a, x) \in \Xob\), we obtain
\[ \begin{aligned}
r(a, x) &= m\ped{\Cat{Y} }\left( \left({\lambda, \varphi}\right)(a, x), \left({\mu, \varphi}\right)(a, x) \right) 
= m\ped{\Cat{Y} }\left( \left({\lambda(a), \varphi(x)}\right), \left({\mu(a), \varphi(x)}\right) \right) \\
&= \left( m\ped{\Cat{C} }\left({\lambda(a), \mu(a)}\right), \varphi(x) \right)
= \left( (\lambda \mu)(a), \varphi(x) \right)
= \left({\lambda \mu, \varphi}\right)(a, x).
\end{aligned}
\]
The \(2\)-morphism \(\id{(F, \varphi)}\) is given by the morphism of right \({G}\)-sets
\[ \begin{aligned}{z}  \colon & {\Xob} \longrightarrow {\Ymo} \\ & {(a,x)}  \longrightarrow {\iota\ped{\Cat{Y} }\left({F(a), \varphi(x)}\right) = \left( \iota\ped{\Cat{D} }F(a), \varphi(x) \right)}\end{aligned} 
\]
and \(\left({\id{F}, \varphi}\right)\) is given by
\[ \begin{aligned}{\left({\id{F}, \varphi }\right) }  \colon & {\Xob} \longrightarrow {\Ymo} \\ & {(a,x)}  \longrightarrow {\left({\id{F}(a), \varphi(x)}\right)= \left({\iota\ped{\Cat{D} } (F(a)), \varphi(x)}\right). }\end{aligned} 
\]
Thus, we get \(\id{(F, \varphi)} = \left({\id{F}, \varphi }\right)\).
\end{proof}

Consider now \(F \colon \Cat{C} \longrightarrow \Cat{D}\) and \(P \colon \Cat{D} \longrightarrow \Cat{C}\) two functors between small categories. Assume we have  \(\varepsilon \colon FP \longrightarrow \id{\Cat{D} }\) and \(\eta \colon PF \longrightarrow \id{\Cat{C}}\) two natural isomorphisms and \(\varphi \colon X \longrightarrow Y\) an isomorphism of  \(G\)-sets. Keeping the previous notations,  we set \(\Xmoi = \Cat{C}\sped{i} \times X\) and \(\Ymoi = \Cat{D}\sped{i} \times Y\) for \(i=0,1\) and consider the morphisms of categorified \(G\)-sets 
\((F, \varphi) \colon \Cat{X} \longrightarrow \Cat{Y}\) and \((P,  \varphi) \colon \Cat{Y} \longrightarrow \Cat{X}\).
Henceforth, we have that 
\[ \left({P, \varphi^{-1} }\right) \left({F, \varphi}\right) = \left({PF, \varphi^{-1} \varphi }\right)= \left({PF, \id{X} }\right)   , \quad  \id{\Cat{X} } = \left({\id{\Cat{C} }, \id{X} }\right)  \colon  \Cat{X} \longrightarrow \Cat{X} 
\]
and
\[ \left({F, \varphi}\right) \left({P, \varphi^{-1}}\right) = \left({FP, \varphi \varphi^{-1}}\right) = \left({FP, \id{Y} }\right)   , \quad  \id{\Cat{Y} } = \left({\id{\Cat{D} }, \id{Y} }\right)    \colon  \Cat{Y} \longrightarrow \Cat{Y} .
\]
We define \(2\)-morphisms
\[ \left({\varepsilon, \id{X} }\right) \colon (P, \psi)(F, \varphi) \longrightarrow \id{\Cat{X} }
\qquad \text{and} \qquad
\left({\eta, \id{Y} }\right) \colon (F, \varphi)(P, \psi) \longrightarrow \id{\Cat{Y} } 
\]
in \(\CRset{G}\) as the morphisms of  \({G}\)-sets
\[ \begin{aligned}{\left({\varepsilon, \id{X} }\right)}  \colon & {\Xob} \longrightarrow {\Xmo} \\ & {(a, x)}  \longrightarrow {(\varepsilon(a), x)}\end{aligned} 
\qquad \text{and} \qquad
\begin{aligned}{\left({\eta, \id{Y} }\right) }  \colon & {\Yob} \longrightarrow {\Ymo} \\ & {(\beta, y)}  \longrightarrow {(\eta(\beta), y)}\end{aligned} 
\]
respectively.
We have already proved that \(\left({\varepsilon, \id{X} }\right)\) and \(\left({\eta, \id{Y} }\right)\) are well defined.
Considering \(\varepsilon^{-1} \colon \id{\Cat{D} } \longrightarrow FG\) and \(\eta^{-1} \colon \id{\Cat{C} } \longrightarrow GF\) we can construct the \(2\)-morphisms
\[ \left({\varepsilon^{-1}, \id{X} }\right) \colon \id{\Cat{X} }  \longrightarrow (P, \psi)(F, \varphi)
\qquad \text{and} \qquad
\left({\eta^{-1}, \id{Y} }\right) \colon  \id{\Cat{Y} }  \longrightarrow (F, \varphi)(P, \psi) 
\]
in \(\CRset{G}\)
\[ \begin{aligned}{ \left({\varepsilon^{-1}, \id{X} }\right) }  \colon & {\Xob} \longrightarrow {\Xmo} \\ & {(a, x)}  \longrightarrow {(\varepsilon^{-1}(a), x)}\end{aligned} 
\qquad \text{and} \qquad
\begin{aligned}{\left({\eta^{-1}, \id{Y} }\right) }  \colon & {\Yob} \longrightarrow {\Ymo} \\ & {(\beta, y)}  \longrightarrow {(\eta^{-1}(\beta), y)}.\end{aligned} 
\]
We calculate
\[ \id{\left( FP, \id{Y} \right) }  = \left({\id{FP} , \id{Y} }\right) 
= \left({\varepsilon^{-1} \varepsilon, \id{Y} }\right) 
= \left({\varepsilon^{-1}, \id{Y} }\right)  \left({\varepsilon, \id{Y} }\right)
\]
and
\[ \id{\left( \id{\Cat{D} }  , \id{Y} \right) }  = \left({\id{ \id{\Cat{D} } } , \id{Y} }\right) 
= \left({\varepsilon \varepsilon^{-1}, \id{Y} }\right) 
= \left({\varepsilon, \id{Y} }\right)  \left({\varepsilon^{-1}, \id{Y} }\right).
\]
In the same way we prove that
\[ \id{\left( PF, \id{X} \right) } =\left({\eta^{-1}, \id{X} }\right)  \left({\eta, \id{X} }\right)
\qquad \text{and} \qquad
\id{\left({\id{\Cat{C} } , \id{X} }\right) } = \left({\eta, \id{X}}\right)  \left({\eta^{-1}, \id{X} }\right).
\]
Therefore, we have that \(\Cat{X} \wea \Cat{Y}\).

Using the forgetful functor of Remark~\(\ref{lOrdinaryCatTheory}\), we deduce, from the previous argumentations,  that \(\Cat{X}\) and \(\Cat{Y}\) cannot be weakly equivalent if the categories \(\Cat{C}\) and \(\Cat{D}\) are not equivalent. Furthermore, we proved the following result.

\begin{proposition}\label{prop:ClassWE}
Let $G$ be any group. Then we have a functor 
$$
\Sf{SCats} \times \rset{G} \longrightarrow \CRset{G}, \quad  \Big( (\cC,X) \longrightarrow \cC \times X   \Big),
$$
where $\Sf{SCats}$ denotes the category of small categories. Moreover, two categorified $G$-sets of the form $\cC\times X$ and $\cD\times Y$ are weakly equivalent if and only if $X$ and $Y$ are isomorphic two $G$-sets and $\cC$, $\cD$ are equivalent categories. 
\end{proposition}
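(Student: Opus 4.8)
On objects it sends $(\cC,X)$ to the categorified $G$-set $\cC\times X$ constructed at the beginning of this section, and on a morphism $(F,\varphi)$ — with $F$ a functor and $\varphi$ a $G$-equivariant map — to the morphism of $\CRset{G}$ with components $(F\sped{0},\varphi)$ and $(F\sped{1},\varphi)$, already verified above to be a morphism of categorified $G$-sets. Compatibility with identities and with composition is exactly the content of the Remark preceding Proposition~\ref{rCatNatTransf}, so this part is purely formal.

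\textbf{Sufficiency.} For the implication that $X\cong Y$ in $\rset{G}$ and $\cC\simeq\cD$ together force $\cC\times X\wea\cD\times Y$, I would invoke verbatim the construction of the paragraph immediately preceding the statement: picking quasi-inverse functors $F\colon\cC\to\cD$, $P\colon\cD\to\cC$ with natural isomorphisms $\varepsilon\colon FP\Rightarrow\id{\cD}$, $\eta\colon PF\Rightarrow\id{\cC}$ and an isomorphism $\varphi\colon X\to Y$ of $G$-sets, the morphisms $(F,\varphi)$, $(P,\varphi^{-1})$ together with the $2$-morphisms $(\varepsilon,\id{Y})$ and $(\eta,\id{X})$ — which are $2$-isomorphisms by Proposition~\ref{rCatNatTransf}, because $\varepsilon$ and $\eta$ are — witness $\cC\times X\wea\cD\times Y$.

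\textbf{Necessity.} This is the real point. Assume $\cC\times X\wea\cD\times Y$. By Proposition~\ref{cWeakEqIsom} this amounts to an isomorphism $\Ske(\cC\times X)\cong\Ske(\cD\times Y)$ in $\CRset{G}$; since the $G$-action on $\cC\times X$ lives entirely on the second factor, a skeleton may be produced by fixing one skeleton $\cS$ of $\cC$ and using it on every slice $\cC\times\{x\}$, so that $\Ske(\cC\times X)\cong\cS\times X$, and likewise $\Ske(\cD\times Y)\cong\cT\times Y$ for a skeleton $\cT$ of $\cD$. One is thereby reduced to showing that a $G$-equivariant isomorphism of categorified $G$-sets $\cS\times X\cong\cT\times Y$, with $\cS$ and $\cT$ skeletal, forces $X\cong Y$ in $\rset{G}$ and $\cS\cong\cT$ — equivalently $\cC\simeq\cD$, a skeletal category being rigid under equivalence. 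Two complementary invariants feed this: the forgetful functor of Remark~\ref{lOrdinaryCatTheory} turns the datum into an isomorphism of ordinary categories $\biguplus_{x\in X}\cS\cong\biguplus_{y\in Y}\cT$, which pairs the connected components of the two sides into \emph{isomorphic} connected skeletal categories; and the connected-components functor $\CRset{G}\to\rset{G}$ (an easy construction, carrying isomorphisms to isomorphisms of $G$-sets) yields $\pi_{0}(\cS)\times X\cong\pi_{0}(\cT)\times Y$ in $\rset{G}$, with $G$ acting through the $X$- resp.\ $Y$-factor.

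\textbf{The main obstacle.} The step I expect to be the crux is the final bookkeeping: from the matching of connected components together with their isomorphism types and the $G$-set of components, one must peel off an isomorphism $X\cong Y$ in $\rset{G}$ on the one hand and an equivalence $\cC\simeq\cD$ on the other. Here one uses that the $\cC$-direction carries no $G$-action whereas the $X$-direction carries no arrows beyond identities, so that the two invariants above really do ``factor'' the isomorphism $\cS\times X\cong\cT\times Y$; this is where the connectedness and finiteness bookkeeping has to be carried out with care. Once it is done, a functor realizing $\cC\simeq\cD$ and a $G$-equivariant bijection realizing $X\cong Y$ can be read off, and together with the sufficiency direction this finishes the proof; all the remaining steps are formal consequences of the results already established.
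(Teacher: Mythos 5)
Your description of the functor and your sufficiency argument coincide with what the paper actually does: the paper's proof is literally the word ``Straightforward'', with all the content delegated to the constructions of Section~\ref{secExCatGSets}, and in particular the sufficiency direction is exactly the paragraph you cite (quasi-inverse functors $F,P$, natural isomorphisms $\varepsilon,\eta$, an isomorphism $\varphi$ of $G$-sets, assembled into a weak equivalence via Proposition~\ref{rCatNatTransf}). For necessity the paper offers only one sentence --- apply the forgetful functor of Remark~\ref{lOrdinaryCatTheory} --- whereas you correctly sense that this is not enough and try to supplement it with the skeleton reduction and a connected-components functor.

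The gap is in the step you yourself flag as ``the main obstacle'', and it is not merely delicate: it cannot be closed, because the ``only if'' direction of the statement is false as written. Take $G=\one$, let $\cC$ be the discrete category on two objects and $X$ a one-point set, and let $\cD$ be the terminal category and $Y$ a two-point set. Then $\cC\times X$ and $\cD\times Y$ are both the discrete categorified $\one$-set on two points, hence isomorphic and in particular weakly equivalent, yet $X\not\cong Y$ and $\cC\not\simeq\cD$. (For an arbitrary group $G$ take $\cC=\mathbf{1}\Uplus\mathbf{1}$, $\cD=\mathbf{1}$ and $Y=X\Uplus X$.) The two invariants you extract --- the underlying ordinary category $\biguplus_{x\in X}\cS$ and the $G$-set $\pi_{0}(\cS)\times X$ of connected components --- genuinely do not determine the pair $(\cS,X)$, precisely because a disconnected $\cS$ lets components migrate between the ``category direction'' and the ``$G$-set direction''. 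The same defect already sinks the paper's one-line forgetful-functor argument: the forgetful functor only yields an equivalence $\biguplus_{X}\cC\simeq\biguplus_{Y}\cD$ of ordinary categories, from which $\cC\simeq\cD$ does not follow. Your plan does go through if one adds the hypothesis that $\cC$ and $\cD$ are connected and nonempty (then the connected components of $\cS\times X$ are exactly the slices $\cS\times\{x\}$, the $G$-set of components is $X$, and each component is equivalent to $\cC$), so the honest conclusion is that the proposition needs such a hypothesis, not that your bookkeeping was insufficiently careful.
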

\begin{proof}
Straightforward.
\end{proof}

\section{The right double translation category  of a right categorified group-set}\label{sec:TDCats}

Given a group \(G\), let us consider a \(G\)-set \(X\). It is well known (see, for instance, ~\cite{Kaoutit/Spinosa:2018}) that the pair $(X \times G, X)$ admits in a natural way a structure of a groupoid\footnote{This is by definition a small category where each morphism is an isomorphism.}, called  a \imp{right translation groupoid} \(X \rtimes G\). 
Clearly, the nerve of the underlying category creates a categorified set, which is not a categorified group-set, except in some trivial cases.
This translation groupoid illustrates, in fact, the orbits that the group \(G\) creates acting on \(X\), and its isotropy groups coincide with the stabilizers.
The aim of this section is to explore this  construction for the new  categorified \(G\)-sets.
To this end, the notion of double category, introduced for the first time in~\cite{EhresmannCatStruc}, is essential.

\begin{definition}
A \imp{double category} is an internal category in the category of small categories.
\end{definition}

Given a double category \(\Cat{D}\), the relevant functors are illustrated in the following diagram:
\[ \xymatrix{\Cat{D}\sped{1}  \ar@<1ex>[rr]^{\mathsf{T}\ped{\Cat{D} } } \ar@<-1ex>[rr]_{\mathsf{S}\ped{\Cat{D} } } && \Cat{D}\sped{0}  \ar[rr]^{\mathsf{I}\ped{\Cat{D} } } && \Cat{D}\sped{1} && \Cat{D}\sped{2} \ar[ll]_{\mathsf{M}\ped{\Cat{D} } }  }
\]
where \(\Cat{D}\sped{0}\), \(\Cat{D}\sped{1}\) and \(\Cat{D}\sped{2} = \Cat{D} \pdue{\times}{\mathsf{S}\ped{\Cat{D} } }{\mathsf{T}\ped{\Cat{D} } } \Cat{D}\sped{1}\) are categories (we remind to the reader that the category of small categories has pull-backs).

\begin{definition}
\label{dVertHorMorSquares}
Given a double category \(\Cat{D}\), the set \(\left({\Cat{D}\sped{0}}\right)\sped{0}\) is called the \imp{set of objects}, the set \(\left({\Cat{D}\sped{0}}\right)\sped{1}\) is called the \imp{set of horizontal morphisms}, the set \(\left({\Cat{D}\sped{1}}\right)\sped{0}\) is called the \imp{set of vertical morphisms} and the set \(\left({\Cat{D}\sped{1}}\right)\sped{1}\) is called the \imp{set of squares}.
Moreover, the category \(\Cat{D}\sped{0}\) is called the \imp{category of objects} and \(\Cat{D}\sped{1}\) is called the \imp{category of morphisms}.
\end{definition}

The reason behind Definition~\(\ref{dVertHorMorSquares}\) will be manifest in the forthcoming diagrams of this section that will also illustrate how to operate with double categories.

Given a group \(G\) and a  categorified \(G\)-set \(\Cat{X}\), we set \(\Cat{D}\sped{i} = \Xmoi \rtimes G\) for \(i=0,1\).
We are now going to construct a structure of double category \(\Cat{D}\) starting from the categories \(\Cat{D}\sped{0}\) and \(\Cat{D}\sped{1}\).
We define the \imp{target functor} \(\mathsf{T}\ped{\Cat{D}} \colon \Cat{D}\sped{1} \longrightarrow \Cat{D}\sped{0}\) as in the following diagrams:
\[ \left( \vcenter{  \xymatrix{ x  \ar[d]^{f} \\ y } }  \right)  
\xymatrix@C=3.5em{   \ar[r]^{\left({\mathsf{T}\ped{\Cat{D} } }\right)\sped{0} } &   }  y  
\qquad \text{and} \qquad
\left(  \vcenter{ \xymatrix@R=2.5ex@C=3.5em{ x  \ar[dd]_{f} &  xg  \ar[dd]^{fg} \ar[l]_{(x,g)} \\
   &  \ar@{=>}[l]_{(f,g)}  \\
y  &  yg \ar[l]_{(y,g)} } } \right)  
\xymatrix@C=3.5em{   \ar[r]^{\left({\mathsf{T}\ped{\Cat{D} } }\right)\sped{1} } &   } \left(  \xymatrix@C=3.5em{ y  &  yg \ar[l]_{(y,g)} } \right).
\]
We will now check that \(\mathsf{T}\ped{\Cat{D}}\) is a functor.
Given a vertical morphism \(f \colon x \longrightarrow y\) in \(\left({\Cat{D}\sped{1}}\right)\sped{0}\) we have
\[ \mathsf{T}\ped{\Cat{D}} \left(  \vcenter{ \xymatrix@R=2.5ex@C=3.5em{ x  \ar[dd]_{f} &  xg  \ar[dd]^{fg} \ar[l]_{(x,g)} \\
   &  \ar@{=>}[l]_{(f,g)}  \\
y   & yg \ar[l]_{(y,g)} } } \right)  = \left(  \xymatrix@C=3.5em{ y  &  yg \ar[l]_{(y,g)} } \right)
\]
and, regarding the composition, we compute as follows
\[ 
\begin{aligned}
& \mathsf{T}\ped{\Cat{D}} \left(  \vcenter{ \xymatrix@R=2.5ex@C=3.5em{ x  \ar[dd]_{f}  & xg  \ar[dd]^{fg} \ar[l]_{(x,g)} \\
   &  \ar@{=>}[l]_{(f,g)}  \\
y   & yg \ar[l]_{(y,g)} } } \circ \vcenter{ \xymatrix@R=2.5ex@C=3.5em{ xg  \ar[dd]_{fg}  & xgh  \ar[dd]^{fgh} \ar[l]_{(xg,h)} \\
   &  \ar@{=>}[l]_{(fg,h)}  \\
yg  & ygh \ar[l]_{(yg,h)} } }  \right)  =  \mathsf{T}\ped{\Cat{D}} \left(  \vcenter{ \xymatrix@R=2.5ex{ x  \ar[dd]_{f} &  xgh  \ar[dd]^{fgh} \ar[l]_{(x,gh)} \\
   &  \ar@{=>}[l]_{(f,gh)}  \\
y  &  ygh \ar[l]_{(y,gh)} } }  \right) \\
&= \left(  \xymatrix@C=3.5em{ y  &  ygh \ar[l]_{(y,gh)} } \right) 
= \left(  \xymatrix@C=3.5em{ y  &  yg \ar[l]_{(y,g)} } \right) \circ  \left(  \xymatrix@C=3.5em{ yg &  ygh \ar[l]_{(y,gh)} } \right)  \\
&= \mathsf{T}\ped{\Cat{D}} \left(  \vcenter{ \xymatrix@R=2.5ex{ x  \ar[dd]_{f} &  xgh  \ar[dd]^{fg} \ar[l]_{(x,g)} \\
   &  \ar@{=>}[l]_{(f,gh)}  \\
y  &  ygh\ar[l]_{(y,g)} } }  \right) \mathsf{T}\ped{\Cat{D}} \left(  \vcenter{ \xymatrix@R=2.5ex{ xg  \ar[dd]_{fg} &  xgh  \ar[dd]^{fgh} \ar[l]_{(xg,h)} \\
   &  \ar@{=>}[l]_{(fg,h)}  \\
yg  &  ygh \ar[l]_{(yg,h)} } }  \right).
\end{aligned}
\]
We define the \imp{source functor} \(\mathsf{S}\ped{\Cat{D}} \colon \Cat{D}\sped{1} \longrightarrow \Cat{D}\sped{0}\) as in the following diagrams:
\[ \left( \vcenter{  \xymatrix{ x  \ar[d]^{f} \\ y } }  \right)  
\xymatrix@C=3.5em{   \ar[r]^{\left({\mathsf{S}\ped{\Cat{D} } }\right)\sped{0} } &   }  x  
\qquad \text{and} \qquad
\left(  \vcenter{ \xymatrix@R=2.5ex@C=3.5em{ x  \ar[dd]_{f} &  xg  \ar[dd]^{fg} \ar[l]_{(x,g)} \\
   &  \ar@{=>}[l]_{(f,g)}  \\
y  &  yg \ar[l]_{(y,g)} } } \right)  
\xymatrix@C=3.5em{   \ar[r]^{\left({\mathsf{S}\ped{\Cat{D} } }\right)\sped{1} } &   } \left(  \xymatrix@C=3.5em{ x  &  xg \ar[l]_{(x,g)} } \right).
\]
The \imp{identity functor} \(\mathsf{I}\ped{\Cat{D}} \colon \Cat{D}\sped{0} \longrightarrow \Cat{D}\sped{1}\) can be defined as in the following diagrams:
\[ x \xymatrix@C=3.5em{   \ar[r]^{\left({\mathsf{I}\ped{\Cat{D} } }\right)\sped{0} } &   }  
\left( \vcenter{  \xymatrix{ x  \ar[d]^{\id{x} } \\ x } }  \right)  
\qquad \text{and} \qquad
\left(  \xymatrix@C=3.5em{ x  &  xg \ar[l]_{(x,g)} } \right)
\xymatrix@C=3.5em{   \ar[r]^{\left({\mathsf{I}\ped{\Cat{D} } }\right)\sped{1} } &   }   
\left(  \vcenter{ \xymatrix@R=2.5ex@C=3.5em{ x  \ar[dd]_{\id{x} } &  xg  \ar[dd]^{\id{x} g} \ar[l]_{(x,g)} \\
   &  \ar@{=>}[l]_{(\id{x} ,g)}  \\
x  &  xg \ar[l]_{(x,g)} } } \right).
\]
The proof that \(\mathsf{S}\ped{\Cat{D}}\) and \(\mathsf{I}\ped{\Cat{D}}\) are functors is similar to the one of \(\mathsf{T}\ped{\Cat{D}}\).
Regarding the multiplication functor \(\mathsf{M}\ped{\Cat{D}} \colon \Cat{D}\sped{2} \longrightarrow \Cat{D}\sped{1}\), we define
\[ \left( \vcenter{  \xymatrix{ y  \ar[d]^{l} \\ z } }, \vcenter{  \xymatrix{ x  \ar[d]^{f} \\ y } }  \right)  \xymatrix@C=3.5em{   \ar[r]^{\left({\mathsf{M}\ped{\Cat{D} } }\right)\sped{0} } &   }  
\left( \vcenter{  \xymatrix{ x  \ar[d]^{lf} \\ z } }  \right),\, 
\quad
 \left(  \vcenter{ \xymatrix@R=2.5ex@C=3.5em{ y  \ar[dd]_{l } &  yg  \ar[dd]^{l g} \ar[l]_{(y,g)} \\
   &  \ar@{=>}[l]_{(l ,g)}  \\
z  &  zg \ar[l]_{(z,g)} } } , \vcenter{ \xymatrix@R=2.5ex@C=3.5em{ x  \ar[dd]_{f } &  xg  \ar[dd]^{f g} \ar[l]_{(x,g)} \\
   &  \ar@{=>}[l]_{(f ,g)}  \\
y  &  yg \ar[l]_{(y,g)} } } \right)
\xymatrix@C=3.5em{   \ar[r]^{\left({\mathsf{M}\ped{\Cat{D} } }\right)\sped{1} } &   }   
\left(  \vcenter{ \xymatrix@R=2.5ex@C=3.5em{ x  \ar[dd]_{lf } &  xg  \ar[dd]^{(lf) g} \ar[l]_{(x,g)} \\
   &  \ar@{=>}[l]_{(lf ,g)}  \\
z  &  zg \ar[l]_{(z,g)} } } \right).
\]

The fact that  \(\mathsf{M}\ped{\Cat{D}}\) is a functor, follows  from the subsequent calculations:
\[ 
\mathsf{M}\ped{\Cat{D}} \left(  \vcenter{ \xymatrix@R=2.5ex@C=3.5em{ y  \ar[dd]_{l} &  y  \ar[dd]^{l} \ar[l]_{(y,1)} \\
   &  \ar@{=>}[l]_{(l,1)}  \\
z   & z \ar[l]_{(z,1)} } },  \vcenter{ \xymatrix@R=2.5ex@C=3.5em{ x  \ar[dd]_{f} &  x  \ar[dd]^{f} \ar[l]_{(x,1)} \\
   &  \ar@{=>}[l]_{(f,1)}  \\
y   & y \ar[l]_{(y,1)} } } \right) = \left(  \vcenter{ \xymatrix@R=2.5ex@C=3.5em{ x  \ar[dd]_{lf} &  x  \ar[dd]^{lf} \ar[l]_{(x,1)} \\
   &  \ar@{=>}[l]_{(lf,1)}  \\
z   & z \ar[l]_{(z,1)} } } \right) 
\]
and, since \((lf)g = (lg)(fg)\), for every $g \in G$, we have that
\[ \begin{aligned}
& \mathsf{M}\ped{\Cat{D}} \left(   \left(  \vcenter{ \xymatrix@R=2.5ex@C=3.5em{ y  \ar[dd]_{l } &  yg  \ar[dd]^{l g} \ar[l]_{(y,g)} \\
   &  \ar@{=>}[l]_{(l ,g)}  \\
z  &  zg \ar[l]_{(z,g)} } } , \vcenter{ \xymatrix@R=2.5ex@C=3.5em{ x  \ar[dd]_{f } &  xg  \ar[dd]^{f g} \ar[l]_{(x,g)} \\
   &  \ar@{=>}[l]_{(f ,g)}  \\
y  &  yg \ar[l]_{(y,g)} } } \right) \circ  \left(  \vcenter{ \xymatrix@R=2.5ex@C=3.5em{ yg  \ar[dd]_{lg } &  ygh  \ar[dd]^{l gh} \ar[l]_{(yg,h)} \\
   &  \ar@{=>}[l]_{(lg ,h)}  \\
zg  &  zgh \ar[l]_{(zg,h)} } } , \vcenter{ \xymatrix@R=2.5ex@C=3.5em{ xg  \ar[dd]_{fg } &  xgh  \ar[dd]^{f gh} \ar[l]_{(xg,h)} \\
   &  \ar@{=>}[l]_{(fg ,h)}  \\
yg  &  ygh \ar[l]_{(yg,h)} } } \right) \right) \\
&= \mathsf{M}\ped{\Cat{D}} \left(  \vcenter{ \xymatrix@R=2.5ex@C=3.5em{ y  \ar[dd]_{l } &  yg  \ar[dd]^{l g} \ar[l]_{(y,g)} \\
   &  \ar@{=>}[l]_{(l ,g)}  \\
z  &  zg \ar[l]_{(z,g)} } } \circ \vcenter{ \xymatrix@R=2.5ex@C=3.5em{ yg  \ar[dd]_{lg } &  ygh  \ar[dd]^{l gh} \ar[l]_{(yg,h)} \\
   &  \ar@{=>}[l]_{(lg ,h)}  \\
zg  &  zgh \ar[l]_{(zg,h)} } } , \vcenter{ \xymatrix@R=2.5ex@C=3.5em{ x  \ar[dd]_{f } &  xg  \ar[dd]^{f g} \ar[l]_{(x,g)} \\
   &  \ar@{=>}[l]_{(f ,g)}  \\
y  &  yg \ar[l]_{(y,g)} } }   \circ \vcenter{ \xymatrix@R=2.5ex@C=3.5em{ xg  \ar[dd]_{fg } &  xgh  \ar[dd]^{f gh} \ar[l]_{(xg,h)} \\
   &  \ar@{=>}[l]_{(fg ,h)}  \\
yg  &  ygh \ar[l]_{(yg,h)} } }  \right) \\
&= \mathsf{M}\ped{\Cat{D}} \left(  \vcenter{ \xymatrix@R=2.5ex@C=3.5em{ y  \ar[dd]_{l } &  ygh  \ar[dd]^{l gh} \ar[l]_{(y,gh)} \\
   &  \ar@{=>}[l]_{(l ,gh)}  \\
z  &  zgh \ar[l]_{(z,gh)} } }  , \vcenter{ \xymatrix@R=2.5ex@C=3.5em{ x  \ar[dd]_{f } &  xgh  \ar[dd]^{f gh} \ar[l]_{(x,gh)} \\
   &  \ar@{=>}[l]_{(f ,gh)}  \\
y  &  ygh \ar[l]_{(y,gh)} } }     \right)
= \left( \vcenter{ \xymatrix@R=2.5ex@C=3.5em{ x  \ar[dd]_{lf } &  xgh  \ar[dd]^{(lf) gh} \ar[l]_{(x,gh)} \\
   &  \ar@{=>}[l]_{(lf ,gh)}  \\
z  &  zgh \ar[l]_{(z,gh)} } } \right) \\
&= \left( \vcenter{ \xymatrix@R=2.5ex@C=3.5em{ x  \ar[dd]_{lf } &  xg  \ar[dd]^{(lf) g} \ar[l]_{(x,g)} \\
   &  \ar@{=>}[l]_{(lf ,g)}  \\
z  &  zg \ar[l]_{(z,g)} } } \right) \circ \left( \vcenter{ \xymatrix@R=2.5ex@C=3.5em{ xg  \ar[dd]_{(lf)g } &  xgh  \ar[dd]^{(lf) gh} \ar[l]_{(xg,h)} \\
   &  \ar@{=>}[l]_{((lf)g,h)}  \\
zg  &  zgh \ar[l]_{(zg,h)} } } \right) \\
&= \mathsf{M}\ped{\Cat{D}} \left(  \vcenter{ \xymatrix@R=2.5ex@C=3.5em{ y  \ar[dd]_{l } &  yg  \ar[dd]^{l g} \ar[l]_{(y,g)} \\
   &  \ar@{=>}[l]_{(l ,g)}  \\
z  &  zg \ar[l]_{(z,g)} } }  , \vcenter{ \xymatrix@R=2.5ex@C=3.5em{ x  \ar[dd]_{f } &  xg  \ar[dd]^{f g} \ar[l]_{(x,g)} \\
   &  \ar@{=>}[l]_{(f ,g)}  \\
y  &  yg \ar[l]_{(y,g)} } }     \right)
\circ \mathsf{M}\ped{\Cat{D}} \left(  \vcenter{ \xymatrix@R=2.5ex@C=3.5em{ yg  \ar[dd]_{lg } &  ygh  \ar[dd]^{l gh} \ar[l]_{(yg,h)} \\
   &  \ar@{=>}[l]_{(lg ,h)}  \\
zg  &  zgh \ar[l]_{(zg,h)} } }  , \vcenter{ \xymatrix@R=2.5ex@C=3.5em{ xg  \ar[dd]_{fg } &  xgh  \ar[dd]^{f gh} \ar[l]_{(xg,h)} \\
   &  \ar@{=>}[l]_{(fg ,h)}  \\
yg  &  ygh \ar[l]_{(yg,h)} } }     \right),
\end{aligned}
\]
for every $h \in G$.

\begin{definition}\label{def:TDC}
The double category \(\Cat{D}\) just constructed is called the \imp{right translation double category} of the right categorified \(G\)-set \(\Cat{X}\) and we denote it by \(\Cat{X } \rtimes G\).
\end{definition}

\begin{example}\label{exam:TDC}
Given a small category \(\Cat{C}\) and a group \(G\), let \(\Cat{X}= \Cat{C} \times G\) be the right categorified \(G\)-set of section~\ref{secExCatGSets}.
We want to describe the double translation category \(\Cat{D} = \Cat{C} \times X\).
For \(i=0,1\) we have \(\left({\Cat{D}\sped{0}}\right)\sped{0} = \left({\Xob \rtimes G}\right)\sped{0} = \Xob\), \(\left({\Cat{D}\sped{0} }\right)\sped{1} = \left({\Xob \times G}\right)\sped{1} = \Xob \times G\), \(\left({\Cat{D}\sped{1}}\right)\sped{0} = \left({\Xmo \rtimes G}\right)\sped{0} = \Xmo\) and \(\left({\Cat{D}\sped{1} }\right)\sped{1} = \left({\Xmo \times G}\right)\sped{1} = \Xmo \times G\).
The target, source and identity functors are as follows:
\[ \mathsf{T}\ped{\Cat{D} }= \left({\mathsf{t}\ped{\Cat{C} } \times \id{X}, \mathsf{t}\ped{\Cat{C} } }\right),
\qquad 
\mathsf{S}\ped{\Cat{D} }= \left({\mathsf{s}\ped{\Cat{C} } \times \id{X}, \mathsf{s}\ped{\Cat{C} } }\right)
\qquad \text{and} \qquad
\mathsf{I}\ped{\Cat{D} }= \left({\iota\ped{\Cat{C} } \times \id{X}, \iota\ped{\Cat{C} } }\right),
\]
Regarding the composition functor, we have:
\[ \begin{aligned}
 \mathsf{M}\ped{\Cat{D} } & \colon \Cat{D}\sped{2} \longrightarrow \Cat{D}\sped{1} \\
& \left({\Cat{D}\sped{2} }\right)\sped{0} \ni  (h,f) \longrightarrow m\ped{\Cat{X} }(h,f) \\
& \left({\Cat{D}\sped{2} }\right)\sped{1} \ni \left({(h,g),(f,g)}\right) \longrightarrow \left({m\ped{\Cat{X} }(h,f),g}\right) .
\end{aligned}
\]
\end{example}

\begin{definition}
Given a double category \(\Cat{D}\), let us consider a square \(Q \in \left({\Cat{D}\sped{1} }\right)\sped{1}\).
We define the \imp{vertices of the square \(Q\)} as \(\mathsf{s}\ped{\Cat{D}\sped{0}}\left(\mathsf{S}\ped{\Cat{D}}(Q)\right)\), \(\mathsf{t}\ped{\Cat{D}\sped{0}}\left(\mathsf{S}\ped{\Cat{D}}(Q)\right)\), \(\mathsf{s}\ped{\Cat{D}\sped{0}}\left(\mathsf{T}\ped{\Cat{D}}(Q)\right)\) and \(\mathsf{t}\ped{\Cat{D}\sped{0}}\left(\mathsf{T}\ped{\Cat{D}}(Q)\right)\).
\end{definition}

\begin{example}
\label{exSquare}
Given \(\Cat{X} \in \CRset{G}\), in the following square in \(\Cat{X} \rtimes G\)
\[ \xymatrix@R=2.5ex@C=3.5em{ x  \ar[dd]_{f} &  xg  \ar[dd]^{fg} \ar[l]_{(x,g)} \\
   &  \ar@{=>}[l]_{(f,g)}  \\
y  &  yg \ar[l]_{(y,g)} }
\]
the vertices are \(x\), \(xg\), \(y\) and \(yg\).
\end{example}

\begin{definition}
Let be \(\Cat{X} \in \CRset{G}\): for each \(a, b \in \Xob\) we define the relation  \(a \, \sqre \, b\) if and only if there is a square of \(\Cat{X} \rtimes G\) with \(a\) and \(b\) amongst its vertices.
\end{definition}

\begin{remark}
\label{rSquare}
Given a morphism \(\left(f \colon a \longrightarrow b\right) \in \Xmo\), an object \(\alpha \in \Xob\) and \(g \in G\) we have the diagrams
\begin{center}
\begin{minipage}[c]{0.30\textwidth}
\centering
\[ \xymatrix@R=2.5ex@C=3.5em{ a  \ar[dd]_{f} &  a  \ar[dd]^{f} \ar[l]_{(a,1)} \\
   &  \ar@{=>}[l]_{(f,1)}  \\
b  &  b, \ar[l]_{(b,1)} }
\]
\end{minipage}%
\begin{minipage}[c]{0.10\textwidth}
\centering
and
\end{minipage}%
\begin{minipage}[c]{0.30\textwidth}
\centering
\[ \xymatrix@R=2.5ex@C=3.5em{ \alpha  \ar[dd]_{\id{\alpha} } &  \alpha g  \ar[dd]^{\id{\alpha}g = \id{\alpha g} } \ar[l]_{(\alpha,g)} \\
   &  \ar@{=>}[l]_{\left({\id{\alpha }, g}\right)}  \\
\alpha  &  \alpha g, \ar[l]_{(\alpha,g)} }
\]
\end{minipage}%
\end{center}
therefore \(a \, \sqre \, b\) and \(\alpha \, \sqre \, \alpha g\).
\end{remark}

\begin{remark}
The relation \(\sqre\) is reflexive and symmetric but not transitive: it's enough to consider the following example with a trivial action: \(\Xob= \Set{a, b, c}\) and only \(f \colon  a \longrightarrow b\) and \(h \colon a \longrightarrow c\) as not isomorphisms arrows.
In this case it is clear that \(a \, \sqre \, b\) and \(a \, \sqre \, c\) but we don't have \(b \, \sqre \, c\).
\end{remark}

This suggests us to give the next definition.

\begin{definition}
Let be \(\Cat{X} \in \CRset{G}\): for each \(a, b \in \Xob\) we define \(a \, \Sqre \, b\) if and only if there are \(a_0, \dots, a_n \in \Xob\), with \(n \in \mathbb{N}^+\), such that for each \({i}\in \Set{{0},\dots,{n-1}}\), \(a_i \, \sqre \, a\ped{i+1}\).
This means that \(\Sqre\) is the equivalence relations generated by \(\sqre\).
Given \(a \in \Xob\), we denote with \(\SqOrb{a}\) the full subcategory of \(\Cat{X}\) such that the set of objects of \(\SqOrb{a}\) is the equivalence class of \(a\) with respect to \(\Sqre\).
We also set \(\SqOrb{f}:= \SqOrb{\mathsf{s}\ped{\Cat{X}}(f)} = \SqOrb{\mathsf{t}\ped{\Cat{X}}(f)}\) for every \(f \in \Xmo\).
Moreover, we denote with \(\SqRep{\Cat{X}}\) a set of objects of \(\Cat{X}\) that acts as a set of representative elements with respect to the relation \(\Sqre\).
Note that, for every \(f \in \Xmo\) (respectively, for each \(a \in \Xob\)), \(\SqOrb{f}\) (respectively, \(\SqOrb{a}\)) contains both the \(G\)-orbit of \(f\) (respectively, of \(a\)) and the connected component of the category \(\Cat{X}\) (see~\cite[pages 88 and 90]{Lane1998}) that contains \(f\) (respectively, \(a\)).
As a consequence, both \(\SqOrb{f}\) and \(\SqOrb{a}\) are right \(G\)-sets,  can be decomposed in \(G\)-orbits and are called the \imp{orbit categories} of \(f\) and \(a\), respectively.
\end{definition}

\begin{remark}
\label{rSqOrbSubCat}
Given a categorified \(G\)-set \(\Cat{X}\), let us consider a categorified \(G\)-subset \(\Cat{Y}\) of \(\Cat{X}\).
If we consider a square like the one in Example~\(\ref{exSquare}\), with \(\left( f \colon a \longrightarrow b\right) \in \Xmo\) and \(g \in G\), then, thanks to Remark~\(\ref{rSquare}\), the following statements are equivalent:
\begin{enumerate}
\item \(x \in \Yob\);
\item \(xg \in \Yob\);
\item \(yg \in \Yob\);
\item \(y \in \Yob\).
\end{enumerate}
As a consequence, for each \(\alpha \in \Yob\), \(\SqOrb{\alpha}\) is a categorified \(G\)-subset of \(\Cat{Y}\).
\end{remark}

\begin{proposition}
Let be \(\Cat{X} \in \CRset{G}\): we have
\[ \Cat{X} = \biguplus\ped{a \, \in \, \SqRep{\Cat{X}}} \SqOrb{a}
\]
and, for every \(a \, \in \, \SqRep{\Cat{X}}\), there cannot be two categorified \(G\)-sets \(\Cat{Y}\) and \(\Cat{Y}'\), both not empty, such that \(\SqOrb{a} = \Cat{Y} \Uplus \Cat{Y}'\).
\end{proposition}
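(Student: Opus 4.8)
The plan is to prove the two assertions in turn, each time reducing to Remarks~\ref{rSquare} and~\ref{rSqOrbSubCat}, which already encode the relevant propagation of ``belonging to a piece'' across a square.

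\emph{The decomposition.} By construction $\Sqre$ is the equivalence relation on $\Xob$ generated by $\sqre$, so its classes partition $\Xob$ and $\SqRep{\Cat{X}}$ selects exactly one object per class. First I would check that each class underlies a categorified $G$-subset of $\Cat{X}$. For $G$-stability: the second diagram of Remark~\ref{rSquare} produces, for every $\alpha \in \Xob$ and $g \in G$, a square with $\alpha$ and $\alpha g$ among its vertices, hence $\alpha \sqre \alpha g$ and each class is closed under the $G$-action. For closure of the arrow set: the first diagram of Remark~\ref{rSquare} shows $\mathsf{s}\ped{\Cat{X}}(f) \sqre \mathsf{t}\ped{\Cat{X}}(f)$ for every $f \in \Xmo$, so source and target of any morphism lie in the same class; consequently the partition of $\Xob$ lifts to a partition of $\Xmo$ (assign $f$ to the class of $\mathsf{s}\ped{\Cat{X}}(f)$), and this partition is respected by $\iota\ped{\Cat{X}}$ and $m\ped{\Cat{X}}$ since $\iota\ped{\Cat{X}}(a)$ has source $a$ and composable arrows share endpoints. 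Restricting the structure maps then makes each $\SqOrb{a}$ a categorified $G$-subset of $\Cat{X}$, and since $\Uplus$ in $\CRset{G}$ is computed degreewise as the disjoint union of $G$-sets, reassembling the pieces gives $\Cat{X} = \biguplus_{a \in \SqRep{\Cat{X}}} \SqOrb{a}$.

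\emph{Indecomposability.} Fix $a \in \SqRep{\Cat{X}}$ and suppose, towards a contradiction, that $\SqOrb{a} = \Cat{Y} \Uplus \Cat{Y}'$ with both $\Cat{Y}$ and $\Cat{Y}'$ nonempty. Degreewise, the object sets of $\Cat{Y}$ and $\Cat{Y}'$ partition that of $\SqOrb{a}$, and since no morphism of a coproduct runs between the two summands, $\Cat{Y}$ and $\Cat{Y}'$ are (full) categorified $G$-subsets of $\SqOrb{a}$, hence of $\Cat{X}$. Choose an object $b$ of $\Cat{Y}$ and an object $c$ of $\Cat{Y}'$; both lie in the $\Sqre$-class of $a$, so there is a chain $b = a_0 \sqre a_1 \sqre \cdots \sqre a_n = c$, and every $a_i$ is still in that class (each partial chain shows $a_i \Sqre a$), hence an object of $\Cat{Y}$ or of $\Cat{Y}'$. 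As $b$ is in $\Cat{Y}$ and $c$ is not, there is an index $i$ with $a_i$ in $\Cat{Y}$ and $a_{i+1}$ not. By definition of $\sqre$ there is a square of $\Cat{X} \rtimes G$ having $a_i$ and $a_{i+1}$ among its vertices; every such square has the shape displayed in Example~\ref{exSquare}, determined by some $f \in \Xmo$ and $g \in G$, with vertices $\mathsf{s}\ped{\Cat{X}}(f)$, $\mathsf{s}\ped{\Cat{X}}(f)g$, $\mathsf{t}\ped{\Cat{X}}(f)$, $\mathsf{t}\ped{\Cat{X}}(f)g$. Remark~\ref{rSqOrbSubCat}, applied to the categorified $G$-subset $\Cat{Y}$, says precisely that these four vertices are either all objects of $\Cat{Y}$ or all outside $\Cat{Y}$; since $a_i$ is one of them and lies in $\Cat{Y}$, so does $a_{i+1}$, a contradiction. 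Hence $\SqOrb{a}$ admits no such nontrivial decomposition.

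\emph{Where the work is.} The first part is essentially bookkeeping; the only point requiring care there is that restricting the internal-category structure of $\Cat{X}$ to a $\Sqre$-class genuinely yields an object of $\CRset{G}$ and that $\Uplus$ recovers $\Cat{X}$ from these pieces. The step I expect to be the main obstacle lies in the indecomposability argument: one must pin down the shape of an arbitrary square of $\Cat{X} \rtimes G$ — namely that every element of $\left(\Xmo \rtimes G\right)_{1} = \Xmo \times G$ has exactly the four vertices listed above — so that Remark~\ref{rSqOrbSubCat} applies verbatim; once that is secured, the chain argument closes at once.
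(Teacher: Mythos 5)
Your proof is correct and follows essentially the same route as the paper's: the decomposition is obtained by observing that each $\Sqre$-class is closed under the $G$-action and under taking sources/targets (Remark~\ref{rSquare}), and indecomposability is derived from the vertex-propagation property of Remark~\ref{rSqOrbSubCat}. Your explicit chain argument $b=a_0\,\sqre\,\cdots\,\sqre\,a_n=c$ is just an unfolding of the paper's more compressed step ``$b\in\left(\SqOrb{b'}\right)_{0}\subseteq\Yob'$'', so there is no substantive difference.
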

\begin{proof}
Every category is the disjoint union of its connected components and, for each \(a \in \Xob\), \(\SqOrb{a}\) is the disjoint union of the connected components of \(\Cat{X}\) with objects in the \(\Sqre\) relation with \(a\).

Regarding the last statement, by contradiction, let us assume that there are two categorified \(G\)-sets \(\Cat{Y}\) and \(\Cat{Y}'\) such that \(\Yob \cap \Yob' = \emptyset\) and \(\Cat{Y} \neq \emptyset \neq \Cat{Y}'\).
We consider \(b \in \Yob\) and \(b' \in \Yob'\): then \(b , b' \in \SqOrb{a}\) thus \(b \, \Sqre \, a \, \Sqre \, b'\).
Thanks to Remark~\(\ref{rSqOrbSubCat}\) we obtain \(b   \in \left(\SqOrb{b'}\right)_{0} \subseteq \Yob'\), that is absurd because \(b \in \Yob\).
\end{proof}

\section{Burnside ring functors of groups}\label{sec:BGrps}

We will give the main steps to construct the categorified Burnside  ring of a given group, using its category of categorified group-sets, and we will compare this new ring with the classical one, providing a natural transformation between the two contravariant functors.
Moreover, we will show that this natural transformation is injective but not surjective and we will conclude illustrating, with an example, the difference between the classical case and the categorified one.

Given a morphism of groups \(\phi \colon H \longrightarrow G\), we define the induced functor, referred to as the \imp{induction functor}
\[\phi^\ast  \colon \CRset{G} \longrightarrow \CRset{H},
\]
which sends the  categorified \(G\)-set \(\Cat{X}\) to the  categorified \(H\)-set \(\phi^\ast\left({\Cat{X}}\right)\) with the action \(x \cdot h := x \phi(h)\) for each \(x \in \Xmoi\) and \(h \in H\), for \(i=0,1\).
The target, source, identity and multiplication maps remain the same but they will be morphisms in \(\rset{H}\) and not in \(\rset{G}\).
Given a morphism of  categorified \(G\)-set \(f \colon \Cat{X}  \longrightarrow \Cat{Y}\), we define the morphism of categorified  \(H\)-sets \(\phi^\ast(f) \colon \phi^\ast\left(\Cat{X}\right)  \longrightarrow \phi^\ast\left(\Cat{Y}\right)\) as the morphism \(f\).
In a similar way to how it has been done in~\cite{KaoKowMoritaHAlgd} and~\cite[Proposition 3.2]{KaoutitSpinosaBurn}, it is possible to prove that \(\phi^\ast\) is monoidal with respect to both \(\Uplus\) and \(\times\). Thus, $\phi^*$ is Laplaza functor, in the sense of \cite[Appendices]{KaoutitSpinosaBurn}

Let \(G\) be a group.
We will now briefly develop a Burnside theory based on categorified  \(G\)-sets, following the similar picture as it has been done in~\cite{KaoutitSpinosaBurn}.
We recall that by the word \imp{rig} we mean a unital commutative ring that doesn't necessarily have additive inverses (another term often used is semiring, see for instance~\cite[Appendix~A.2]{KaoutitSpinosaBurn}).
We denote by \(\fCRset{G}\) the \(2\)-category of finite  categorified \(G\)-sets (we say that \(\Cat{X} \in \CRset{G}\) is finite if \(\Xmo\) is finite); however, as with \(\CRset{G}\), we will mainly consider it as a category.
We denote by \(\siL\left({G}\right)\) the quotient set of \(\fCRset{G}\) by the equivalence relation \(\wea\).
Given a morphism of groups \(\phi \colon H \longrightarrow G\), thanks to~\cite[Lemma~6.1]{KaoutitSpinosaBurn}, it is clear that we have a monomorphism of rigs
\[ \begin{aligned}{\siL\left({\phi}\right)}  \colon & {\siL\left({G}\right)} \longrightarrow {\siL\left({H}\right)} \\ & {\left[{X}\right] }  \longrightarrow {\left[{\phi^\ast\left({X}\right) }\right] .}\end{aligned} 
\]
In this way we obtain a contravariant functor \(\siL \colon \bf{Grp} \longrightarrow \riG\), that is, from the category of groups to the category of rigs.

Given a group \(G\), using the functor \(\SInc{G}\) of Eq.~\(\eqref{eIncFun}\), we can construct an injective morphism from the classical Burnside rig of \(G\) to the categorified Burnside rig of \(G\) in the following way:
\[ \begin{aligned}{\siIL(G)}  \colon & {\lL\left({G}\right) } \longrightarrow {\siL\left({G}\right) } \\ & {\left[{X}\right] }  \longrightarrow {\left[{\SInc{G}\left({X}\right) }\right] .}\end{aligned} 
\]
In this way we obtain a natural transformation \(\siIL \colon \lL \longrightarrow \siL\), that is, given a morphism of groups \(\phi \colon H \longrightarrow G\), the following diagram is commutative:
\[ \xymatrix{ 
\lL(G) \ar[rr]^{\siIL(G)} \ar[d]_{\lL(\phi)} & & \siL(G)  \ar[d]^{\siL(\phi)}  \\
\lL(H) \ar[rr]^{\siIL(H)} & & \siL(H). 
}
\]

\begin{remark}
\label{rCancProp}
In the classical case the cancellative property of the additive monoid \(\lL\left({G}\right)\) is guaranteed by the Burnside Theorem (see~\cite[Thm.~2.4.5]{BoucBisetFunctors}) but, in this context, we still don't know whether a similar theorem is true or not and, consequently, we cannot say whether the additive monoid \(\siL(G)\) satisfies the cancellative property or not.
\end{remark}

\begin{proposition}
The rig homomorphism \(\siIL(G)\) is injective but not surjective.
\end{proposition}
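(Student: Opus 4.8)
The plan is to handle injectivity and the failure of surjectivity separately, reducing both to the skeleton characterisation of weak equivalence established in Proposition~\ref{cWeakEqIsom}. The elementary remark used throughout is that the categorified \(G\)-set \(\SInc{G}(Z)\) is discrete for every \(G\)-set \(Z\), hence has no non-identity isomorphisms and no two distinct isomorphic objects; so \(\SInc{G}(Z)\) is (a copy of) its own skeleton, and conversely a categorified \(G\)-set whose skeleton is discrete is weakly equivalent to \(\SInc{G}(Z)\) with \(Z\) its set of objects. Consequently the image of \(\siIL(G)\) consists exactly of the classes \([\Cat{X}]\) with \(\Ske(\Cat{X})\) discrete.

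For injectivity, I would take finite \(X,Y\in\rset{G}\) with \(\siIL(G)([X])=\siIL(G)([Y])\), i.e.\ \(\SInc{G}(X)\wea\SInc{G}(Y)\). By the last assertion of Proposition~\ref{cWeakEqIsom} together with the remark above, the skeletons \(\SInc{G}(X)\) and \(\SInc{G}(Y)\) are isomorphic as categorified \(G\)-sets; comparing the underlying \(G\)-sets of objects (equivalently, using that \(\SInc{G}\) is fully faithful, see~\eqref{eIncFun}, and therefore reflects isomorphisms) yields \(X\cong Y\) in \(\rset{G}\), hence \([X]=[Y]\) in \(\lL(G)\). This step is routine once the preceding remark is in place.

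For non-surjectivity it suffices to exhibit a single finite categorified \(G\)-set whose skeleton is not discrete. I would take \(\Cat{C}\) to be the small category with two objects \(0,1\) and a unique non-identity arrow \(f\colon 0\longrightarrow 1\), let \(\ast\) be the one-point \(G\)-set, and set \(\Cat{X}:=\Cat{C}\times\ast\) in the sense of Section~\ref{secExCatGSets} (so \(\Xob=\{0,1\}\) and \(\Xmo\) has three elements, all fixed by \(G\)); this is a finite categorified \(G\)-set for any group \(G\). Since \(f\) is not an isomorphism, \(\Cat{X}\) is its own skeleton and \(\Ske(\Cat{X})\) is not discrete, so by the remark \([\Cat{X}]\) lies outside the image of \(\siIL(G)\). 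For \(G\) finite one could alternatively take \(\Cat{C}\times G\) with the right regular action and invoke Proposition~\ref{prop:ClassWE}.

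There is no substantial obstacle: the only points requiring care are the two halves of the remark in the first paragraph — that a skeleton of a discrete categorified \(G\)-set is again discrete, which is immediate from Definition~\ref{def:Skeleton} and the uniqueness-up-to-isomorphism clause of Proposition~\ref{cWeakEqIsom}, and the resulting description of the image of \(\siIL(G)\) — after which both claims follow formally.
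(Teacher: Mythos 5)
Your proof is correct and follows essentially the same route as the paper: both halves reduce to the skeleton characterisation of weak equivalence from Proposition~\ref{cWeakEqIsom}, using that \(\SInc{G}(Z)\) is discrete and hence its own skeleton. The only difference is that you exhibit an explicit finite witness (\(\Cat{C}\times\ast\) with \(\Cat{C}\) the arrow category) for non-surjectivity, whereas the paper simply takes an arbitrary categorified \(G\)-set with \(\ndCat\left(\Cat{X}\right)\neq\emptyset\) and argues by contradiction; your version is, if anything, slightly more complete on that point.
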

\begin{proof}
We will use the abuses of notations \(\SInc{G} = \mathcal{I}\) and \(\siIL = \siIL(G)\).
Given \(X, Y \in \Rset{G}\) such that \(\siIL\left(\left[X\right]\right) = \siIL\left(\left[Y\right]\right)\), we have \(\left[\mathcal{I}(X)\right] = \siIL\left(\left[X\right]\right) = \siIL\left(\left[Y\right]\right) = \left[\mathcal{I}(Y)\right]\) thus \(\mathcal{I}(X) \wea \mathcal{I}(Y)\).
Therefore
\[ \mathcal{I}(X) \cong \Ske\left(\mathcal{I}(X)\right) 
\cong \Ske\left(\mathcal{I}(Y)\right) \cong \mathcal{I}(Y)
\]
and \(\left[X\right] = \left[Y\right]\).

Now, by contradiction, let's assume that \(\siIL\) is surjective.
We consider a categorified \(G\)-set \(\Cat{X}\) such that \(\Cat{X} \neq \dCat\left(\Cat{X}\right)\) (or, equivalently, such that \(\ndCat\left(\Cat{X}\right) \neq \emptyset\)).
Then there is a \(G\)-set \(Y\) such that \(\left[\Cat{X}\right] = \siIL\left(\left[Y\right]\right) = \left[ \mathcal{I}\left(Y\right)\right]\) thus \(\Cat{X} \wea \mathcal{I}\left(Y\right)\).
As a consequence
\[ \Ske\left(\Cat{X}\right) \cong \Ske\left(\mathcal{I}\left(Y\right)\right) \cong \mathcal{I}\left(Y\right)
\]
thus \(\ndCat\left(\Cat{\Ske\left(\Cat{X}\right)}\right) \cong \ndCat\left(\mathcal{I}\left(Y\right)\right) = \emptyset\), which is absurd.
\end{proof}

We define \(\siB = \gG \siL\) and \(\siIB = \gG \siIL\), where \({\gG}\) is the Grothendieck functor (see~\cite[Appendix~A.3]{KaoutitSpinosaBurn}), that is, a functor that associates to each rig an opportune ring with a specific universal property.
In this way we obtain a contravariant functor \(\siB \colon \bf{Grp} \longrightarrow \cring\), that is, from the category of groups to the category of commutative rings, and a natural transformation
\[ 
\siIB = \gG \siIL \colon \bB = \gG \lL \longrightarrow \siB = \gG \siL.
\]
The commutative ring $\siB(G)$, for a given group $G$, is called the \emph{categorified Burnside ring} of $G$.

\begin{proposition}
The ring homomorphism \(\siIB(G)\) is injective but not surjective.
\end{proposition}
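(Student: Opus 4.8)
The plan is to deduce the statement from the rig-level result (the preceding proposition) by analysing the Grothendieck construction, the key point being that, although $\siL(G)$ is not known to be cancellative (Remark~\ref{rCancProp}), the rig $\lL(G)$ \emph{is} cancellative, again by Remark~\ref{rCancProp}. Consequently the canonical map $\lL(G)\to\bB(G)=\gG\bigl(\lL(G)\bigr)$ is injective and every element of $\bB(G)$ is of the form $[X]-[Y]$ with $X,Y$ finite $G$-sets, two such being equal iff $X\Uplus Y'\cong X'\Uplus Y$ in $\rset{G}$. Since $\siIB(G)=\gG\bigl(\siIL(G)\bigr)$ is a ring homomorphism, injectivity reduces to showing that $\siIB(G)\bigl([X]-[Y]\bigr)=0$ forces $X\cong Y$.

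For injectivity, the hypothesis $[\SInc{G}(X)]-[\SInc{G}(Y)]=0$ in $\siB(G)=\gG\bigl(\siL(G)\bigr)$ produces, by the definition of the Grothendieck group of the commutative monoid $\siL(G)$, a finite categorified $G$-set $\Cat{Z}$ with $\SInc{G}(X)\Uplus\Cat{Z}\wea\SInc{G}(Y)\Uplus\Cat{Z}$. Applying Proposition~\ref{cWeakEqIsom}, together with the facts that a discrete category is its own skeleton and that $\Ske(-)$ commutes with $\Uplus$, this gives an honest isomorphism $\SInc{G}(X)\Uplus\Ske(\Cat{Z})\cong\SInc{G}(Y)\Uplus\Ske(\Cat{Z})$ of categorified $G$-sets; $\Ske(\Cat{Z})$ is finite since $\Cat{Z}$ is. Looking at the induced isomorphism on the object $G$-sets yields $X\Uplus\Ske(\Cat{Z})_{0}\cong Y\Uplus\Ske(\Cat{Z})_{0}$ in $\rset{G}$, and cancellativity of $\lL(G)$ (Remark~\ref{rCancProp}) gives $X\cong Y$, hence $[X]-[Y]=0$ in $\bB(G)$. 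Injectivity of $\siIB(G)$ follows by applying this to the difference of any two elements with the same image.

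For non-surjectivity, choose a finite categorified $G$-set $\Cat{X}$ with $\ndCat(\Cat{X})\neq\emptyset$, for instance the one carrying the trivial $G$-action with two objects and a single non-identity arrow between them. Suppose $[\Cat{X}]=\siIB(G)\bigl([A]-[B]\bigr)$ for finite $G$-sets $A,B$; rewriting this in $\siB(G)$ and unwinding the Grothendieck group as above produces a finite categorified $G$-set $\Cat{Z}$ with $\Cat{X}\Uplus\SInc{G}(B)\Uplus\Cat{Z}\wea\SInc{G}(A)\Uplus\Cat{Z}$. Passing to skeletons via Proposition~\ref{cWeakEqIsom} and then applying $\ndCat(-)$, which distributes over $\Uplus$ and annihilates the discrete pieces $\SInc{G}(A)$ and $\SInc{G}(B)$, yields $\ndCat(\Ske(\Cat{X}))\Uplus\ndCat(\Ske(\Cat{Z}))\cong\ndCat(\Ske(\Cat{Z}))$. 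All categories in sight being finite, comparing the finite numbers of arrows forces $\ndCat(\Ske(\Cat{X}))=\emptyset$. But $\Ske(-)$ is compatible with the decomposition $\Cat{X}=\dCat(\Cat{X})\Uplus\ndCat(\Cat{X})$ (a component of $\Ske(\Cat{X})$ is non-discrete exactly when the component of $\Cat{X}$ it comes from lies in $\ndCat(\Cat{X})$), so $\ndCat(\Ske(\Cat{X}))=\Ske(\ndCat(\Cat{X}))\neq\emptyset$ since $\ndCat(\Cat{X})$ is a non-empty category; this contradiction shows $[\Cat{X}]$ is not in the image of $\siIB(G)$.

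The main obstacle — the reason this is not a formal consequence of the rig statement — is precisely the missing cancellation law in $\siL(G)$ (Remark~\ref{rCancProp}): one cannot cancel the auxiliary object $\Cat{Z}$ directly at the level of $\wea$-classes. The remedy, in both parts, is to descend along Proposition~\ref{cWeakEqIsom} to genuine skeletons, where weak equivalences become isomorphisms and one recovers both finite cardinality counts of arrows and the structural splitting $\dCat\Uplus\ndCat$; the fact that $\SInc{G}$ takes values among discrete (hence skeletal) categories is what makes these two tools interact cleanly with the image of $\siIB(G)$.
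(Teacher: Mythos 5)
Your proof is correct and follows essentially the same route as the paper: replace the weak equivalence coming from the Grothendieck-group relation by an isomorphism of skeletons, use cancellativity of $\lL(G)$ for injectivity, and use the $\ndCat(-)$-part together with a finite cardinality count for non-surjectivity. The only cosmetic difference is that for injectivity you extract the object-level $G$-sets directly from the isomorphism of skeleta, whereas the paper first restricts to the discrete components $\dCat(-)$; both amount to the same appeal to classical Burnside cancellation.
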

\begin{proof}
We will use the abuses of notations \(\SInc{G} = \mathcal{I}\), \(\siIL = \siIL(G)\) and \(\siIB = \siIB(G)\).
We consider
\[ \Big[ \left[{X}\right], \left[{Y}\right] \Big], \quad \Big[ \left[{A}\right], \left[{B}\right] \Big] \in \siB(G)
\]
such that
\[ \Big[ \siIL\left({\left[{X}\right] }\right), \siIL\left({\left[{Y}\right] }\right) \Big] = \siIB\left({\Big[ \left[{X}\right], \left[{Y}\right] \Big]  }\right) 
= \siIB\left({\Big[ \left[{A}\right], \left[{B}\right] \Big]  }\right) 
= \Big[ \siIL\left({\left[{A}\right] }\right), \siIL\left({\left[{B}\right] }\right) \Big].
\]
There is \(\left[{\Cat{E} }\right] \in \siL(G)\) such that
\[ \siIL\left({\left[{X}\right] }\right) + \siIL\left({\left[{B}\right] }\right) + \left[{\Cat{E} }\right] = \siIL\left({\left[{A}\right] }\right) + \siIL\left({\left[{Y}\right] }\right) + \left[{\Cat{E} }\right] ,
\]
therefore
\[ \mathcal{I}\left(X\right) \Uplus \mathcal{I}\left(B\right) \Uplus \Cat{E} \wea \mathcal{I}\left(A\right) \Uplus \mathcal{I}\left(Y\right) \Uplus \Cat{E}.
\]
As a consequence we obtain
\[ \begin{aligned}
& \quad \mathcal{I}\left(X\right) \Uplus \mathcal{I}\left(B\right) \Uplus \Ske\left(\Cat{E}\right) \cong \Ske\left(\mathcal{I}\left(X\right)\right) \Uplus \Ske\left(\mathcal{I}\left(B\right)\right) \Uplus \Ske\left(\Cat{E}\right) \\
& \cong \Ske\left(\mathcal{I}\left(A\right)\right) \Uplus \Ske\left(\mathcal{I}\left(Y\right)\right) \Uplus \Ske\left(\Cat{E}\right)
\cong \mathcal{I}\left(A\right) \Uplus \mathcal{I}\left(Y\right) \Uplus \Ske\left(\Cat{E}\right) ,
\end{aligned}
\]
thus
\[ \begin{aligned}
& \quad \mathcal{I}\left(X\right) \Uplus \mathcal{I}\left(B\right) \Uplus \dCat\left(\Ske\left(\Cat{E}\right)\right) \cong \dCat\left(\mathcal{I}\left(X\right)\right) \Uplus \dCat\left(\mathcal{I}\left(B\right)\right) \Uplus \dCat\left(\Ske\left(\Cat{E}\right)\right) \\
& \cong  \dCat\left(\mathcal{I}\left(A\right)\right) \Uplus \dCat\left(\mathcal{I}\left(Y\right)\right) \Uplus \dCat\left(\Ske\left(\Cat{E}\right)\right) \cong \mathcal{I}\left(A\right) \Uplus \mathcal{I}\left(Y\right) \Uplus \dCat\left(\Ske\left(\Cat{E}\right)\right)
\end{aligned}
\]
therefore, considering that \(\dCat\left(\Ske\left(\Cat{E}\right)\right)\) is finite and can be considered a classical \(G\)-set, since the additive monoid \(\lL\left({G}\right)\) satisfies the cancellative property (see Remark~\(\ref{rCancProp}\)), we obtain
\[ \mathcal{I}\left(X\right) \Uplus \mathcal{I}\left(B\right) \cong \mathcal{I}\left(A\right) \Uplus \mathcal{I}\left(Y\right).
\]
This is equivalent to say that \(X \Uplus B \cong A \Uplus Y\), that is, \(\Big[ \left[{X}\right], \left[{Y}\right] \Big] = \Big[ \left[{A}\right], \left[{B}\right] \Big]\).

Now, by contradiction, let's assume that \(\siIB\) is surjective.
We consider a categorified \(G\)-set \(\Cat{X}\) such that \(\Cat{X} \neq \dCat\left(\Cat{X}\right)\) (or, equivalently, such that \(\ndCat\left(\Cat{X}\right) \neq \emptyset\)).
Then there are \(G\)-sets \(Y\) and \(Z\) such that
\[ \Big[ \left[\Cat{X}\right], \left[\emptyset\right] \Big] = \siIB\left(\Big[ \left[Y\right], \left[Z\right] \Big] \right) 
= \Big[ \siIL\left(\left[Y\right]\right), \siIL\left(\left[Z\right]\right)  \Big] ,
\]
thus there is a categorified \(G\)-set \(\Cat{E}\) such that
\[ \left[{\Cat{X} }\right] + \siIL\left({\left[{Z}\right] }\right) + \left[{\Cat{E} }\right] = \siIL\left({\left[{Y}\right] }\right) + \left[\emptyset\right] + \left[{\Cat{E} }\right]  .
\]
As a consequence, \(\Cat{X} \Uplus  \mathcal{I}\left(Z\right) \Uplus  \Cat{E} \wea \mathcal{I}\left(Y\right) \Uplus  \Cat{E}\) thus
\[ \Ske\left(\Cat{X}\right) \Uplus  \Ske\left(\mathcal{I}\left(Z\right)\right) \Uplus  \Ske\left(\Cat{E}\right) \cong \Ske\left(\mathcal{I}\left(Y\right)\right) \Uplus \Ske\left(\Cat{E}\right) .
\]
Considering that \(\ndCat\left( \Ske\left(\mathcal{I}\left(Z\right)\right) \right) = \emptyset\) and \(\ndCat\left( \Ske\left(\mathcal{I}\left(Y\right)\right) \right) = \emptyset\),
we obtain 
\[ \ndCat\left(\Ske\left(\Cat{X}\right)\right) \Uplus \ndCat\left(\Ske\left(\Cat{E}\right)\right) \cong \ndCat\left(\Ske\left(\Cat{E}\right)\right) .
\]
Since both \(\Cat{X}\) and \(\Cat{E}\) are finite categorified \(G\)-sets (\(\Xmo\) and \(\Cat{E}_1\) are finite), examining the cardinalities, we have
\[ \left\vert\ndCat\left(\Ske\left(\Cat{X}\right)\right)\right\vert + \left\vert\ndCat\left(\Ske\left(\Cat{E}\right)\right)\right\vert = \left\vert\ndCat\left(\Ske\left(\Cat{E}\right)\right)\right\vert ,
\]
that implies \(\left\vert\ndCat\left(\Ske\left(\Cat{X}\right)\right)\right\vert =0\).
This means that \(\emptyset =  \ndCat\left(\Ske\left(\Cat{X}\right)\right) \cong \Ske\left(\ndCat\left(\Cat{X}\right)\right)\), therefore \(\ndCat\left(\Cat{X}\right) = \emptyset\), which is absurd.
\end{proof}

\begin{example}\label{exam:ExmGrps}
Given a group \(G\), the following categories, with only the identities as isomorphisms and with the actions opportunely defined, thanks to Proposition~\(\ref{cWeakEqIsom}\),
\begin{center}
\begin{tikzpicture}[scale=0.75]

\node at (0,0) (a) {\(a\)};

\node at (2,0) (a2) {\(a\)};
\node at (4,0) (b2) {\(b\)};
\draw [->] (a2) -- (b2) node[midway,above]{\(f\)};

\node at (6,0) (a3) {\(a\)};
\node at (8,0) (b3) {\(b\)};
\draw [->] (a3) -- (b3) node[midway,above]{\(f\)};
\node at (8,1) (c3) {\(c\)};

\node at (10,0) (a4) {\(a\)};
\node at (12,0) (b4) {\(b\)};
\draw [->] (a4) -- (b4) node[midway,below]{\(f_{\Sscript{2}}\)};
\node at (12,1) (c4) {\(c\)};
\draw [->] (a4) -- (c4) node[midway,above]{\(f_{\Sscript{1}}\)};

\node at (2,-2.5) (a5) {\(a\)};
\node at (4,-2.5) (b5) {\(b\)};
\draw [->]  (a5) .. controls (2.5,-2.0) and (3.5,-2.0) ..  (b5) node[midway,above]{\(f_{\Sscript{1}}\)};
\draw [->]  (a5) .. controls (2.5,-3.0) and (3.5,-3.0) ..  (b5) node[midway,below]{\(f_{\Sscript{2}}\)};

\node at (6,-2.5) (a6) {\(a\)};
\node at (8,-2.5) (b6) {\(b\)};
\draw [->]  (a6) .. controls (6.5,-2.0) and (7.5,-2.0) ..  (b6) node[midway,above]{\(f_{\Sscript{1}}\)};
\draw [->]  (a6) .. controls (6.5,-3.0) and (7.5,-3.0) ..  (b6) node[midway,below]{\(f_{\Sscript{2}}\)};
\node at (8,-3.5) (c6) {\(c\)};

\node at (10,-2.5) (a7) {\(a\)};
\node at (12,-2.5) (b7) {\(b\)};
\draw [->]  (a7) .. controls (10.5,-2.0) and (11.5,-2.0) ..  (b7) node[midway,above]{\(f_{\Sscript{1}}\)};
\draw [->]  (a7) .. controls (10.5,-3.0) and (11.5,-3.0) ..  (b7) node[midway,below]{\(f_{\Sscript{2}}\)};
\node at (10,-3.5) (c7) {\(c\)};
\draw [->]  (a7) -- (c7) node[midway,left]{\(f_{\Sscript{3}}\)};

\draw[draw=black, dashed] (-0.5,0.5) rectangle (0.5,-0.5);
\draw[draw=black, dashed] (1.5,0.75) rectangle (4.5,-0.5);
\draw[draw=black, dashed] (5.5,1.5) rectangle (8.5,-0.5);
\draw[draw=black, dashed] (9.25,1.5) rectangle (12.5,-0.75);
\draw[draw=black, dashed] (1.5,-1.25) rectangle (4.5,-3.75);
\draw[draw=black, dashed] (5.5,-1.25) rectangle (8.5,-4.0);
\draw[draw=black, dashed] (9.25,-1.25) rectangle (12.5,-4.0);
\end{tikzpicture}
\end{center}
are all examples of not weakly equivalent  categorified \(G\)-sets, thus they give rise to different elements in \(\siL(G)\).
For example, consider \(G=\one\): in this case the \(G\)-action is trivial, thus \(\lL(G)=\mathbb{N}\) (we just have finite sets) but, regarding \(\siL(G)\), we have to consider all the classes given by all the previous not weakly equivalent  categorified \(G\)-sets.
More specifically, for each \(n \in \mathbb{N}^+\), in the case of \(\lL(\one)\), we just have \(n\) points, but in the case of \(\siL(G)\), we have to consider all the possible graphs with \(n\) vertices!
\end{example}

\section{Categorified Burnside ring of a groupoid}\label{sec:Grpds}
In this section we will analyse the  situation for a given groupoid. More precisely,  we will introduce and examine the category of categorified $\cG$-sets for a given groupoid $\cG$, and prove analogues results concerning its associated \emph{categorified Burnside ring}. We also provide a non isomorphic map from the classical Burnside ring to the categorified one. The necessary definitions can be found in~\cite{KaoKowMoritaHAlgd}, \cite{Kaoutit/Spinosa:2018} and~\cite{KaoutitSpinosaBurn}: here we will recall only the essential notions. 

\begin{definition}
A \imp{groupoid} is a (small) category such that all its morphisms are invertible. Given a groupoid \(\mathcal{G}\), a set \(X\) and a map \(\varsigma \colon X \longrightarrow \mathcal{G}_{0}\), we say that \(\left({X,\varsigma}\right)\) is a \textit{right \(\Cat{G}\)-set}, with a \imp{structure map} \(\varsigma\), if there is a \imp{right action} \(\rho \colon X  \pdue{\times}{\varsigma}{\mathsf{t} } \Cat{G}_{1} \longrightarrow X\), sending \(\left({x, g}\right)\) to \(xg\), and satisfying the following conditions:
\begin{enumerate}
\item for each \(x \in X\) and \(g \in \mathcal{G}_{1}\) such that \(\varsigma\left({x}\right)=\mathsf{t}\left({g}\right)\), we have \(\mathsf{s}\left({g}\right)=\varsigma\left({xg}\right)\);
\item for each \(x \in X\), we have \(x \iota_{\varsigma\left({x}\right) } = x\);
\item for each \(x \in X\) and \(g, h \in \mathcal{G}_{1}\) such that \(\varsigma\left({x}\right)= \mathsf{t}\left({g}\right)\) and \(\mathsf{s}\left({g}\right)=\mathsf{t}\left({h}\right)\), we have \(\left({xg}\right)h = x\left({gh}\right)\).
\end{enumerate}
The pair $(X,\varsigma)$ is referred to as a \emph{right $\cG$-set}. If there is no confusion, then we will also say that $X$ is a \emph{right groupoid-set}.  
\end{definition}

The main difference in the groupoid situation is that every object has a structure map that rules the groupoid action over itself; this can be directly seen, once a  groupoid-set is realized as a functor from the underlying category of $\cG$ to the core of the category of sets\footnote{The core category of a given  category is the subcategory whose morphisms are all isomorphisms.}. From now on all groupoid-sets are right ones. 

\begin{definition}
Given a groupoid \(\Cat{G}\), a \imp{morphism of  \(\Cat{G}\)-sets}, also called a \imp{ \(\Cat{G}\)-equivariant map}, \(F \colon \left({X, \varsigma}\right) \longrightarrow \left({Y, \vartheta}\right)\) is a function \(F \colon X \longrightarrow Y\) such that the following diagrams commute:
\[ 
\begin{minipage}[c]{0.3\linewidth}
\centering
\xymatrix{
X \ar[rd]_{\varsigma}   \ar[rr]^{F} &  & Y \ar[dl]^{\vartheta} \\
& \Cat{G}\sped{0}   }
\end{minipage}\hfill
\begin{minipage}[c]{0.2\linewidth}
\centering
\text{and} 
\end{minipage}\hfill
\begin{minipage}[c]{0.3\linewidth}
\centering
\xymatrix{
X \pdue{\times}{\varsigma}{\mathsf{t} } \Cat{G}\sped{1}   \ar[rr]^{ } \ar[d]_{F \times \Id_{\Cat{G}\sped{1} } } & & X  \ar[d]^{F}  \\
Y \pdue{\times}{\vartheta}{\mathsf{t} } \Cat{G}\sped{1}  \ar[rr]^{ } & & Y.
}
\end{minipage}
\]
\end{definition}

The resulting category, denoted by \(\Rset{G}\), has two monoidal structures: the disjoint union \(\Uplus\) and the fibre product, defined as follows.
Given \((X, \varsigma), (Y,\vartheta) \in \Rset{G}\), we set:
\[
(X, \varsigma)\, \underset{\Sscript{\Cat{G}\sped{0}}}{\times} \, (Y,\vartheta) \,=\,  \big( X \underset{\Sscript{\Cat{G}\sped{0}}}{\times} Y, \varsigma\vartheta\big),
\]
where \(X \fpro{\Cat{G}\sped{0}} Y = X \pdue{\times}{\varsigma}{\vartheta} Y\) and $\varsigma\vartheta \colon X \underset{\Sscript{\Cat{G}\sped{0}}}{\times} Y \longrightarrow \Cat{G}\sped{0}$ sends \((x,y)\) to \(\varsigma(x)=\vartheta(y)\).

A categorified \(\Cat{G}\)-set \(\Xset{\varsigma}\) is defined as an internal category in \(\Rset{G}\) with set of objects \(\left({\Xob, \varsigma\sped{0}}\right)\) and set of morphisms \(\left({\Xmo, \varsigma\sped{1}}\right)\).
Furthermore, in the monoidal structure that realize the multiplication of the Burnside rig, the cartesian product \(\times\) has to be replaced by the fibre product \(\fpro{\Cat{G}\sped{0}}\).

\begin{proposition}
The object
\[ \emptyset = \Big( \left({\emptyset, \emptyset}\right), \left({\emptyset, \emptyset}\right), \emptyset, \emptyset, \emptyset, \emptyset \Big) ,
\]
is initial in \(\CRset{\Cat{G}}\), \(\Uplus\) is a coproduct in \(\CRset{\Cat{G}}\) and \(\left({\CRset{\Cat{G}}, \Uplus, \emptyset}\right)\) is a strict monoidal category.
\end{proposition}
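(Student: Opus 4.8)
The plan is to transcribe, almost word for word, the argument given in Section~\ref{secMonStruc} for the category $\CRset{G}$ of categorified group-sets, replacing $\Rset{G}$ throughout by the category $\Rset{G}$ of groupoid-sets and carrying along the structure maps $\varsigma\colon X\to\cG_0$. Three things must be checked: that $\emptyset$ is initial, that $\cX\Uplus\cY$ (built with componentwise disjoint unions) is a coproduct, and that $(\CRset{\Cat{G}},\Uplus,\emptyset)$ is a \emph{strict} monoidal category.

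First I would record the two facts about $\Rset{G}$ that drive everything. Realising a $\cG$-set as a functor from the underlying category of $\cG$ to the core of $\Sets$, as in the discussion preceding this proposition, colimits in $\Rset{G}$ are computed pointwise; in particular $\Rset{G}$ has initial object $(\emptyset,\emptyset)$ and all binary coproducts, the latter given by $(X,\varsigma)\Uplus(Y,\vartheta)=(X\uplus Y,\varsigma\uplus\vartheta)$ with the evident action and inclusions, the unique mediating map on underlying sets being automatically $\cG$-equivariant. Initiality of $\emptyset$ in $\CRset{\Cat{G}}$ is then immediate: the only internal category in $\Rset{G}$ carried by the initial object is $\emptyset$ itself, and for any categorified $\cG$-set $\cX$ the pair of unique $\Rset{G}$-morphisms $(\emptyset,\emptyset)\to\cX_0$, $(\emptyset,\emptyset)\to\cX_1$ is an internal functor, since every diagram in Definition~\ref{dIntFun} commutes trivially (empty source), and it is clearly the only one.

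For the coproduct, I would set $(\cX\Uplus\cY)_i=\cX_i\Uplus\cY_i$ for $i=0,1$ and $\mathsf{s}_{\cX\uplus\cY}=\mathsf{s}_\cX\Uplus\mathsf{s}_\cY$, and likewise for $\mathsf{t},\iota,m$; these are $\cG$-equivariant by the coproduct property in $\Rset{G}$. The one step that is not a pure formality is the identification $(\cX\Uplus\cY)_2=\cX_2\Uplus\cY_2$: a composable pair of arrows must have matching source and target, and as $\cX_0$ and $\cY_0$ are disjoint inside $(\cX\Uplus\cY)_0$ no arrow of $\cX$ is composable with one of $\cY$; hence the pull-back defining $(\cX\Uplus\cY)_2$ splits as the disjoint union of the pull-backs defining $\cX_2$ and $\cY_2$ (here one uses that pull-back along the coproduct inclusions behaves well in $\Rset{G}$, again because everything is computed pointwise). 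Granting this, all the commutative diagrams of Definition~\ref{dIntCat} for $\cX\Uplus\cY$ reduce to the corresponding diagrams for $\cX$ and for $\cY$, the inclusions are internal functors for the same reason, and for internal functors $\varphi\colon\cX\to\cZ$, $\psi\colon\cY\to\cZ$ the coproduct maps $\varphi_i\Uplus\psi_i$ in $\Rset{G}$ assemble into the unique internal functor $\cX\Uplus\cY\to\cZ$ restricting to $\varphi$ and $\psi$ (its internal-functor axioms again being verified componentwise).

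Finally, strictness is free: disjoint union of sets is strictly unital and strictly associative, so $\emptyset\Uplus\cX=\cX=\cX\Uplus\emptyset$ and $(\cX\Uplus\cY)\Uplus\cZ=\cX\Uplus(\cY\Uplus\cZ)$ hold on the nose in each component and for every structure map; the unitors and the associator may therefore be taken to be identities, and the triangle and pentagon identities hold trivially. I expect the only mildly delicate point to be the decomposition of the pull-back $(\cX\Uplus\cY)_2$ — equivalently, the extensivity-type compatibility of fibre products with $\Uplus$ in $\Rset{G}$ — while the remainder is a direct transcription of the group-case computation in Section~\ref{secMonStruc}.
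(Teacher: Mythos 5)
Your proposal is correct and follows exactly the route the paper intends: the paper states this proposition without proof, treating it as evident by transcription of the explicit constructions already given for $\CRset{G}$ in Section~\ref{secMonStruc}, which is precisely what you do. Your observation that the only non-formal step is the decomposition $(\Cat{X}\Uplus\Cat{Y})\sped{2}=\Cat{X}\sped{2}\Uplus\Cat{Y}\sped{2}$ of the pull-back is accurate and is the one detail worth recording that the paper leaves implicit.
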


As in the group case, we have an ``inclusion'' functor from the category of usual  \(\Cat{G}\)-sets to the category of  categorified \(\Cat{G}\)-sets:
\begin{equation}\label{Eq:IcG} 
\begin{aligned}{\SInc{\Cat{G}} }  \colon & {\Rset{G} } \longrightarrow {\CRset{\Cat{G}} } \\ & {\left({X, \varsigma}\right)  }  \longrightarrow {\Big( \left({X, \varsigma}\right), \left({X, \varsigma}\right), \mathsf{s}\ped{X }, \mathsf{t}\ped{X }, \iota\ped{X }, m\ped{X } \Big) }\end{aligned} 
\end{equation}
where \(\mathsf{s}\ped{X} = \mathsf{t}\ped{X} = \iota\ped{X} = \id{X}\) and
\[ \begin{aligned}{m\ped{X}= \pr\sped{1}  }  \colon & {\left({X, \varsigma}\right)\sped{2} } \longrightarrow {\left({X, \varsigma}\right) } \\ & {(a,b)}  \longrightarrow {a}\end{aligned}
\]
with \(a = \id{X}(a) = \mathsf{s}\ped{\Cat{X} }(a)= \mathsf{t}\ped{X}(b)= b\).
The behaviour of \(\SInc{G}\) on morphisms is obvious.
Basically, the image of \(\SInc{G}\) is given by discrete categories.
Moreover, we will use the abuse of notation \(\Cat{G}\sped{0} = \SInc{G}\left({\Cat{G}\sped{0}}\right)\).

\begin{proposition}
We have \(\left({\CRset{\Cat{G}}, \fpro{\Cat{G}\sped{0} }, \Cat{G}\sped{0}}\right)\) is a monoidal category: the associator on \(\CRset{\Cat{G}}\)
\[ \left({ \left({ - \fpro{\Cat{G}\sped{0} } - }\right) \fpro{\Cat{G}\sped{0} } - }\right) \longrightarrow \left({ - \fpro{\Cat{G}\sped{0} } \left({- \fpro{ \Cat{G}\sped{0} } -}\right) }\right) 
\]
is the identity and the natural isomorphisms
\[ \Phi \colon \left({ \id{\CRset{\Cat{G}} }  \fpro{\Cat{G}\sped{0} }   \Cat{G}\sped{0} }\right) \longrightarrow \id{\CRset{\Cat{G}} } ,
\qquad 
\Psi \colon \left({ \Cat{G}\sped{0}   \fpro{\Cat{G}\sped{0} }  \id{\CRset{\Cat{G}} }   \longrightarrow \id{\CRset{\Cat{G}} } }\right)
\]
are defined as follows.
Let be \(\Xset{\varsigma}\): for \(i=0,1\) we set
\[ \begin{aligned}{\Phi\left({\Cat{X} }\right)\sped{i} }  \colon & {\Xmoi \fpro{\Cat{G}\sped{0} } \Cat{G}\sped{0} } \longrightarrow {\Xmoi} \\ & {(a,b)}  \longrightarrow {a}\end{aligned} 
\qquad \text{and} \qquad
\begin{aligned}{\Psi\left({\Cat{X} }\right)\sped{i} }  \colon & {\Cat{G}\sped{0} \fpro{\Cat{G}\sped{0} } \Xmoi  } \longrightarrow {\Xmoi} \\ & {(b,a)}  \longrightarrow {a}\end{aligned} 
\]
where \(\varsigma_i(a)=b\).
\end{proposition}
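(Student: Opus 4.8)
The plan is to follow, step by step, the construction carried out for the group case in Section~\ref{secMonStruc}, replacing everywhere the Cartesian product $\times$ by the fibre product $\fpro{\Cat{G}\sped{0}}$ over the object set $\Cat{G}\sped{0}$ of $\Cat{G}$, and replacing the monoidal unit by the discrete categorified $\Cat{G}$-set $\Cat{G}\sped{0} = \SInc{G}(\Cat{G}\sped{0})$ associated with the canonical $\Cat{G}$-set $\Cat{G}\sped{0}$.

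First I would make precise the bifunctor $- \fpro{\Cat{G}\sped{0}} -$. Given categorified $\Cat{G}$-sets $\Xset{\varsigma}$ and $\Yset{\vartheta}$, one sets $\big(\Cat{X} \fpro{\Cat{G}\sped{0}} \Cat{Y}\big)\sped{i} = \Xmoi \fpro{\Cat{G}\sped{0}} \Ymoi$ for $i = 0, 1$ (fibre product taken over $\varsigma\sped{i}$ and $\vartheta\sped{i}$), with source, target and identity morphisms the componentwise pairs $(\mathsf{s}\ped{\Cat{X}}, \mathsf{s}\ped{\Cat{Y}})$, $(\mathsf{t}\ped{\Cat{X}}, \mathsf{t}\ped{\Cat{Y}})$, $(\iota\ped{\Cat{X}}, \iota\ped{\Cat{Y}})$, and with composition defined componentwise on composable pairs exactly as in the Cartesian case. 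The point to check is that these structure maps take values in the correct fibre products and are $\Cat{G}$-equivariant; granting this, the diagrams of Definition~\ref{dIntCat} for $\Cat{X} \fpro{\Cat{G}\sped{0}} \Cat{Y}$ hold componentwise because they hold for $\Cat{X}$ and for $\Cat{Y}$. On arrows, $\varphi \fpro{\Cat{G}\sped{0}} \psi$ has components $\varphi\sped{i} \fpro{\Cat{G}\sped{0}} \psi\sped{i}$, and the internal-functor axioms of Definition~\ref{dIntFun} reduce componentwise likewise, so one obtains a functor $- \fpro{\Cat{G}\sped{0}} - \colon \CRset{\Cat{G}} \times \CRset{\Cat{G}} \longrightarrow \CRset{\Cat{G}}$.

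Next I would dispose of the coherence data. The associator can be taken to be the identity: at the level of sets $(X \fpro{\Cat{G}\sped{0}} Y) \fpro{\Cat{G}\sped{0}} Z$ and $X \fpro{\Cat{G}\sped{0}} (Y \fpro{\Cat{G}\sped{0}} Z)$ are, up to the canonical reassociation of compatible triples, the same object, all structure maps agree under this identification, and so the pentagon axiom is trivial. For the unit, I would verify that each $\Phi(\Cat{X})\sped{i}$ is a bijective $\Cat{G}$-equivariant map with inverse $a \mapsto (a, \varsigma\sped{i}(a))$, that the family $\big(\Phi(\Cat{X})\sped{i}\big)_{i}$ constitutes a morphism of categorified $\Cat{G}$-sets — this holds because the $\Cat{G}\sped{0}$-coordinate of a point of $\Xmoi \fpro{\Cat{G}\sped{0}} \Cat{G}\sped{0}$ is forced by its $\Cat{X}$-coordinate through $\varsigma\sped{i}$, so discarding it commutes with source, target, identity and multiplication — and that $\Phi$ is natural in $\Cat{X}$, the latter being the componentwise statement that first projections commute with $\alpha\sped{i} \fpro{\Cat{G}\sped{0}} \id{\Cat{G}\sped{0}}$. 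The treatment of $\Psi$ is symmetric, with inverse $a \mapsto (\varsigma\sped{i}(a), a)$, and the triangle identity relating $\Phi$, $\Psi$ and the identity associator is a one-line componentwise check on projections.

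The only step that does not simply transcribe the group-case computations of Section~\ref{secMonStruc}, and hence the one I expect to require care, is the verification that the structure maps of $\Cat{X} \fpro{\Cat{G}\sped{0}} \Cat{Y}$ are well defined. For instance, for $(f,g) \in \Xmo \fpro{\Cat{G}\sped{0}} \Ymo$ one must see that $(\mathsf{s}\ped{\Cat{X}}(f), \mathsf{s}\ped{\Cat{Y}}(g)) \in \Xob \fpro{\Cat{G}\sped{0}} \Yob$, i.e.\ $\varsigma\sped{0}(\mathsf{s}\ped{\Cat{X}}(f)) = \vartheta\sped{0}(\mathsf{s}\ped{\Cat{Y}}(g))$; this is where the hypothesis that $\Cat{X}$ and $\Cat{Y}$ are internal categories in $\Rset{G}$ enters, namely through the identities $\varsigma\sped{0}\,\mathsf{s}\ped{\Cat{X}} = \varsigma\sped{1} = \varsigma\sped{0}\,\mathsf{t}\ped{\Cat{X}}$ and $\varsigma\sped{1}\,\iota\ped{\Cat{X}} = \varsigma\sped{0}$ together with their $\Cat{Y}$-analogues, which reduce every such compatibility to the defining relation $\varsigma\sped{1}(f) = \vartheta\sped{1}(g)$. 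Once this bookkeeping is settled, all remaining verifications are routine and I would record them as direct checks.
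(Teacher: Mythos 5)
Your proposal is correct and follows exactly the route the paper intends: the paper states this proposition without a written proof, leaving it as the transcription of the group-case construction of Section~\ref{secMonStruc} with $\times$ replaced by $\fpro{\Cat{G}\sped{0}}$ and the unit $\one$ replaced by $\Cat{G}\sped{0}$. You correctly isolate the only genuinely new point — that the componentwise structure maps land in the right fibre products, which follows from the source, target, identity and composition maps being morphisms over $\Cat{G}\sped{0}$ — and the rest is the routine componentwise verification the paper takes for granted.
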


Now we have two monoidal structures \(\left({\CRset{\Cat{G}}, \Uplus, \emptyset}\right)\) and \(\left({\CRset{\Cat{G}}, \fpro{\Cat{G}\sped{0} }, \Cat{G}\sped{0}}\right)\) and to our aims, it is necessary to show the distributivity of \(\fpro{\Cat{G}\sped{0}}\) over \(\Uplus\).
To this end, let us consider \(\Xset{\varsigma}, \Yset{\vartheta}, \Aset{\omega} \in \CRset{\Cat{G}}\): we have to construct a morphism
\[ \lambda \colon \left[{ \Xset{\varsigma} \Uplus \Yset{\vartheta} }\right] \fpro{\Cat{G}\sped{0} } \Aset{\omega} \longrightarrow \left[{ \Xset{\varsigma}   \fpro{\Cat{G}\sped{0} } \Aset{\omega} }\right] \Uplus \left[{ \Yset{\vartheta}  \fpro{\Cat{G}\sped{0} } \Aset{\omega} }\right] 
\]
in \(\CRset{G}\).
We define it as the couple of morphisms in \(\Rset{G}\), for \(i=0,1\),
\[ \lambda\sped{i} \colon \left[{\left({\Xmoi, \varsigma\sped{i} }\right) \Uplus \left({\Ymoi, \vartheta\sped{i} }\right) }\right] \fpro{\Cat{G}\sped{0} } \left({\Amoi, \omega\sped{i} }\right) \longrightarrow \left[{\left({\Xmoi, \varsigma\sped{i} }\right)   \fpro{\Cat{G}\sped{0} } \left({\Amoi, \omega\sped{i} }\right) }\right] \Uplus \left[{ \left({\Ymoi, \vartheta\sped{i} }\right)  \fpro{\Cat{G}\sped{0} } \left({\Amoi, \omega\sped{i} }\right) }\right]
\]
that send \((a,b)\) to \((a,b)\) both if \(\varsigma\sped{i}(a)= \omega\sped{i}(b)\) with \(a \in \Xmoi\) and if \(\vartheta\sped{i}(a)= \omega\sped{i}(b)\) with \(a \in \Ymoi\).
The proof that \(\lambda\) is actually a morphism in \(\CRset{\Cat{G}}\) is now obvious. 
Following~\cite[Appendices]{KaoutitSpinosaBurn}, the category $\CRset{\cG}$ is actually a Laplaza category. In this direction, one can then apply the general construction of the Burnside rig and Grothendieck functor.

\begin{remark}
In general a groupoid \(\Cat{G}\) cannot be a right categorified \(\Cat{G}\)-set, once it is viewed  as a pair of $\cG$-sets via the  $\cG$-sets $\left(\Ga,\Sf{t} \right)$ and $\left( \Go, id_{\Sscript{0}}\right)$. If this were the case, then the  source map $ \Sf{s} \colon  \left( \Ga,\Sf{t} \right) \to \left( \Go, id_{\Sscript{0}}\right)$ should be $\cG$-equivariant, which in particular implies $\Sf{s}=\Sf{t}$. That is, $\cG$ should be a bundle of groups. Conversely, any group bundle with a fixed section leads to a groupoid  whose source is equal to its target and, thus, to a categorified set of that form.
Actually, a general groupoid \(\Cat{G}\) can be seen as a ``twisted '' and ``asymmetrical'' version of a right categorified \(\Cat{G}\)-set: this idea will be explored in a forthcoming paper by the same authors.
%
%
%

\end{remark}

The following results are adaptations of the corresponding results from~\cite{KaoutitSpinosaBurn} to the new situation of categorified sets.

\begin{proposition}
\label{pCatSimplRestric}
Given a groupoid \(\Cat{G}\), let \(\Cat{A}\) be a subgroupoid of \(\Cat{G}\).
We define a functor
\[ F \colon \CRset{\Cat{G}} \longrightarrow \CRset{\Cat{A} } 
\]
in the following way: let be \(\Xset{\varsigma} \in \CRset{\Cat{G} }\).
We define \(F\left({\Xset{\varsigma}}\right)\) as the internal category in \(\Rset{A}\) with set of objects \(\left({\varsigma\sped{0}^{-1}\left({\Cat{A}\sped{0} }\right), \left.{\varsigma\sped{0} }\right|_{\varsigma\sped{0}^{-1}\left({\Cat{A}\sped{0} }\right)}}\right)\) and set of morphisms \(\left({\varsigma\sped{1}^{-1}\left({\Cat{A}\sped{1} }\right), \left.{\varsigma\sped{1} }\right|_{\varsigma\sped{1}^{-1}\left({\Cat{A}\sped{1} }\right)}}\right)\).
The source, target, identity and composition maps of \(F\left({\Xset{\varsigma}}\right)\) are the opportune restriction to \(\varsigma\sped{0}^{-1}\left({\Cat{A}\sped{0} }\right)\) and \(\varsigma\sped{1}^{-1}\left({\Cat{A}\sped{1} }\right)\) of the relative maps of \(\Xset{\varsigma}\).
Then \(F\) is a strict monoidal functor with respect to both \(\Uplus\) and the fibre product.
\end{proposition}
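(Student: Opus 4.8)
The plan is to reduce the statement to two ingredients. The first is the classical restriction functor $R\colon\Rset{\Cat{G}}\to\Rset{\Cat{A}}$ on underlying groupoid-sets (see~\cite{KaoutitSpinosaBurn}), which sends a $\Cat{G}$-set $(X,\varsigma)$ to the $\Cat{A}$-set supported on $\varsigma^{-1}(\Cat{A}\sped{0})$, with structure map and action obtained from those of $(X,\varsigma)$ by restriction; this is well defined precisely because $\Cat{A}$ is a subgroupoid, so $\varsigma^{-1}(\Cat{A}\sped{0})$ is stable under $\Cat{A}\sped{1}$. The second is the elementary observation that if a diagram of $\Cat{G}$-sets commutes, then its restriction to the preimages of $\Cat{A}\sped{0}$ commutes as a diagram of $\Cat{A}$-sets. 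Granting these, $F$ is obtained by applying $R$ separately to the $\Cat{G}$-sets $(\Xob,\varsigma\sped{0})$ and $(\Xmo,\varsigma\sped{1})$ underlying a categorified $\Cat{G}$-set $\Xset{\varsigma}$, so that no genuinely new computation is needed; all the content lies in checking that the restricted structure morphisms are still defined and land where they should.

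First I would verify that $F$ is well defined on objects. The crucial point is that $\mathsf{s}\ped{\Cat{X}},\mathsf{t}\ped{\Cat{X}}\colon(\Xmo,\varsigma\sped{1})\to(\Xob,\varsigma\sped{0})$, $\iota\ped{\Cat{X}}\colon(\Xob,\varsigma\sped{0})\to(\Xmo,\varsigma\sped{1})$ and $m\ped{\Cat{X}}\colon(\Xmod,\varsigma\sped{2})\to(\Xmo,\varsigma\sped{1})$, being morphisms in $\Rset{\Cat{G}}$, are compatible with the structure maps, that is, $\varsigma\sped{0}\,\mathsf{s}\ped{\Cat{X}}=\varsigma\sped{1}=\varsigma\sped{0}\,\mathsf{t}\ped{\Cat{X}}$, $\varsigma\sped{1}\,\iota\ped{\Cat{X}}=\varsigma\sped{0}$ and $\varsigma\sped{1}\,m\ped{\Cat{X}}=\varsigma\sped{2}$, where $\varsigma\sped{2}$ is the structure map of $\Xmod$, sending a composable pair $(p,q)$ to $\varsigma\sped{1}(p)=\varsigma\sped{1}(q)$. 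From these identities one reads off that $\mathsf{s}\ped{\Cat{X}}$ and $\mathsf{t}\ped{\Cat{X}}$ carry $\varsigma\sped{1}^{-1}(\Cat{A}\sped{0})$ into $\varsigma\sped{0}^{-1}(\Cat{A}\sped{0})$, that $\iota\ped{\Cat{X}}$ carries $\varsigma\sped{0}^{-1}(\Cat{A}\sped{0})$ into $\varsigma\sped{1}^{-1}(\Cat{A}\sped{0})$, that $R(\Xmod)=R(\Xmo)\pdue{\times}{\mathsf{s}\ped{\Cat{X}}}{\mathsf{t}\ped{\Cat{X}}}R(\Xmo)$ (the pull-back defining $\Xmod$ being computed over $\Xob$ inside $\Rset{\Cat{G}}$), and that $m\ped{\Cat{X}}$ restricts to a map into $\varsigma\sped{1}^{-1}(\Cat{A}\sped{0})$. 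Hence the restricted source, target, identity and composition maps are morphisms in $\Rset{\Cat{A}}$, and the diagrams of Definition~\ref{dIntCat} for $F(\Xset{\varsigma})$ hold because they are restrictions of those for $\Xset{\varsigma}$. The same device handles functoriality: an internal functor $\varphi=(\varphi\sped{0},\varphi\sped{1})$ in $\Rset{\Cat{G}}$ has each $\varphi\sped{i}$ compatible with the structure maps, hence restricts to $R(\varphi\sped{i})$; the diagrams of Definition~\ref{dIntFun} restrict; and $F(\id{\Cat{X}})=\id{F(\Cat{X})}$, $F(\psi\varphi)=F(\psi)F(\varphi)$ are immediate. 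This is the step I expect to be the main obstacle, although once the compatibility of the structure maps is written out it is more bookkeeping than difficulty.

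Finally I would record strict monoidality. For $\Uplus$: $R$ sends $\emptyset$ to $\emptyset$ and commutes strictly with disjoint unions, since the preimage of a disjoint union is the disjoint union of the preimages and, by Section~\ref{secMonStruc}, the structure morphisms of $\Cat{X}\Uplus\Cat{Y}$ are levelwise the disjoint unions of those of $\Cat{X}$ and $\Cat{Y}$; hence $F(\emptyset)=\emptyset$ and $F(\Cat{X}\Uplus\Cat{Y})=F(\Cat{X})\Uplus F(\Cat{Y})$ on the nose, and likewise on morphisms. For the fibre product: the unit $\Cat{G}\sped{0}=\SInc{\Cat{G}}(\Cat{G}\sped{0})$ has structure map $\id{\Cat{G}\sped{0}}$, so $R(\Cat{G}\sped{0})=\Cat{A}\sped{0}$ and, the discrete internal structure being preserved under restriction, $F(\Cat{G}\sped{0})=\SInc{\Cat{A}}(\Cat{A}\sped{0})$, the unit of $\CRset{\Cat{A}}$; moreover, for $\Cat{X},\Cat{Y}\in\CRset{\Cat{G}}$ the preimage of $\Cat{A}\sped{0}$ under the structure map of $\Cat{X}\sped{i}\fpro{\Cat{G}\sped{0}}\Cat{Y}\sped{i}$ equals $\varsigma\sped{i}^{-1}(\Cat{A}\sped{0})\fpro{\Cat{A}\sped{0}}\vartheta\sped{i}^{-1}(\Cat{A}\sped{0})$, and the source, target, identity and composition morphisms of $\Cat{X}\fpro{\Cat{G}\sped{0}}\Cat{Y}$ act componentwise, so restricting componentwise gives $F(\Cat{X}\fpro{\Cat{G}\sped{0}}\Cat{Y})=F(\Cat{X})\fpro{\Cat{A}\sped{0}}F(\Cat{Y})$ strictly, and similarly on morphisms. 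Since the associator and the unit isomorphisms $\Phi,\Psi$ on both sides are the identity, respectively the obvious projections, $F$ intertwines them automatically, and therefore $F$ is a strict monoidal functor with respect to both $\Uplus$ and the fibre product.
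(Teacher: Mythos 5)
Your proposal is correct and follows exactly the route the paper intends: the paper's proof is a one-line reference to the analogous statement for ordinary groupoid-sets (\cite[Proposition~3.6]{KaoutitSpinosaBurn}), and your argument simply makes explicit the reduction to that levelwise restriction functor together with the routine checks that the structure morphisms, the internal-category diagrams, and both monoidal structures restrict strictly. Nothing is missing; you have just written out the details the paper delegates to the earlier reference.
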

\begin{proof}
It proceeds as the proof of~\cite[Proposition 3.6]{KaoutitSpinosaBurn}.
\end{proof}

Given a groupoid $\Cat{G}$ and a fixed object $x \in \Cat{G}\sped{0}$, we denote with \(\Cat{G}^{\Sscript{(x)}}\) the one object subgroupoid with isotropy group \(\Cat{G}^{\Sscript{x}}\). That is, viewed as a groupoid \(\Cat{G}^{\Sscript{(x)}}\) has $\{x\}$ as a  set of objects and \(\Cat{G}^{\Sscript{x}}=\{ g \in \Cat{G}_{\Sscript{1}}|\,\, {\sf{s}}(g)={\sf{t}}(g)=x\}\) as a set of arrows. A groupoid $\Cat{G}$ with only one connected component (i.e., for any pair of objects $x, y \in \Ga $, there is an arrow $g \in \Ga $ such that ${\sf{s}}(g)=x$ and  ${\sf{t}}(g)=y$), is called  \emph{a transitive groupoid}.

\begin{theorem}
\label{tEquivTransIsoGrp}
Given a transitive  and not empty groupoid \(\mathcal{G}\), let be \(a \in \mathcal{G}_{ \ssstyle{0} }\).
Then there is an equivalence of monoidal categories with respect to both \(\Uplus\) and the fibre product, that is, 
\[ 
\CRset{\Cat{G}} \simeq  \CRset{\Cat{G}^{\Sscript{(a)}}} .
\]
\end{theorem}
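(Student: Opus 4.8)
The strategy is to mimic the classical equivalence between $\Rset{G}$ and $\Rset{\cG^{(a)}}$ for a transitive groupoid (compare~\cite[Section~4]{KaoutitSpinosaBurn}) and to lift it to the level of internal categories. First I would recall, or set up, the classical equivalence at the level of ordinary groupoid-sets: since $\cG$ is transitive and nonempty, a choice of arrows $\theta_x \colon a \longrightarrow x$ in $\Ga$ for every $x \in \Go$ (with $\theta_a = \iota_a$) yields a pair of functors
\[
\Res \colon \Rset{G} \longrightarrow \Rset{\cG^{(a)}}, \qquad
\Ind \colon \Rset{\cG^{(a)}} \longrightarrow \Rset{G},
\]
where $\Res$ is the restriction functor of Proposition~\ref{pCatSimplRestric} applied to the subgroupoid $\cG^{(a)}$ (sending $(X,\varsigma)$ to the fibre $\varsigma^{-1}(a)$ with its $\cG^x$-action), and $\Ind$ reconstructs a $\cG$-set from a $\cG^x$-set $S$ by $\Ind(S) = S \times \Go$ with structure map the second projection and action transported along the $\theta_x$'s. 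These are quasi-inverse, monoidal with respect to both $\Uplus$ and $\fpro{\Go}$, and the natural isomorphisms $\Res\Ind \cong \id{}$ and $\Ind\Res \cong \id{}$ are the standard ones built from the $\theta_x$.

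Next I would promote $\Res$ and $\Ind$ to functors between the categories of internal categories. Since $\CRset{\cG}$ is, by Definition, the $2$-category of internal categories in $\Rset{G}$, and internal category / internal functor / internal natural transformation are defined by finite-limit diagrams (Definitions~\ref{dIntCat}--\ref{dIntNatTra}), any functor $\Rset{G} \to \Rset{\cG^{(a)}}$ that preserves pullbacks induces a functor on internal categories; likewise a natural transformation between two such functors induces a $2$-natural transformation. So the key verification is that both $\Res$ and $\Ind$ preserve the pullbacks of Definition~\ref{def:SiSets} (which, in $\Rset{G}$, are computed on underlying sets with the diagonal action). For $\Res$ this is immediate, because restriction to a $\varsigma$-fibre is itself a fibre-product construction and commutes with other fibre products. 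For $\Ind$, the functor $S \mapsto S \times \Go$ visibly commutes with fibre products since $-\times \Go$ does and the structure maps are projections. Applying the general machinery, we obtain
\[
\Res \colon \CRset{\cG} \longrightarrow \CRset{\cG^{(a)}}, \qquad
\Ind \colon \CRset{\cG^{(a)}} \longrightarrow \CRset{\cG}
\]
with $\Res(\Xset{\varsigma}) = F(\Xset{\varsigma})$ of Proposition~\ref{pCatSimplRestric}, and with the $2$-isomorphisms $\Res\Ind \cong \id{\CRset{\cG^{(a)}}}$, $\Ind\Res \cong \id{\CRset{\cG}}$ obtained by applying the $2$-functoriality to the classical natural isomorphisms.

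Finally I would check monoidality: $\Res$ is strict monoidal for $\Uplus$ and for $\fpro{\Go}$ by Proposition~\ref{pCatSimplRestric}, and $\Ind$ is (strong) monoidal for both structures because $-\times\Go$ distributes over $\Uplus$ and converts the $\cG^x$-fibre product into the $\cG$-fibre product over $\Go$; these isomorphisms are inherited componentwise ($i=0,1$) from the group(oid)-set level and automatically satisfy the coherence conditions since they did so there. Putting these together gives the asserted equivalence of monoidal categories $\CRset{\cG} \simeq \CRset{\cG^{(a)}}$ with respect to both $\Uplus$ and the fibre product. The main obstacle I anticipate is purely bookkeeping: one must be careful that $\cG^{(a)}$ is treated as a genuine (one-object) groupoid so that $\Rset{\cG^{(a)}}$ means ``sets with a $\cG^x$-action'' and its fibre product over $\cG^{(a)}_0 = \{a\}$ is the ordinary product — then transporting the structure maps $\sf{s}_{\cX}, \sf{t}_{\cX}, \iota_{\cX}, m_{\cX}$ through $\Res$ and $\Ind$ and verifying they remain equivariant and that the internal-category axioms of Definition~\ref{dIntCat} survive is a routine but lengthy diagram chase, which I would only sketch, pointing to the corresponding verifications in~\cite{KaoutitSpinosaBurn}.
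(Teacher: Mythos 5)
Your proposal is correct and follows essentially the same route as the paper: restriction to the fibre over $a$ (Proposition~\ref{pCatSimplRestric}) in one direction, and $-\times\Cat{G}\sped{0}$ with second-projection structure map and action transported along a chosen family of arrows $a\to x$ in the other, these being the internal-category liftings of the classical equivalence $\Rset{G}\simeq\Rset{\Cat{G}^{\Sscript{(a)}}}$ from~\cite[Theorem~3.9]{KaoutitSpinosaBurn}. Your extra observation that pullback-preserving functors and natural isomorphisms between them induce the corresponding data on internal categories is a clean way to package the componentwise verifications that the paper leaves implicit.
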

\begin{proof}
It is in the same lines as that of  \cite[Theorem 3.9]{KaoutitSpinosaBurn}.
Let's set \(\Cat{A}\sped{0}= \Set{a}\). In one direction, we 
the functor \(F \colon \CRset{\Cat{G} } \longrightarrow \CRset{\Cat{A}}\), the one constructed in Proposition~\(\ref{pCatSimplRestric}\). In the other direction, the functor \(G \colon \CRset{\cA} \longrightarrow \CRset{\cG}\) is defined as follows:   Given an object $(\cX,\varsigma) $ in $\CRset{\cA}$, we set \(G\left({\Xset{\varsigma}}\right)\) as the internal category in \(\Rset{G}\) with set of objects
\[ \Big( \Yob = \Xob \times \Cat{G}\sped{0}, \, \vartheta\sped{0} = \pr\sped{2} \colon \Yob = \Xob \times \Cat{G}\sped{0} \longrightarrow \Cat{G}\sped{0}   \Big) 
\]
and object of morphisms
\[ \Big( \Ymo = \Xmo \times \Cat{G}\sped{0}, \, \vartheta\sped{1} = \pr\sped{2} \colon \Ymo = \Xmo \times \Cat{G}\sped{0} \longrightarrow \Cat{G}\sped{0}   \Big) .
\]
The source, target, identity and composition maps of \(G\left({\Xset{\varsigma}}\right)\) are defined as \(\mathsf{s}\ped{G\left({\Xset{\varsigma}}\right) } = \mathsf{s}\ped{\Xset{\varsigma} } \times \Cat{G}\sped{0}\), \(\mathsf{t}\ped{G\left({\Xset{\varsigma}}\right) } = \mathsf{t}\ped{\Xset{\varsigma} } \times \Cat{G}\sped{0}\), \(\iota\ped{G\left({\Xset{\varsigma}}\right) } = \iota\ped{\Xset{\varsigma} } \times \Cat{G}\sped{0}\) and \(m\ped{G\left({\Xset{\varsigma}}\right) }\Big((x,a), (y,b) \Big) = \Big( m\ped{\Xset{\varsigma} }(x,y), a\Big)\) for each \(\left({(x,a), (y,b)}\right) \in \Xmod\).
The functors $F$ and $G$ are mutualy inverse and induces the stated equivalence of symmetric monoidal categories. 
\end{proof}

\begin{proposition}
\label{pBurnRigCoprod}
The Burnside rig functor \(\siL\) sends coproduct to product.
In particular, given a family of groupoids  \(\left(\Cat{G}_j \right)_{j \, \in \,  I}\), let \(\left(i_j \colon \Cat{G}_j  \longrightarrow  \Cat{G}\right)_{j \, \in \,  I }\) be their coproduct in the category \(\Grps\) of groupoids.
Then
\[ \left(\siL\left({i_j}\right) \colon \siL\left({\Cat{G}}\right)  \longrightarrow \siL\left({\Cat{G}_j}\right) \right)_{j \in I} 
\]
is the product of the family \(\left(\siL\left({ \Cat{G}_j}\right)\right)_{j \, \in \,  I}\) in the category \(\riG\).
\end{proposition}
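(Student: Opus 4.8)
The plan is to reduce everything to the fact that a categorified $\Cat{G}$-set over a coproduct groupoid splits canonically along the summands, and then to transport this splitting through the Burnside rig construction. Write $\Cat{G}=\coprod_{j\in I}\Cat{G}_j$, so that $\Cat{G}_{0}=\biguplus_{j\in I}(\Cat{G}_j)_{0}$ and $\Cat{G}_{1}=\biguplus_{j\in I}(\Cat{G}_j)_{1}$, with each inclusion $i_j$ realizing $\Cat{G}_j$ as a clopen full subgroupoid of $\Cat{G}$ carrying no arrows to or from the other summands. Given $\Xset{\varsigma}\in\CRset{\Cat{G}}$, the preimages $\varsigma_{0}^{-1}\big((\Cat{G}_j)_{0}\big)$ and $\varsigma_{1}^{-1}\big((\Cat{G}_j)_{1}\big)$ partition $\Xob$ and $\Xmo$, and, since $\Cat{G}$ has no arrows between distinct summands, these pieces are stable under $\mathsf{s}\ped{\Cat{X}}$, $\mathsf{t}\ped{\Cat{X}}$, $\iota\ped{\Cat{X}}$, $m\ped{\Cat{X}}$ and under the $\Cat{G}$-action. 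This produces a decomposition $\Cat{X}=\biguplus_{j\in I}\Cat{X}^{(j)}$ in $\CRset{\Cat{G}}$, where $\Cat{X}^{(j)}$ is the full categorified $\Cat{G}$-subset on $\varsigma_{0}^{-1}\big((\Cat{G}_j)_{0}\big)$; equivalently $\Cat{X}^{(j)}=E_j F_j(\Cat{X})$, where $F_j\colon\CRset{\Cat{G}}\to\CRset{\Cat{G}_j}$ is the restriction functor of Proposition~\ref{pCatSimplRestric}, which induces $\siL(i_j)$, and $E_j\colon\CRset{\Cat{G}_j}\to\CRset{\Cat{G}}$ is the evident extension functor declaring a categorified $\Cat{G}_j$-set to be empty over every other summand. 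One checks directly that $F_k E_j$ equals the identity for $k=j$ and the empty object for $k\neq j$, and that $\biguplus_{j}E_jF_j(\Cat{X})\cong\Cat{X}$ naturally; hence $(F_j)_{j\in I}\colon\CRset{\Cat{G}}\longrightarrow\prod_{j\in I}\CRset{\Cat{G}_j}$ is an isomorphism of categories, and it is strictly monoidal for both $\Uplus$ and $\fpro{\Cat{G}_{0}}$ because each $F_j$ is so by Proposition~\ref{pCatSimplRestric}.

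Next I would pass to finite objects and to weak equivalence. A finite categorified $\Cat{G}$-set has $\varsigma_{1}$ landing in finitely many summands, so its decomposition above is finitely supported; moreover $\Uplus$, $\fpro{\Cat{G}_{0}}$, the formation of skeletons and the relation $\wea$ are all compatible with this splitting (here I would invoke the groupoid analogues of Proposition~\ref{prop:PusTime} and Proposition~\ref{cWeakEqIsom}). Consequently
\[ \siL(\Cat{G})\longrightarrow\prod_{j\in I}\siL(\Cat{G}_j),\qquad [\Cat{X}]\longmapsto\big([F_j(\Cat{X})]\big)_{j\in I}, \]
is a well-defined bijection, with inverse $\big([\Cat{X}_j]\big)_{j}\mapsto\big[\biguplus_{j}E_j(\Cat{X}_j)\big]$. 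It is additive since each $F_j$ preserves $\Uplus$ and the empty object, and multiplicative since each $F_j$ preserves $\fpro{\Cat{G}_{0}}$ and carries the multiplicative unit $\SInc{\Cat{G}}(\Cat{G}_{0})=\biguplus_{k}\SInc{\Cat{G}_{k}}\big((\Cat{G}_k)_{0}\big)$ (Equation~\eqref{Eq:IcG}) to $\SInc{\Cat{G}_{j}}\big((\Cat{G}_j)_{0}\big)$, i.e.\ to the unit of $\siL(\Cat{G}_j)$. Since each $\siL(i_j)$ is a rig homomorphism and equals the $j$-th component of this bijection, the family $\big(\siL(i_j)\big)_{j\in I}$ is precisely the universal cone exhibiting $\siL(\Cat{G})$ as the product $\prod_{j}\siL(\Cat{G}_j)$ in $\riG$. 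Specializing to the empty family gives that $\siL(\emptyset)$ is the terminal (zero) rig, and to binary, hence finite, coproducts it gives the first assertion; so $\siL$ sends coproducts to products.

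I expect the only genuine obstacle to be the interaction with infinite index sets, the remaining verifications---stability of the splitting under all the structure maps and the action, its persistence through skeletons and $\wea$, and the identification of $\siL(i_j)$ with $F_j$---being routine given Propositions~\ref{pCatSimplRestric}, \ref{prop:PusTime} and~\ref{cWeakEqIsom} and their groupoid analogues. Concretely, $\siL(\Cat{G})$ only sees finitely supported tuples, so the comparison map lands in the restricted product $\bigoplus_{j}\siL(\Cat{G}_j)$, and this coincides with the categorical product in $\riG$ exactly when $I$ is finite (or when all but finitely many $\Cat{G}_j$ are empty). Hence the statement is to be read for finite families, just as the corresponding result in~\cite{KaoutitSpinosaBurn}; in that case it is immediate from the displayed bijection.
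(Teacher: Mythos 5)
Your argument is correct and follows the route the paper intends: the paper's own proof is only a pointer to \cite{KaoutitSpinosaBurn}, and the substance there is exactly your canonical splitting of a categorified $\cG$-set along the clopen summands of the coproduct, implemented by the restriction functors of Proposition~\ref{pCatSimplRestric} together with the extension-by-$\emptyset$ functors, followed by passage to $\wea$-classes; all the verifications you label routine (stability of the preimages $\varsigma_i^{-1}((\cG_j)_i)$ under the structure maps and the action, compatibility with $\Uplus$, the fibre product, skeletons and $\wea$) are indeed routine. The one point worth dwelling on is your closing caveat, and you are right about it: since finiteness of $\cX$ forces the tuple $\big([F_j(\cX)]\big)_{j}$ to be finitely supported, the comparison map $\siL(\cG)\to\prod_{j}\siL(\cG_j)$ is injective but not surjective when infinitely many $\cG_j$ are nonempty, so the universal property of the product in $\riG$ genuinely fails and the statement must be read for finite families (or families with almost all $\cG_j$ empty). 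This is not merely cosmetic: Theorem~\ref{tBurnProdComp} invokes the proposition with $I=\rep(\cG_{0})$, which the paper does not assume finite, so the decomposition there likewise requires finitely many connected components --- consistent with the ``appropriate finiteness conditions'' alluded to in the introduction, but not made explicit in the statement you were asked to prove.
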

\begin{proof}
The idea of the proof is similar to \cite[Prop.~6.3]{KaoutitSpinosaBurn}: the only difference to keep in mind is that we are dealing with particular small categories and not sets.
\end{proof}

\begin{theorem}
\label{tEquivCatIsomBurn}
Let be \(\Cat{G}\) and \(\Cat{A}\) be groupoids such that there is a symmetric strong monoidal  equivalence of categories \(\CRset{\Cat{G}} \simeq \CRset{\Cat{A}}\) with respect to both \(\Uplus\) and the fibre product.
Then there is an isomorphism of commutative rings \(\siB\left({\mathcal{G} }\right) \cong \siB\left({\mathcal{A} }\right)\).
\end{theorem}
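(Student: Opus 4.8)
The plan is to transport the hypothesised monoidal equivalence down to the level of Burnside rigs and then apply the Grothendieck functor. Write $\Phi\colon \CRset{\Cat{G}}\longrightarrow \CRset{\Cat{A}}$ for the given symmetric strong monoidal equivalence (with respect to both $\Uplus$ and the fibre product) and fix a quasi-inverse $\Psi$, together with natural isomorphisms $\Phi\Psi\cong \id{\CRset{\Cat{A}}}$ and $\Psi\Phi\cong \id{\CRset{\Cat{G}}}$; being a quasi-inverse of a strong monoidal equivalence, $\Psi$ is canonically symmetric strong monoidal for both structures as well. Everything then reduces to producing a rig isomorphism $\siL(\Cat{G})\cong \siL(\Cat{A})$ compatible with the additive structure carried by $\Uplus$ and with the multiplicative one carried by the fibre product; once this is in hand, applying the functor $\gG$ gives $\siB(\Cat{G})=\gG\siL(\Cat{G})\cong \gG\siL(\Cat{A})=\siB(\Cat{A})$, and this last passage is purely formal --- in particular it does not need the cancellativity of the additive monoids, which is not available here (cf.\ Remark~\ref{rCancProp}).

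First I would check that $\Phi$ descends to $\wea$-classes, i.e.\ that $\Cat{X}\wea\Cat{Y}$ implies $\Phi(\Cat{X})\wea\Phi(\Cat{Y})$. The device for this is that any fully faithful and essentially surjective functor commutes with the skeleton construction up to isomorphism: $\Phi$ sends the inclusion $\Ske(\Cat{X})\hookrightarrow\Cat{X}$ to a full, isomorphism-dense embedding, and since $\Phi$ reflects and preserves isomorphisms its image still has no two distinct isomorphic objects, so by the uniqueness part of Proposition~\ref{cWeakEqIsom} one gets $\Ske(\Phi(\Cat{X}))\cong\Phi(\Ske(\Cat{X}))$; feeding this into the characterisation $\Cat{X}\wea\Cat{Y}\iff\Ske(\Cat{X})\cong\Ske(\Cat{Y})$ of the same proposition yields the claim. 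Next I would verify that $\Phi$ (and likewise $\Psi$) restricts to an equivalence between the full subcategories $\fCRset{\Cat{G}}$ and $\fCRset{\Cat{A}}$ of finite objects, exactly along the lines of the group case in~\cite{KaoutitSpinosaBurn}. Together these two points make $[\Cat{X}]\mapsto[\Phi(\Cat{X})]$ a well-defined map $\siL(\Phi)\colon\siL(\Cat{G})\to\siL(\Cat{A})$, with two-sided inverse induced by $\Psi$ (using $\Psi\Phi\cong\id{\CRset{\Cat{G}}}$ and $\Phi\Psi\cong\id{\CRset{\Cat{A}}}$), hence a bijection; and strong monoidality gives $\Phi(\Cat{X}\Uplus\Cat{Y})\cong\Phi(\Cat{X})\Uplus\Phi(\Cat{Y})$, $\Phi(\emptyset)\cong\emptyset$, $\Phi(\Cat{X}\fpro{\Cat{G}\sped{0}}\Cat{Y})\cong\Phi(\Cat{X})\fpro{\Cat{A}\sped{0}}\Phi(\Cat{Y})$ and $\Phi(\Cat{G}\sped{0})\cong\Cat{A}\sped{0}$, so $\siL(\Phi)$ is an isomorphism of rigs.

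The hard part will be the step showing that $\Phi$ preserves $\wea$: since $\wea$ is a genuinely $2$-categorical relation (it is phrased through internal natural isomorphisms) while only a $1$-categorical equivalence is assumed, one cannot simply ``apply $\Phi$ to the $2$-cells'', and the detour through Proposition~\ref{cWeakEqIsom} and the commutation of $\Phi$ with $\Ske$ is what makes the argument go through. A subsidiary technical point to be careful about is the restriction of the equivalence to finite objects, which for a general groupoid rests on the standing finiteness hypotheses adopted in~\cite{KaoutitSpinosaBurn}; beyond these, the monoidality bookkeeping and the final application of $\gG$ are routine.
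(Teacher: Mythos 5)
Your overall strategy coincides with the paper's, which simply transports the argument of \cite[Thm.~6.5]{KaoutitSpinosaBurn} from ordinary groupoid-sets to categorified ones: show that the monoidal equivalence descends to a rig isomorphism \(\siL(\Cat{G})\cong\siL(\Cat{A})\) and then apply the Grothendieck functor \(\gG\) (and you are right that no cancellativity is needed for that last, purely functorial, step). You have also correctly isolated the one point at which the categorified setting genuinely differs from the classical one: the classes in \(\siL\) are taken with respect to \(\wea\) rather than isomorphism, so one must check that the \(1\)-categorical equivalence \(\Phi\colon\CRset{\Cat{G}}\to\CRset{\Cat{A}}\) preserves \(\wea\).

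However, the argument you offer for that step has a genuine gap. You claim that \(\Phi\) carries the inclusion \(\Ske(\Cat{X})\hookrightarrow\Cat{X}\) to a full, isomorphism-dense, skeletal categorified \(\Cat{A}\)-subset of \(\Phi(\Cat{X})\), ``since \(\Phi\) reflects and preserves isomorphisms''. But the isomorphisms that \(\Phi\) preserves and reflects are the invertible \emph{morphisms of the ambient category} \(\CRset{\Cat{G}}\), whereas fullness, isomorphism-density and skeletality of \(\Ske(\Cat{X})\subseteq\Cat{X}\) are statements about the \emph{internal} structure of the single object \(\Cat{X}\): about the hom-sets \(\Cat{X}(a,b)\) and the invertible arrows in \(\Xmo\). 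An equivalence of the ambient \(1\)-categories gives no control over how \(\Phi\) transforms this internal data, so the isomorphism \(\Ske(\Phi(\Cat{X}))\cong\Phi(\Ske(\Cat{X}))\) --- and with it, via Proposition~\ref{cWeakEqIsom}, the implication \(\Cat{X}\wea\Cat{Y}\Rightarrow\Phi(\Cat{X})\wea\Phi(\Cat{Y})\) --- does not follow from what you have written. To close the gap one must either strengthen the hypothesis to an equivalence compatible with the \(2\)-cells (which is what happens in the paper's only application of this theorem, where the functors \(F\) and \(G\) of Theorem~\ref{tEquivTransIsoGrp} are explicit and mutually inverse, hence trivially compatible with the internal structure), or show that the \(2\)-cells of \(\CRset{\Cat{G}}\) are encoded \(1\)-categorically, e.g.\ as internal functors out of \(\Cat{X}\fpro{\Cat{G}\sped{0}}\mathbb{J}\) for a ``walking isomorphism'' object \(\mathbb{J}\), and that \(\Phi\) carries \(\mathbb{J}\) to the corresponding object over \(\Cat{A}\); neither is automatic. (The subsidiary point that \(\Phi\) restricts to finite objects also deserves an argument, but it is of the same routine nature as in \cite{KaoutitSpinosaBurn}.)
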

\begin{proof}
It proceeds as in~\cite[Thm.~6.5]{KaoutitSpinosaBurn}.
\end{proof}

\begin{theorem}
\label{tBurnProdComp}
Given a groupoid \(\Cat{G}\), fix a set of representative objects $\rep(\Cat{G}\sped{0})$ representing  the set of connected components $\pi_{\Sscript{0}}(\Cat{G})$.
For each \(a \, \in \, \rep(\Cat{G}\sped{0})  \), let  \(\Cat{G}^{\Braket{a}}\) be the connected component of \(\Cat{G}\) containing $a$, which we consider as a groupoid.
Then we have the following isomorphism of rings:
\[ 
\siB\left({\mathcal{G}}\right) \cong \prod_{a \, \in \, \rep(\Cat{G}\sped{0}) } \siB\left({ \Cat{G}^{\Braket{a}}  }\right).
\]
\end{theorem}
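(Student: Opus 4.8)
The plan is to reduce the statement to Proposition~\ref{pBurnRigCoprod} via the canonical decomposition of $\Cat{G}$ into connected components, and then to transport the resulting rig isomorphism through the Grothendieck functor $\gG$.

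First I would record that, in the category $\Grps$, the groupoid $\Cat{G}$ is the coproduct of its connected components: with the obvious inclusions $i\sped{a} \colon \Cat{G}^{\Braket{a}} \longrightarrow \Cat{G}$, one has $\Cat{G} = \coprod_{a \, \in \, \rep(\Cat{G}\sped{0})} \Cat{G}^{\Braket{a}}$, where each $\Cat{G}^{\Braket{a}}$ is a transitive non-empty groupoid and hence a legitimate object for $\siL$ and $\siB$. Proposition~\ref{pBurnRigCoprod}, applied to this coproduct, then says that $\siL(\Cat{G})$, together with the morphisms $\siL(i\sped{a})$, is the product of the family $\big(\siL(\Cat{G}^{\Braket{a}})\big)_{a}$ in $\riG$; in particular there is an isomorphism of rigs
\[
\siL(\Cat{G}) \;\cong\; \prod_{a \, \in \, \rep(\Cat{G}\sped{0})} \siL\big( \Cat{G}^{\Braket{a}} \big).
\]

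Next I would apply $\gG$ to this isomorphism. Since $\siB = \gG\,\siL$, this gives $\siB(\Cat{G}) \cong \gG\big( \prod_{a} \siL(\Cat{G}^{\Braket{a}}) \big)$, and the proof is finished once one knows that $\gG$ commutes with the product in question, i.e. that there is a ring isomorphism $\gG\big(\prod_{a} R\sped{a}\big) \cong \prod_{a} \gG(R\sped{a})$. This is the point that actually needs an argument rather than a formal invocation, since $\gG$ is a left adjoint and so a priori preserves only colimits: a finite product of commutative monoids is at the same time their coproduct (a biproduct), so the underlying additive functor of $\gG$ does carry it to the product of the abelian groups $\gG(R\sped{a})$, and the ring structure $\gG$ places on $\gG(\prod_a R\sped{a})$, being induced from the componentwise multiplication, is precisely the product ring structure on $\prod_a \gG(R\sped{a})$. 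This is contained in \cite[Appendix~A.3]{KaoutitSpinosaBurn}, and in any case follows by a short direct verification from the explicit model of $\gG$. Chaining the two isomorphisms yields $\siB(\Cat{G}) \cong \prod_{a \, \in \, \rep(\Cat{G}\sped{0})} \siB(\Cat{G}^{\Braket{a}})$.

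The main obstacle, then, is the product-preservation of $\gG$ just discussed; the remaining subtlety is the finiteness implicit in the definition of $\siL$, for which one works — exactly as in the classical counterpart in \cite{KaoutitSpinosaBurn} — under the standing hypothesis that $\pi_{\Sscript{0}}(\Cat{G})$ is finite, so that the product above is finite and the monoidal unit $\Cat{G}\sped{0}$ of the fibre-product structure is still a finite categorified set. Everything else is a formal consequence of Proposition~\ref{pBurnRigCoprod}; note moreover that combining the result with Theorems~\ref{tEquivTransIsoGrp} and~\ref{tEquivCatIsomBurn} lets one replace each $\Cat{G}^{\Braket{a}}$ by the isotropy group at $a$, recovering the decomposition into isotropy group types announced in the introduction.
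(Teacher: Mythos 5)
Your proof is correct and follows essentially the same route as the paper, whose own proof simply declares the theorem a direct consequence of Proposition~\ref{pBurnRigCoprod} applied to the coproduct decomposition of $\Cat{G}$ into its connected components, followed by an application of $\gG$. The one substantive point you elaborate --- that $\gG$ carries the product of rigs to the product of rings --- is right, and in fact holds for arbitrary (not just finite) products, since every element of the Grothendieck construction is a formal difference of elements of the underlying rig; so the standing finiteness hypothesis on $\pi_{\Sscript{0}}(\Cat{G})$ is not needed for that step.
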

\begin{proof}
It follows directly from Proposition~\(\ref{pBurnRigCoprod}\).
\end{proof}

\begin{corollary}
Given a groupoid \(\mathcal{G}\), we have the following, non canonical,  isomorphism of rings:
\[ \siB\left({\mathcal{G}}\right) \cong \prod_{a \, \in \, \rep(\Cat{G}\sped{0})} \siB\left({\mathcal{G}^{ \ssstyle{a} }  }\right),
\]
where the right hand side term is the product of commutative rings. 
\end{corollary}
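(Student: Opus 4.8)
The plan is to assemble the statement from three results already in hand: Theorem~\ref{tBurnProdComp}, Theorem~\ref{tEquivTransIsoGrp} and Theorem~\ref{tEquivCatIsomBurn}. First I would apply Theorem~\ref{tBurnProdComp} to reduce to the connected (transitive) case, obtaining the ring isomorphism
\[ \siB(\Cat{G}) \;\cong\; \prod_{a\,\in\,\rep(\Cat{G}\sped{0})} \siB\bigl(\Cat{G}^{\Braket{a}}\bigr), \]
where $\Cat{G}^{\Braket{a}}$ is the connected component of $\Cat{G}$ through $a$, viewed as a groupoid. Since each $\Cat{G}^{\Braket{a}}$ is by construction transitive and non-empty, I can then invoke Theorem~\ref{tEquivTransIsoGrp} with base point $a$, which furnishes an equivalence of monoidal categories with respect to both $\Uplus$ and the fibre product,
\[ \CRset{\Cat{G}^{\Braket{a}}} \;\simeq\; \CRset{(\Cat{G}^{\Braket{a}})^{\Sscript{(a)}}}, \]
and $(\Cat{G}^{\Braket{a}})^{\Sscript{(a)}}$ is precisely the one-object subgroupoid $\Cat{G}^{\Sscript{(a)}}$ with isotropy group $\Cat{G}^{\Sscript{a}}$.

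Next I would feed this equivalence into Theorem~\ref{tEquivCatIsomBurn}: a symmetric strong monoidal equivalence $\CRset{\Cat{G}^{\Braket{a}}}\simeq\CRset{\Cat{G}^{\Sscript{(a)}}}$ yields an isomorphism of commutative rings $\siB(\Cat{G}^{\Braket{a}})\cong\siB(\Cat{G}^{\Sscript{(a)}})$. Finally I would identify $\siB(\Cat{G}^{\Sscript{(a)}})$ with $\siB(\Cat{G}^{\Sscript{a}})$: a one-object groupoid is the same datum as a group, and this identification matches categorified groupoid-sets with categorified group-sets, disjoint union with disjoint union, and the fibre product over the singleton object set with the cartesian product used in the group case; hence the categorified Burnside rigs, and so the rings obtained after applying the Grothendieck functor $\gG$, coincide. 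Chaining the three isomorphisms inside the product over $a\in\rep(\Cat{G}\sped{0})$ gives the asserted $\siB(\Cat{G})\cong\prod_{a}\siB(\Cat{G}^{\Sscript{a}})$.

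The non-canonicity is built in at the second step, since the equivalence of Theorem~\ref{tEquivTransIsoGrp} depends on the choice of base object $a$ in each connected component (and, implicitly, on a choice of connecting arrows), so no natural identification is being claimed. I do not expect a genuine obstacle here: the only point deserving a word of care is checking that the equivalence produced by Theorem~\ref{tEquivTransIsoGrp} is a symmetric strong monoidal equivalence for \emph{both} monoidal structures, so that Theorem~\ref{tEquivCatIsomBurn} applies, and that the passage from $\Cat{G}^{\Sscript{(a)}}$ to $\Cat{G}^{\Sscript{a}}$ respects the fibre-product structure; both are immediate from the explicit descriptions of the functors $F$ and $G$ in Proposition~\ref{pCatSimplRestric} and Theorem~\ref{tEquivTransIsoGrp}.
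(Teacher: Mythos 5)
Your proposal is correct and follows exactly the route the paper intends: the paper's proof is simply ``Immediate from Theorem~\ref{tBurnProdComp}, Theorem~\ref{tEquivCatIsomBurn}, and Theorem~\ref{tEquivTransIsoGrp},'' and you have chained these three results in the same order, with the same reduction to connected components, the same passage to the one-object subgroupoid, and the same identification of a one-object groupoid with its isotropy group. The extra care you take about the monoidal compatibility and the source of the non-canonicity is a faithful elaboration of what the paper leaves implicit.
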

\begin{proof}
Immediate from Theorem~\(\ref{tBurnProdComp}\), Theorem~\(\ref{tEquivCatIsomBurn}\), and Theorem~\(\ref{tEquivTransIsoGrp}\).
\end{proof}

Lastly, as in the case of groups, we have a non monomorphism of commutative rings  from the classical Burnside ring of $\cG$ and its categorified Burnside ring. 
More precisely, we have a commutative diagram of rings:
$$
\xymatrix{ \bB(\cG) \ar@{->}^-{}[rr]   \ar@{->}_-{\cong}[d]  & & \siB(\cG)  \ar@{->}_-{\cong}[d]  \\   \underset{a \, \in \, \rep(\Cat{G}\sped{0})}{\prod} \bB\left({\mathcal{G}^{ \ssstyle{a} }  }\right)  \ar@{->}^-{}[rr] & &   \underset{a \, \in \, \rep(\Cat{G}\sped{0})}{\prod} \siB\left({\mathcal{G}^{ \ssstyle{a} }  }\right) }
$$
where the vertical morphism of rings, are deduced by using the functor of equation \eqref{eIncFun} and \eqref{Eq:IcG}.

\printbibliography

\end{document}